\newtheorem{theorem}{Theorem}[section]
\newtheorem{proposition}[theorem]{Proposition}
\newtheorem{corollary}[theorem]{Corollary}
\newtheorem{lemma}[theorem]{Lemma}
\newtheorem{definition}[theorem]{Definition}
\theoremstyle{remark}
\newtheorem{remark}[theorem]{Remark}
\newcommand{\la}{\langle}
\newcommand{\ra}{\rangle}
\def\R{{\mathbb{R}}}
\def\H{{\vec{H}}}
\def\a{a}
\def\b{b}
\DeclareMathOperator{\tr}{tr}
\title{\bf On Lorentzian surfaces in $\R^{2,2}$}
\author{Pierre Bayard\footnote{bayard@ciencias.unam.mx, Facultad de Ciencias, Universidad Nacional Aut\'onoma de M\'exico, M\'exico}, Victor Patty\footnote{victorp@ifm.umich.mx, Instituto de F\'{\i}sica y Matem\'aticas, U.M.S.N.H., Ciudad Universitaria, CP. 58040 Morelia, Michoac\'an, M\'exico}, Federico S\'anchez-Bringas\footnote{sanchez@unam.mx, Facultad de Ciencias, Universidad Nacional Aut\'onoma de M\'exico, M\'exico}}
\date{}
\begin{document}
\maketitle
\noindent Abstract: We study the second order invariants of a Lorentzian surface in $\R^{2,2},$ and the curvature hyperbolas associated to its second fundamental form. Besides the four natural invariants, new invariants appear in some degenerate situations. We then introduce the Gauss map of a Lorentzian surface and give an extrinsic proof of the vanishing of the total Gauss and normal curvatures of a compact Lorentzian surface. The Gauss map and the second order invariants are then used to study the asymptotic directions of a Lorentzian surface and discuss their causal character. We also consider the relation of the asymptotic lines with the mean directionally curved lines. We finally introduce and describe the quasi-umbilic surfaces, and the surfaces whose four classical invariants vanish identically.
\section*{Introduction}

Let $\R^{2,2}$ be the space $\R^4$ with the metric 
$$g=-dx_1^2+dx_2^2-dx_3^2+dx_4^2.$$ 

A surface $M\subset \R^{2,2}$ is said to be Lorentzian if the metric $g$ induces a Lorentzian metric, i.e. a metric of signature $(1,1),$ on $M:$ the tangent and the normal bundles $TM$ and $NM$ of a Lorentzian surface are equipped with Lorentzian fibre metrics. The second fundamental form at a point $p$ of a Lorentzian surface $M$ is a quadratic map $T_pM\rightarrow N_pM.$ The numerical invariants of the second fundamental form are second order invariants of the surface at $p,$ and locally determine the extrinsic geometry of the surface in $\R^{2,2}.$ The first purpose of the paper is to completely determine these invariants: additionally to the 4 natural invariants $|\H|^2,$ $K,$ $K_N$ and $\Delta$ which are the norm of the mean curvature vector, the Gauss curvature, the normal curvature and the resultant of the second fundamental form traducing the local convexity of the surface, new invariants appear in some degenerate cases.  A systematic study of the numerical invariants of a quadratic map $\R^{1,1}\rightarrow \R^{1,1}$ is necessary for this complete description. The second order invariants of surfaces and their geometric meaning have been extensively studied in different settings. In \cite{L} J. Little studied them in the case of a surface immersed in 4-dimensional Euclidian space. The second order invariants of a spacelike and a timelike surface in 4-dimensional Minkowski space were systematically studied in \cite{b_s_1} and \cite{b_s_3}. With the study of the quadratic maps between two Lorentzian planes, the present paper thus completes the description of the second order invariants of surfaces in 4-dimensional pseudo-Euclidian spaces. 

We then introduce the notion of curvature hyperbola associated to a quadratic map $\R^{1,1}\rightarrow \R^{1,1},$ which is analogous to the classical notion of curvature ellipse introduced in the Euclidian setting  \cite{L,W}. Its geometric properties may be naturally given in terms of the invariants of the quadratic map. When applied to the second fundamental form of a Lorentzian surface in $\R^{2,2},$ the curvature hyperbola gives a useful local representation of the surface. More cases appear than in the classical Euclidian case.

With these algebraic preliminaries at hand, we then study Lorentzian surfaces in $\R^{2,2}.$ We first introduce the Gauss map of an oriented Lorentzian surface. We show that the Gauss and the normal curvatures are obtained taking the pull-back by the Gauss map of the Lie bracket in $\Lambda^2\R^{2,2}$; as a consequence of this formula we obtain an extrinsic proof of the well-known fact that the total Gauss and normal curvatures vanish for a compact Lorentzian surface in $\R^{2,2}.$

We then use the preceding results  to introduce the notion of asymptotic directions of a Lorentzian surface in $\R^{2,2}$ and in Anti de Sitter space; we especially discuss the causal character of the asymptotic lines in terms of the invariants. Moreover, we relate these directions with the contact directions associated to the family of height functions on a Lorentzian surface $M$ in $\R^{2,2}$
 \cite{Ch-I}. We also introduce the mean directionally curved lines on a Lorentzian surface and specify their relation with the asymptotic lines.

We finally study the quasi-umbilic surfaces in $\R^{2,2},$ which are defined as the Lorentzian surfaces whose curvature hyperbolas degenerate at every point to a line with one point removed; alternatively, they are non-umbilic surfaces such that 
$$|\H|^2=K\hspace{1cm}\mbox{and}\hspace{1cm}K_N=\Delta=0$$
at every point. We then describe the Lorentzian surfaces in $\R^{2,2}$ whose classical invariants $|\H|^2,$ $K,$ $K_N$ and $\Delta$ vanish identically: they are surfaces in degenerate hyperplanes or flat umbilic or quasi-umbilic surfaces. In \cite{c}, J. Clelland  introduced and described the quasi-umbilic surfaces in 3-dimensional Minkowski space. The results of this last paper were then extended to the 4-dimensional Minkowski space in  \cite{b_s_3}; in the present paper, the results concerning the quasi-umbilic surfaces in $\R^{2,2}$ may also be considered as extending the main results of \cite{c}.
\\

The outline of the paper is as follows: we first study the quadratic maps from the Lorentz plane $\R^{1,1}$ into itself and their numerical invariants in Section \ref{section invariants}, and describe the curvature hyperbola associated to such a quadratic map in Section \ref{section curvature hyperbola}; we then study the Gauss map of a Lorentzian surface in Section \ref{section Gauss map}, and the asymptotic lines and the mean directionally curved lines of a Lorentzian surface in $\R^{2,2}$ and in Anti-de Sitter space in Section \ref{section asymptotic directions}. In Section \ref{section quasi umbilic}, we finally introduce the notion of quasi-umbilic surfaces and describe the surfaces which are umbilic or quasi-umbilic, and also the surfaces whose classical invariants vanish identically.

\section{Quadratics maps $\R^{1,1}\to \R^{1,1}$ and their numerical invariants}\label{section invariants}

Let $\R^{1,1}$ be the vector space $\R^2$ equipped with the Lorentzian metric 
$$\la\cdot,\cdot\ra:=-dx_1^2+dx_2^2.$$  
We will say that a non-zero vector $X$ belonging to $\R^{1,1}$ is spacelike (resp. timelike, or lightlike) if  its Lorentzian norm $\la X,X\ra$ is positive (resp. negative, or null).

We denote by $Q(\R^{1,1},\R^{1,1})$ the vector space of quadratics maps from $\R^{1,1}$ to $\R^{1,1}.$ We suppose that $\R^{1,1}$ is canonically oriented in space and in time: the canonical basis of $\R^{1,1}$ defines the orientation and a timelike vector in $\R^{1,1}$ is said to be future-directed if its first component in the canonical basis is positive. We consider the reduced (connected) group $SO(1,1)$  of Lorentzian direct isometries of $\R^{1,1}.$ 
This group acts on $Q(\R^{1,1},\R^{1,1})$ by composition (on the left and on the right)
\begin{align*}
SO(1,1)\times Q(\R^{1,1},\R^{1,1})\times SO(1,1) & \to Q(\R^{1,1},\R^{1,1})\\
(g_1,q,g_2) & \to g_1\circ q\circ g_2. 
\end{align*} 
In this section, we are interested in the description of the quotient set 
\[SO(1,1) \backslash Q(\R^{1,1},\R^{1,1})/SO(1,1);\]
specifically, we define numerical invariants on this quotient set which lead to a classification of the quadratic maps $\R^{1,1}\rightarrow\R^{1,1}$ up to the actions of $SO(1,1)$ (Theorem \ref{th classification}). The notion of \emph{quasi-umbilic} quadratic map will also emerge naturally.

\subsection{Forms associated to a quadratic map}

We fix $q\in Q(\R^{1,1},\R^{1,1}).$ If $\nu\in \R^{1,1},$ we denote by $S_{\nu}:\R^{1,1}\to \R^{1,1}$ the symmetric endomorphism associated to the real quadratic form $\la q,\nu \ra,$ i.e. such that
$$\la S_{\nu}(x),x\ra=\la q(x),\nu \ra$$
for all $x\in\R^{1,1}.$ For $\nu,\nu_1,\nu_2\in\R^{1,1}$ we define  \[L_q(\nu):=\frac{1}{2}\text{tr}(S_{\nu}),\hspace{0.1in}Q_q(\nu):=\det(S_{\nu})\hspace{0.1in}\text{and}\hspace{0.1in}A_q(\nu_1,\nu_2):=\frac{1}{2}[S_{\nu_1},S_{\nu_2}],\] 
where $[S_{\nu_1},S_{\nu_2}]$ denotes the morphism $S_{\nu_1}\circ S_{\nu_2}-S_{\nu_2}\circ S_{\nu_1};$ this morphism is skew-symmetric on $\R^{1,1},$ and thus identifies with the real number $\epsilon$ such that its matrix in the canonical basis of $\R^{1,1}$ is \[ \begin{pmatrix} 0 & \epsilon\\ \epsilon & 0
\end{pmatrix}.\] 

In the sequel, we will implicitly make this identification. We note that $L_q$ is a linear form, $Q_q$ is a quadratic form and $A_q$ is a bilinear skew-symmetric form on $\R^{1,1}.$ These forms are linked according to the following lemma:

\begin{lemma}\label{def Phi}
The quadratic form $\Phi_q:=L_q^2-Q_q$ satisfies the following identity: for all $\nu_1,\nu_2\in \R^{1,1},$
\begin{equation}\label{rela_formas}
\Phi_q(\nu_1)\Phi_q(\nu_2)=\tilde{\Phi}_q(\nu_1,\nu_2)^2-A_q(\nu_1,\nu_2)^2,
\end{equation} where $\tilde{\Phi}_q(\cdot,\cdot)$ denotes the symmetric bilinear form such that $\tilde{\Phi}_q(\nu,\nu)=\Phi_q(\nu)$ for all $\nu\in\R^{1,1}.$ In particular the signature of $\Phi_q$ is $(r,s)$ with $0\leq r,s\leq1.$
\end{lemma}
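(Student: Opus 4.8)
The plan is to compute everything explicitly in the canonical basis of $\R^{1,1}$ and reduce the claimed identity to an algebraic identity among $2\times 2$ matrices. First I would write, for each $\nu\in\R^{1,1}$, the symmetric endomorphism $S_\nu$ as a matrix; since $\nu\mapsto S_\nu$ is linear, it suffices to fix $S_{e_1}=:S_1$ and $S_{e_2}=:S_2$ and set $S_\nu=\nu_1 S_1+\nu_2 S_2$ for $\nu=(\nu_1,\nu_2)$. Here "symmetric" means symmetric with respect to the Lorentzian inner product, so if $J=\mathrm{diag}(-1,1)$ then $JS_i$ is an ordinary symmetric matrix; equivalently $S_i$ has the form $\begin{pmatrix} a_i & b_i\\ -b_i & c_i\end{pmatrix}$ for suitable reals. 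Then $L_q(\nu)=\tfrac12\tr(S_\nu)$ is linear in $\nu$, $Q_q(\nu)=\det(S_\nu)$ is a quadratic form in $\nu$, and $\Phi_q(\nu)=L_q(\nu)^2-Q_q(\nu)$ is the quadratic form whose value is $\tfrac14(\tr S_\nu)^2-\det S_\nu=\tfrac14(\lambda_1-\lambda_2)^2$ in terms of the eigenvalues of $S_\nu$; this is a polynomial identity I can just expand.

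Next I would observe that the right-hand side of \eqref{rela_formas} is, by definition, the polarization: $\tilde\Phi_q(\nu_1,\nu_2)^2-A_q(\nu_1,\nu_2)^2$. The key algebraic fact to isolate is a $2\times2$ matrix identity expressing $[S_{\nu_1},S_{\nu_2}]$ and $\tilde\Phi_q(\nu_1,\nu_2)$ through $S_{\nu_1}$ and $S_{\nu_2}$. For two $2\times2$ matrices one has the classical polarization of the determinant, $\det(S_1+S_2)=\det S_1+\det S_2+\big(\tr S_1\tr S_2-\tr(S_1S_2)\big)$, and likewise the commutator of two trace-parametrized $2\times2$ matrices is controlled by $\tr(S_1S_2)$ and the traces/determinants via the Cayley–Hamilton relation $S_i^2=(\tr S_i)S_i-(\det S_i)\,\mathrm{Id}$. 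Using Cayley–Hamilton I would compute $(S_1S_2-S_2S_1)^2$: since a traceless $2\times2$ matrix $N$ satisfies $N^2=-\det(N)\,\mathrm{Id}$, and $S_1S_2-S_2S_1$ is traceless, $[S_1,S_2]^2=-\det[S_1,S_2]\,\mathrm{Id}$. A direct expansion then gives $\det[S_1,S_2]$ as a polynomial in $\tr S_1,\tr S_2,\tr(S_1S_2),\det S_1,\det S_2$, and matching this against the product $\Phi_q(\nu_1)\Phi_q(\nu_2)$ and against $\tilde\Phi_q(\nu_1,\nu_2)^2$ yields \eqref{rela_formas} after routine but finite rearrangement. (Equivalently, one can just plug the $3$-parameter forms of $S_{\nu_1},S_{\nu_2}$ directly into both sides and check equality of polynomials, which is guaranteed to work but is less transparent.)

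Finally, the statement about the signature of $\Phi_q$. The identity \eqref{rela_formas} with $\nu_1=\nu_2=\nu$ is trivial, so the content is: whenever $\Phi_q(\nu_1)$ and $\Phi_q(\nu_2)$ have strictly the same sign, the right-hand side $\tilde\Phi_q(\nu_1,\nu_2)^2-A_q(\nu_1,\nu_2)^2$ is positive, hence in particular $\tilde\Phi_q(\nu_1,\nu_2)\neq 0$; so $\Phi_q$ cannot be definite of either sign on a $2$-dimensional space without being contradicted — more precisely, if $\Phi_q$ were positive definite (signature $(2,0)$) pick any $\nu_1,\nu_2$ spanning $\R^{1,1}$ with $\tilde\Phi_q(\nu_1,\nu_2)=0$ (possible since a positive-definite form on $\R^2$ admits orthogonal bases), then the right-hand side is $-A_q(\nu_1,\nu_2)^2\le 0$ while the left-hand side is $>0$, a contradiction; similarly rule out $(0,2)$. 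Hence the signature of $\Phi_q$ is one of $(0,0),(1,0),(0,1),(1,1)$, i.e. $(r,s)$ with $0\le r,s\le 1$. I expect the main obstacle to be purely bookkeeping: carrying out the $2\times2$ determinant-of-commutator expansion cleanly enough that the two sides of \eqref{rela_formas} visibly agree; choosing the Cayley–Hamilton route over brute-force expansion keeps this manageable.
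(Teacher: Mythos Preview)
Your proposal is correct and follows essentially the same route the paper indicates: represent $S_{\nu_1},S_{\nu_2}$ by matrices in the canonical basis and expand both sides of \eqref{rela_formas}. The paper says no more than ``direct computation'' for this, so your write-up is in fact more detailed than the paper's own proof; your signature argument (diagonalize a putative definite $\Phi_q$ and reach a sign contradiction) is also the natural one, though the paper leaves that step implicit.

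One remark worth making: the paper points to a cleaner conceptual alternative in Remark~\ref{rmk alt arg}. Writing $S_\nu^0:=S_\nu-L_q(\nu)I$, one has $\tilde\Phi_q(\nu_1,\nu_2)=\langle S_{\nu_1}^0,S_{\nu_2}^0\rangle$ and $A_q(\nu_1,\nu_2)=\det_{(E_1,E_2)}(S_{\nu_1}^0,S_{\nu_2}^0)$ in the Lorentzian plane $\mathcal S$ of traceless symmetric operators; then \eqref{rela_formas} is exactly the Lagrange identity $\langle u,u\rangle\langle v,v\rangle=\langle u,v\rangle^2-\det(u,v)^2$ in a plane of signature $(1,1)$. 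This bypasses the Cayley--Hamilton bookkeeping entirely and makes the signature constraint immediate (a pullback of a $(1,1)$ form cannot be definite). Your brute-force/Cayley--Hamilton route works fine, but if you want to shorten the argument, this is the way.
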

This lemma may be proved by a direct computation, using the representation of $S_{\nu_1}$ and $S_{\nu_2}$ by their matrices in the canonical basis of $\R^{1,1}.$ An alternative argument will also be given in Remark \ref{rmk alt arg} below.
\begin{remark}
The forms  $L_q,\Phi_q$ and $A_q$ are invariant by the right-action of $SO(1,1)$ on $q:$ for all $g\in SO(1,1)$ we have \[L_{q\circ g}=L_q,\hspace{0.1in}\Phi_{q\circ g}=\Phi_q\hspace{0.1in}\text{and}\hspace{0.1in}A_{q\circ g}=A_q.\]
They are thus also defined on the quotient set $Q(\R^{1,1},\R^{1,1})/SO(1,1).$
\end{remark} 

In the next section we will show the following: if $\Phi_q\neq 0,$ the forms $L_q,\Phi_q$ and $A_q$ determine $q$ up to the right-action of $SO(1,1)$; in the case $\Phi_q\equiv 0,$ $q$ is determined, up to the right-action of $SO(1,1),$ by the form $L_q$ together with some additional vector $\mu_q\in\R^{1,1}$ (Lemmas \ref{lem theta1} and \ref{des_vec} below).

\subsection{Reduction of a quadratic map}\label{section reduction quadratic map}
We denote by $\mathcal{S}$ the vector space of the traceless symmetric endomorphisms of $\R^{1,1}.$ $\mathcal{S}$ is naturally equipped with a metric tensor of signature $(1,1):$ if $u$ belongs to $\mathcal{S},$ we define its norm as \[|u|^2:=\frac{1}{2}\text{tr}(u^2).\] Expressing $u$ in the canonical basis of $\R^{1,1},$ we also easily get
\[|u|^2=-\det u.\]
Setting 
$$E_1:=\begin{pmatrix}1 & 0\\ 0 & -1 \end{pmatrix}\hspace{0.5cm}\text{and} \hspace{0.5cm}E_2:=\begin{pmatrix}0 & 1\\ -1 & 0 \end{pmatrix},$$ 
we have that $(E_1,E_2)$ is a Lorentzian basis for $\mathcal{S}$ such that $|E_1|^2=-|E_2|^2=1.$ Now, associated to a given quadratic map $q\in Q(\R^{1,1},\R^{1,1}),$ we consider the linear map 
\begin{eqnarray*}
f_q:\hspace{.5cm}\R^{1,1}& \to& \mathcal{S}\\ 
\nu &\mapsto& S_{\nu}^0:=S_{\nu}-L_q(\nu)I;
\end{eqnarray*}
for $\nu\in\R^{1,1},$ $f_q(\nu)$ is thus the traceless part $S_{\nu}^0$ of the symmetric operator $S_{\nu}$.
\begin{remark}\label{rmk alt arg}
It is not difficult to prove the following: for all $\nu_1,\nu_2\in\R^{1,1}$ we have 
\begin{equation}\label{pull_f} 
\tilde{\Phi}_q(\nu_1,\nu_2)=\la f_q(\nu_1),f_q(\nu_2)\ra \hspace{.5cm} \text{and} \hspace{.5cm} A_q(\nu_1,\nu_2)=\det{}_{(E_1,E_2)}(f_q(\nu_1),f_q(\nu_2)),
\end{equation}
where, if $s$ and $s'$ belong to $\mathcal{S},$ $\langle s,s'\rangle$ and $\det_{(E_1,E_2)}(s,s')$ stand respectively for the scalar product and for the determinant  in the basis $(E_1,E_2)$ of $s$ and $s'$ (considered as vectors of the Lorentzian plane $\mathcal{S}$). Formulas (\ref{pull_f}) and the Lagrange identity in the Lorentzian plane $(\mathcal{S},\la\cdot,\cdot\ra)$ give a direct proof of (\ref{rela_formas}).
\end{remark}

We recall the following convention concerning the orientation of $\R^{1,1}$: a basis $(e_1,e_2)$ of $\R^{1,1}$ is \textit{positively oriented} if it has the orientation of the canonical basis and if the vector $e_1$ is timelike and future-directed, i.e. is such that its first component in the canonical basis is positive (see the introduction of this section). 

\begin{remark}\label{form_cano}
If $u$ belongs to $\mathcal{S},$ $u\neq 0,$ its norm $|u|^2=-\det(u)$ determines its canonical form as follows: $u$ is diagonalizable if and only if $|u|^2>0,$ i.e. if and only if $u\in\mathcal{S}$ is spacelike; in that case, 
\[u=\pm \sqrt{|u|^2}E_1\] 
in some positively oriented and orthonormal basis of $\R^{1,1}.$ If $|u|^2<0$ ($u$ is timelike in $\mathcal{S}$), then 
\begin{equation}\label{reduction timelike}
u=\pm \sqrt{-|u|^2}E_2
\end{equation}
in some positively oriented and orthonormal basis of $\R^{1,1}.$ Finally, if $|u|^2=0,$ setting 
$$N_1:=\frac{1}{2}(E_1+E_2)\hspace{.5cm}\mbox{and}\hspace{.5cm}N_2:=\frac{1}{2}(E_2-E_1),$$ 
then
\begin{equation}\label{reduction lightlike}
u=\pm N_i,\hspace{1cm}i=1\ \mbox{or}\ 2
\end{equation}
in some positively oriented and orthonormal basis of $\R^{1,1}.$ 
\end{remark}

We now consider the reduction of a quadratic map $q\in Q(\R^{1,1},\R^{1,1}),$ and divide the discussion in three cases, according to the ranks of $f_q$ and $\Phi_q.$

\begin{enumerate}
\item $\bf rang(f_q)=2,$ or $\bf rang(f_q)=1$ with $\bf \Phi_q\neq 0.$ In that case, there is an orthonormal and positively oriented basis $(e_1,e_2)$ of $\R^{1,1}$ such that, in $(e_1,e_2),$ $S_{\nu},$ for all $\nu\in\R^{1,1},$ has the following canonical form:
\begin{center}
\begin{tabular}{|l|l|c|}\hline
Rank $f_q$ & Signature of $\Phi_q$ & Canonical form of $S_{\nu}$\\            
\hline \hline
2 & (1,1) &  $S_{\nu}=L_q(\nu)I\pm (\tilde{\Phi}_q(\nu_0,\nu)E_1+A_q(\nu_0,\nu)E_2)$ \\ \hline
1 & (1,0) & $S_{\nu}=L_q(\nu)I\pm \tilde{\Phi}_q(\nu_0,\nu)E_1$ \\ \hline
1 & (0,1) & $S_{\nu}=L_q(\nu)I\pm \tilde{\Phi}_q(\nu_0,\nu)E_2$\\ \hline
\end{tabular}
\end{center}
In the table, $\nu_0$ is some vector belonging to $\R^{1,1}.$ We only give brief indications of the proof, since similar results are proved in \cite{b_s_3}. In the first case, we consider $\nu_0$ such that $\Phi(\nu_0)=1;$ from the remark above, $S_{\nu_0}=L(\nu_0)I\pm E_1$ in some orthonormal and positively oriented basis $(e_1,e_2)$ of $\R^{1,1}$.  In $(e_1,e_2)$ and for an arbitrary $\nu\in\R^{1,1},$ $S_{\nu}$ may be a priori written
$$S_{\nu}=L(\nu)I\pm\left(a_{\nu}E_1+\b_{\nu}E_2\right)$$
for some $a_\nu,\b_\nu$ belonging to $\R.$ Straightforward computations using (\ref{pull_f}) then give $a_\nu=\tilde{\Phi}_q(\nu_0,\nu)$ and $b_\nu=A_q(\nu_0,\nu)$ and thus the required expression. The other cases may be proved similarly (taking $\nu_0$ such that $\Phi(\nu_0)=-1$ in the last case).

We then define 
\[Q_1(\R^{1,1},\R^{1,1}):=\{q\in Q(\R^{1,1},\R^{1,1}):\Phi_q\neq 0\}.\] Setting \[P_1=\{(L,\Phi,A):\Phi\hspace{.2cm}\text{is not zero, has a non-positive discriminant, and (\ref{rela_formas}) holds}\}
\] where $L,\Phi$ and $A$ are respectively linear, bilinear symmetric and skew-symmetric forms on $\R^{1,1},$ the following result holds:

\begin{lemma}\label{lem theta1}
The map $\Theta_1:Q_1(\R^{1,1},\R^{1,1})/SO(1,1)\longrightarrow P_1$ given by \[ [q]\longmapsto (L_{[q]},\Phi_{[q]},A_{[q]})\] is surjective and two-to-one.
\end{lemma}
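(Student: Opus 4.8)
The plan is to analyze the fibres of $\Theta_1$ directly. Given $(L,\Phi,A)\in P_1$, I need to produce a quadratic map $q$ with $L_q=L$, $\Phi_q=\Phi$, $A_q=A$, and then show that the set of such $q$, modulo the right $SO(1,1)$-action, consists of exactly two classes. Surjectivity comes from the reduction table established above: since $\Phi\neq 0$ has non-positive discriminant, its signature is $(1,1)$, $(1,0)$, or $(0,1)$, and in each case I pick a suitable $\nu_0$ (with $\Phi(\nu_0)=1$ in the first two cases, $\Phi(\nu_0)=-1$ in the last) and define $S_\nu$ by the formula in the corresponding row of the table, with a fixed choice of sign. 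One checks via the identities $(\ref{pull_f})$ and $(\ref{rela_formas})$ that this $S_\nu$ indeed depends linearly on $\nu$ and that the resulting $q$ (defined by $\langle q(x),\nu\rangle=\langle S_\nu(x),x\rangle$) has the prescribed $L_q,\Phi_q,A_q$; this is the routine computation I will not grind through.

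For the fibre count, suppose $q$ and $q'$ both map to $(L,\Phi,A)$. Then $L_q=L_{q'}$, so $f_q$ and $f_{q'}$ differ only in their traceless parts, and by $(\ref{pull_f})$ the linear maps $f_q,f_{q'}:\R^{1,1}\to\mathcal{S}$ pull back both the metric and the area form of $\mathcal{S}$ to the same bilinear forms $\tilde\Phi_q=\tilde\Phi$ and $A_q=A$ on $\R^{1,1}$. Since $\Phi\neq 0$, the image of $f_q$ is either all of $\mathcal{S}$ (rank $2$) or a non-degenerate line in $\mathcal{S}$ (rank $1$, signature $(1,0)$ or $(0,1)$). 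A linear map between (sub)spaces of $\mathcal{S}\cong\R^{1,1}$ preserving the induced metric and orientation is unique once its image is fixed, so $f_{q'}=R\circ f_q$ for a unique element $R\in O(\mathcal{S})$; requiring the pulled-back area forms to agree forces $R\in SO(\mathcal{S})$ (on the image), but this still allows, in the rank-$1$ cases, the residual sign $R=-\mathrm{id}$ on the orthogonal complement. Translating back: $q'$ and $q$ are related by a direct isometry of $\R^{1,1}$ acting on the right precisely when $f_{q'}=f_q\circ g$ for some $g\in SO(1,1)$, and the discussion of canonical forms (Remark \ref{form_cano}) shows that, after normalizing $\nu_0$, the only remaining freedom is the overall sign $\pm$ appearing in every row of the table. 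Hence each fibre of $\Theta_1$ contains exactly the two classes $[q]$ and $[-q]$ (or, more precisely, the classes obtained from the two sign choices), and they are genuinely distinct because, e.g., $\langle S_{\nu_0},\cdot\rangle$ changes sign.

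The main obstacle is the second half: showing the fibre has \emph{exactly} two elements, not more and not fewer. The "not more" direction requires care in the rank-$1$ cases, where $f_q$ has a kernel and one must argue that the isometry of $\R^{1,1}$ witnessing $f_{q'}=f_q\circ g$ can always be chosen in the connected component $SO(1,1)$ rather than in a disconnected $O(1,1)$ — here the crucial point is that $SO(1,1)$ acts transitively on positively oriented orthonormal bases, so any two reductions of the form given in the table differ by an element of $SO(1,1)$ together with a global sign. The "not fewer" direction — that $[q]\neq[-q]$ — needs the observation that no $g\in SO(1,1)$ can satisfy $S_\nu\circ g = -g\circ S_\nu$ identically when $\Phi_q\neq 0$, which follows because such a $g$ would have to reverse the sign of the nonzero traceless part $S^0_{\nu_0}$ while commuting with $I$, impossible for an orientation-preserving isometry acting by $h\mapsto g^{-1}hg$ on $\mathcal{S}$. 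Once these two points are nailed down, surjectivity plus the exact two-to-one property give the lemma.
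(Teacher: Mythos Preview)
The paper itself gives no proof of this lemma, only a pointer to \cite{b_s_3}, so there is little to compare against; your overall strategy---build a preimage from the canonical-form table for surjectivity, then show that any two preimages of $(L,\Phi,A)$ differ by an isometry of $\mathcal S$ acting on $f_q$, leaving precisely the global sign---is the natural argument and almost certainly what the reference does.

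That said, your ``translating back'' step misidentifies how the right action moves $f_q$. The action $q\mapsto q\circ g$ sends $S_\nu$ to $g^{-1}S_\nu g$ (conjugation on the \emph{source} of $q$), so $f_{q\circ g}(\nu)=g^{-1}f_q(\nu)\,g$; it does \emph{not} give $f_q\circ g$, which would be precomposition on the $\nu$-variable and corresponds rather to the left action. The correct statement is that $g\mapsto (h\mapsto g^{-1}hg)$ is a surjection from the connected group $SO(1,1)$ onto the identity component $SO(\mathcal S)_0$, while the relation $f_{q'}=R\circ f_q$ coming from equality of the pulled-back metric and area form only pins $R$ down to the two-component group $SO(\mathcal S)=SO(\mathcal S)_0\cup(-I)\,SO(\mathcal S)_0$; this is what yields exactly two classes. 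Correspondingly, the two classes are not $[q]$ and $[-q]$ (that would flip $L_q$ as well) but $[q]$ and the class with the same $L$ and traceless part $-f_q$; the distinctness condition you need is that no $g\in SO(1,1)$ satisfies $g^{-1}S^0_\nu g=-S^0_\nu$ for all $\nu$, which holds because conjugation by a connected group cannot reach $-I\in SO(\mathcal S)$. With these corrections the argument is sound; in the rank-$1$ case you should also note that the image line of $f_q$ in $\mathcal S$ is not fixed a priori, so one first uses the transitivity of $SO(\mathcal S)_0$ on each branch of the unit hyperbola (Remark~\ref{form_cano}) to normalize it before invoking the sign dichotomy.
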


We refer to \cite{b_s_3} for details, where a similar result is proved.

By the natural left-action of $SO(1,1)$ on $Q_1(\R^{1,1},\R^{1,1})/SO(1,1),$ the forms $L_{[q]},$ $\Phi_{[q]}$ transform as 
\[L_{g\circ [q]}=L_{[q]}\circ g^{-1},\hspace{1cm}\Phi_{g\circ [q]}=\Phi_{[q]}\circ g^{-1},\] whereas the form $A_{[q]}$ is invariant. Thus, if $SO(1,1)$ acts on $P_1$ by 
\begin{equation}\label{action P1}
g.(L,\Phi,A)=(L\circ g^{-1},\Phi\circ g^{-1},A),
\end{equation}
the map $\Theta_1$ is $SO(1,1)$-equivariant and thus induces a twofold map 
\begin{equation}\label{def theta1b}
\overline{\Theta}_1:SO(1,1)\backslash Q_1(\R^{1,1},\R^{1,1})/SO(1,1)\longrightarrow SO(1,1)\backslash P_1.
\end{equation} 
Since the formula (\ref{rela_formas}) permits the recovering of $A$ (up to sign) from $\Phi,$ the description of the quotient set $SO(1,1)\backslash Q_1(\R^{1,1},\R^{1,1})/SO(1,1)$ will be achieved with the simultaneous reduction of the forms $L_{[q]}$ and $\Phi_{[q]}.$ This is the aim of the first part of Section \ref{section sim red}.
 
\item $\bf rang(f_q)=1$ and $\bf \Phi_q=0.$ In that case, $f_q(\R^{1,1})$ is a line in $\mathcal{S},$ which is lightlike, and we have:
\begin{lemma}\label{vec_dist}There is a vector $\mu_q\in\R^{1,1},$ unit spacelike or timelike, or lightlike distinguished, and an orthonormal and positively oriented basis $(e_1,e_2)$ of $\R^{1,1}$ such that, for all $\nu\in\R^{1,1},$ the matrix of $S_{\nu}$ in $(e_1,e_2)$ is given by \begin{equation}\label{endo_vec_dist} S_{\nu}=L_q(\nu)I+\la\mu_q,\nu\ra N, \end{equation} where $N=N_1$ or $N_2$ (see Remark \ref{form_cano}). The vector $\mu_q$ and the basis $(e_1,e_2)$ are uniquely defined.
\end{lemma}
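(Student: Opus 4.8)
The plan is to read the statement off the image of the linear map $f_q$. Recall from (\ref{pull_f}) that $\Phi_q(\nu)=\la f_q(\nu),f_q(\nu)\ra$, so in the present case ($\mathrm{rang}(f_q)=1$ and $\Phi_q=0$) the line $\ell:=f_q(\R^{1,1})$ is lightlike in the Lorentzian plane $\mathcal S$, as already noted. Since the null cone of $(\mathcal S,\la\cdot,\cdot\ra)$ written in the orthonormal basis $(E_1,E_2)$ is exactly $\R N_1\cup\R N_2$ (cf. Remark \ref{form_cano}), there is a vector $N\in\{N_1,N_2\}$, determined by $q$, with $\ell=\R N$. Hence $f_q(\nu)=\psi(\nu)\,N$ for a unique non-zero linear form $\psi$ on $\R^{1,1}$, and, $\la\cdot,\cdot\ra$ being non-degenerate on $\R^{1,1}$, $\psi=\la\mu,\cdot\ra$ for a unique non-zero $\mu\in\R^{1,1}$. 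In the canonical basis of $\R^{1,1}$ this already gives $S_\nu=L_q(\nu)I+\la\mu,\nu\ra N$ for all $\nu\in\R^{1,1}$.

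Next I would normalize $\mu$ by a change of orthonormal basis. The positively oriented orthonormal bases of $\R^{1,1}$ are exactly those of the form $(e_1,e_2)=g\cdot(\text{canonical basis})$ with $g\in SO(1,1)$, and in such a basis the matrix of $S_\nu$ is $g^{-1}[S_\nu]g=L_q(\nu)I+\la\mu,\nu\ra\,g^{-1}Ng$ (the scalar part $L_q(\nu)I$ being unaffected by conjugation). A direct computation with the hyperbolic rotations shows that conjugation by $SO(1,1)$ acts on $\mathcal S=\mathrm{span}(E_1,E_2)$ by Lorentzian rotations; in particular it fixes the null line $\R N$ and sends $N$ to $c\,N$ for a positive factor $c=c(g)$ which runs over all of $(0,\infty)$. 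Therefore, in the basis $(e_1,e_2)$, $S_\nu=L_q(\nu)I+\la c\mu,\nu\ra N$, and it suffices to choose $g$ (equivalently $c>0$) so that $\mu_q:=c\mu$ is normalized as in the statement — a unit vector when $\mu$ is spacelike or timelike, the distinguished lightlike vector proportional to $\mu$ (by a positive factor) when $\mu$ is lightlike. This produces formula (\ref{endo_vec_dist}).

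For uniqueness, note that the null line $\ell$ is intrinsically attached to $f_q$, and that conjugation by the connected group $SO(1,1)$ preserves each null line of $\mathcal S$ together with the orientation along it. Hence the vector $N\in\{N_1,N_2\}$ is forced; then the linear form $\psi$, and so the half-line $\R_{>0}\mu$, are forced, so that the direction and sign of $\mu_q$ are forced; the only remaining freedom is the positive scalar $c$, which is determined by the normalization imposed on $\mu_q$ (here one uses that each null half-line carries exactly one distinguished lightlike vector). Thus $\mu_q$ and the basis $(e_1,e_2)$ are uniquely determined.

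The step I expect to be most delicate is precisely this uniqueness when $\mu_q$ is lightlike: normalization by Lorentzian length is then unavailable, so one must rely on the notion of distinguished lightlike vector and check that the residual gauge group — the positive rescalings of $N$ induced by $SO(1,1)$-conjugation — acts simply transitively on each null ray of $\mathcal S$, while verifying that the connected group $SO(1,1)$ can neither interchange the two null lines of $\mathcal S$ nor reverse orientation along a null ray (this is what rigidifies both $N$ and the sign of $\mu_q$). The remaining ingredients — the explicit conjugation action on $\mathcal S$ and the identification of $\psi$ with $\la\mu,\cdot\ra$ — are routine.
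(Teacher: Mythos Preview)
Your proof is correct and follows essentially the same approach as the paper's: write $S_\nu^0=\la\mu,\nu\ra N$ in the canonical basis, then conjugate by $g_\psi\in SO(1,1)$ to rescale $\mu$ (the paper computes this rescaling explicitly as $\mu\mapsto e^{2\psi}\mu$, which is exactly your positive factor $c$) and normalize. Your treatment of uniqueness is more explicit than the paper's---in particular your observation that the connected group $SO(1,1)$ neither swaps the two null lines of $\mathcal S$ nor reverses orientation along a null ray is the right way to pin down both $N$ and the sign of $\mu_q$---but the underlying argument is identical.
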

\begin{proof} In the canonical basis of $\R^{1,1},$ we have $S_{\nu}^0=\lambda_q(\nu)N,$ where $N=N_1$ or $N_2,$ and where $\lambda_q$ is a linear form on $\R^{1,1}.$ We define $\mu_q\in\R^{1,1}$ such that $\lambda_q(\nu)=\la \mu_q,\nu\ra$ for all $\nu\in\R^{1,1}.$ We now consider the basis of $\R^{1,1}$ obtained from the canonical basis by a Lorentzian rotation of angle $\psi.$ The matrix of $S_{\nu}^0$ in this basis is $S_{\nu}^0=\la e^{2\psi}\mu_q,\nu\ra N.$  Thus, there is a unique orthonormal and positively oriented basis of $\R^{1,1}$  such that in that basis $S_{\nu}^0=\la \mu_q,\nu\ra N$ with $|\mu_q|^2=+ 1,-1$ or $\mu_q=\frac{1}{2}(\pm 1,\pm1).$ 
\end{proof}

We now set
$$Q_2(\R^{1,1},\R^{1,1}):=\{q\in Q(\R^{1,1},\R^{1,1}):\Phi_q=0,f_q\neq0\}$$
and
$$P_2:=\R^{1,1}_*\times \mathcal{H}_0$$
where $\R^{1,1}_*$ stands for the set of linear forms on $\R^{1,1}$ and
$$\mathcal{H}_0:=\left\{\mu\in\R^{1,1}:\ |\mu|^2=\pm 1\hspace{0.1in}\text{or}\hspace{0.1in}\mu=\frac{1}{2}(\pm 1,\pm 1)\right\}.$$
\begin{lemma}\label{des_vec}
The map 
\begin{eqnarray*}
\Theta_2:\hspace{.5cm}Q_2(\R^{1,1},\R^{1,1})/SO(1,1)&\longrightarrow& P_2\\ 
{[q]}&\longmapsto& (L_{[q]},\mu_{[q]})
\end{eqnarray*}
is surjective and two-to-one.
\end{lemma}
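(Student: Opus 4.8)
The plan is to construct a section of $\Theta_2$ and then analyze its fibers, mirroring the structure already used for $\Theta_1$. For surjectivity, given $(L,\mu)\in P_2$, I would define $q\in Q(\R^{1,1},\R^{1,1})$ by prescribing its associated symmetric endomorphisms through the formula \eqref{endo_vec_dist}: set $S_\nu:=L(\nu)I+\langle\mu,\nu\rangle N$ in the canonical basis of $\R^{1,1}$, where $N=N_1$ (the choice of $N_1$ versus $N_2$ will not matter since a Lorentzian rotation by a suitable angle, or the time/space-orientation-reversing adjustment, interchanges them; I will need to check this is consistent with the sign conventions — this is one of the places where care is required). One recovers $q$ from the family $(S_\nu)_\nu$ by polarization: $\langle q(x),\nu\rangle=\langle S_\nu(x),x\rangle$ determines $q(x)$ since the metric on $\R^{1,1}$ is nondegenerate. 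A direct check then shows $L_q=L$, $\Phi_q\equiv 0$ (because $S_\nu^0=\langle\mu,\nu\rangle N$ has $|S_\nu^0|^2=-\det(N)\langle\mu,\nu\rangle^2=0$ as $N$ is lightlike in $\mathcal{S}$), and $f_q\neq 0$ provided $\mu\neq 0$, which holds since $\mu\in\mathcal{H}_0$. Finally $\mu_q=\mu$ by the uniqueness part of Lemma \ref{vec_dist}. Hence $[q]\in Q_2(\R^{1,1},\R^{1,1})/SO(1,1)$ maps to $(L,\mu)$, giving surjectivity.

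For the two-to-one property, suppose $q,q'\in Q_2(\R^{1,1},\R^{1,1})$ satisfy $L_{[q]}=L_{[q']}=:L$ and $\mu_{[q]}=\mu_{[q']}=:\mu$. By Lemma \ref{vec_dist}, there are uniquely determined orthonormal positively oriented bases $(e_1,e_2)$ and $(e_1',e_2')$ of $\R^{1,1}$ in which $S_\nu=L(\nu)I+\langle\mu,\nu\rangle N$ and $S_\nu'=L(\nu)I+\langle\mu,\nu\rangle N'$ respectively, with $N,N'\in\{N_1,N_2\}$. Two positively oriented orthonormal bases differ by an element $g\in SO(1,1)$, and conjugation by $g$ sends $N_1\mapsto e^{2\psi}N_1$-type scalings; the point is that replacing the basis $(e_1,e_2)$ by $(e_1',e_2')$ via $g$ changes $q$ by the right-action $q\mapsto q\circ g$, under which $L_q$ and $\mu_q$ are unchanged (this is exactly the computation $S_\nu^0=\langle e^{2\psi}\mu,\nu\rangle N$ in the proof of Lemma \ref{vec_dist}, forcing $\psi=0$ by uniqueness). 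Thus, up to the $SO(1,1)$-action, $q$ and $q'$ are represented in a common basis by $S_\nu=L(\nu)I+\langle\mu,\nu\rangle N$ with the same $N$ — but $q$ is recovered from the $S_\nu$ only up to the overall sign hidden in the $\pm N$ ambiguity of Remark \ref{form_cano}, equivalently $q$ and $-q$ have the same $L$ (no: $L_{-q}=-L_q$); rather the genuine ambiguity is the sign in $N=\pm N_i$, i.e. $q$ and the map $q''$ with $S_\nu''=L(\nu)I-\langle\mu,\nu\rangle N$ have the same image. I would verify that these two quadratic maps are \emph{not} $SO(1,1)$-equivalent to each other (so the fiber has exactly two elements, not one), by checking that no $g\in SO(1,1)$ can produce the sign change on $N$ while fixing $L$ and $\mu$.

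The main obstacle I anticipate is the careful bookkeeping of signs and orientations: specifically, pinning down exactly which discrete ambiguity produces the factor of two, and confirming that it is genuinely realized (i.e. the two preimages are distinct in the quotient) rather than collapsing. Concretely, I need to understand the stabilizer of the pair $(L,\mu)$ under the natural residual action and show the relevant sign flip on $N$ is an honest two-element ambiguity — the analogue in the $\Phi_q\neq 0$ case is the $\pm$ in the canonical forms of Lemma \ref{lem theta1}, and here one must check that the lightlike nature of $f_q(\R^{1,1})\subset\mathcal{S}$ does not accidentally identify $N$ with $-N$ under $SO(1,1)$-conjugation. Once this sign analysis is settled, the surjectivity and the counting both follow from the explicit reconstruction formula \eqref{endo_vec_dist} together with the uniqueness clause of Lemma \ref{vec_dist}, so the proof is short modulo that one delicate point.
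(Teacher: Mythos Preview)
Your proposal misidentifies the source of the two-to-one property. The paper's (one-line) proof is that, given $(L,\mu)\in P_2$, the two preimage classes come from defining $S_\nu$ via \eqref{endo_vec_dist} with $N=N_1$ or with $N=N_2$. These give genuinely distinct classes in $Q_2(\R^{1,1},\R^{1,1})/SO(1,1)$ because conjugation by $g_\psi\in SO(1,1)$ sends $N_1\mapsto e^{2\psi}N_1$ and $N_2\mapsto e^{-2\psi}N_2$ (this is exactly the computation in the proof of Lemma~\ref{vec_dist}); the connected group $SO(1,1)$ preserves each null line $\R N_1,\R N_2\subset\mathcal{S}$ separately and never interchanges them. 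You explicitly dismiss this dichotomy (``the choice of $N_1$ versus $N_2$ will not matter since a Lorentzian rotation \ldots\ interchanges them''), which is false; an orientation-reversing isometry would swap them, but such maps are excluded here.

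The alternative you propose --- that the ambiguity is $N\leftrightarrow -N$ --- does not produce two elements of the same fiber. Replacing $N$ by $-N$ in \eqref{endo_vec_dist} yields $S_\nu^0=\langle -\mu,\nu\rangle N$, and since $-\mu\in\mathcal{H}_0$ whenever $\mu\in\mathcal{H}_0$, the uniqueness clause of Lemma~\ref{vec_dist} gives $\mu_{q''}=-\mu$. Hence $\Theta_2([q''])=(L,-\mu)\neq(L,\mu)$, so $[q]$ and $[q'']$ lie in different fibers. Your surjectivity argument is fine, but the fiber analysis needs to be redone with the correct discrete invariant: which of the two null lines in $\mathcal{S}$ equals $f_q(\R^{1,1})$.
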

\begin{proof} To each pair $(L,\mu)\in P_2$ correspond two classes in $Q_2(\R^{1,1},\R^{1,1})/SO(1,1),$ defining $S_{\nu}$ in the canonical basis of $\R^{1,1}$ by (\ref{endo_vec_dist}), where $N$ may be chosen to be $N_1$ or $N_2.$ \end{proof}

If $SO(1,1)$ acts on $P_2$ by \[g.(L,\mu)=(L\circ g^{-1},g.\mu),\] where $g.\mu=g(\mu)$ if $|\mu|^2=\pm 1,$ and $g.\mu=\mu$ if $|\mu|^2=0,$ the map $\Theta_2$ is $SO(1,1)$-equivariant and thus induces a twofold map \[\overline{\Theta}_2:SO(1,1)\backslash Q_2(\R^{1,1},\R^{1,1})/SO(1,1)\longrightarrow SO(1,1)\backslash P_2.\] Thus, the description of the quotient set $SO(1,1)\backslash Q_2(\R^{1,1},\R^{1,1})/SO(1,1)$ will be achieved with the simultaneous reduction of the form $L_{[q]}$ and the vector $\mu_{[q]}\in\mathcal{H}_0.$ This is the aim of the second part of Section \ref{section sim red}.

\item $\bf f_q=0.$ In that case, $S_{\nu}=L_q(\nu)I$ for all $\nu\in\R^{1,1}.$ We define \[Q_3(\R^{1,1},\R^{1,1}):=\{q\in Q(\R^{1,1},\R^{1,1}):\Phi_q=0,f_q=0\}.\] Setting $P_3:=\R^{1,1}_*,$ the map 
\begin{eqnarray*}
\overline{\Theta}_3:\hspace{.5cm}SO(1,1)\backslash Q_3(\R^{1,1},\R^{1,1})/SO(1,1)&\longrightarrow& SO(1,1)\backslash P_3\\
{[q]} &\longmapsto& [L_{[q]}]
\end{eqnarray*}
is bijective, where the action of $SO(1,1)$ on $P_3$ is given by $g.L=L\circ g^{-1}$.
\end{enumerate}
We finally define the notions of quasi-umbilic and umbilic quadratic maps, which correspond to the last two cases considered above:
\begin{definition}\label{def quasi-umbilic}
A quadratic map $q:\R^{1,1}\rightarrow\R^{1,1}$ is said to be \emph{quasi-umbilic} if 
$$rank(f_q)=1\hspace{.5cm}\mbox{and}\hspace{.5cm}\Phi_q=0;$$ 
this equivalently means that $f_q(\R^{1,1})$ is a lightlike line in $\mathcal{S}.$ A quadratic map $q:\R^{1,1}\rightarrow\R^{1,1}$ is said to be \emph{umbilic} if $f_q=0.$
\end{definition}
\subsection{Invariants on the quotient set}
In this section, we define invariants on the quotient set $SO(1,1) \backslash Q(\R^{1,1},\R^{1,1})/SO(1,1)$ associated to $L_{[q]},Q_{[q]},A_{[q]}$ and $\Phi_{[q]}.$
\begin{definition}\label{def invariants} 
Let $q:\R^{1,1}\rightarrow\R^{1,1}$ be a quadratic map, and $[q]\in Q(\R^{1,1},\R^{1,1})/SO(1,1)$ its class up to the right action of $SO(1,1).$ We consider 
\begin{enumerate}
\item the vector $\vec{H}\in\R^{1,1}$ such that, for all $\nu\in\R^{1,1},$ $L_{[q]}(\nu)=\la\vec{H},\nu\ra,$ and its norm \[|\vec{H}|^2:=\la\vec{H},\vec{H}\ra;\]
\item the two real numbers \[K:=tr\hspace{0.04in}Q_{[q]}\hspace{1cm} \text{and}\hspace{1cm}\Delta:=\det Q_{[q]},\] where $tr\hspace{0.04in}Q_{[q]}$ and $\det Q_{[q]}$ are the trace and the determinant of the symmetric endomorphism of $\R^{1,1}$ associated to $Q_{[q]}$ by the metric $\la\cdot,\cdot\ra$ on $\R^{1,1};$
\item the real number $K_N$ such that 
\[A_{[q]}=\frac{1}{2}K_N\ \omega_0,\] 
where $\omega_0$ is the determinant in the canonical basis of $\R^{1,1}$ (the canonical area form on $\R^{1,1}$).
\end{enumerate}
\end{definition}

The numbers $|\vec{H}|^2,K,K_N$ and $\Delta$ are kept invariant by the left-action of $SO(1,1)$ on $[q]\in Q(\R^{1,1},\R^{1,1})/SO(1,1)$ and thus define invariants on the quotient set $SO(1,1)\backslash Q(\R^{1,1},\R^{1,1})/SO(1,1).$

\begin{remark}
When the element of the quotient is given by the second fundamental form of a Lorentzian surface in $\R^{2,2}$ (see Section \ref{section Gauss map}), $\vec{H},$ $K$ and $K_N$ correspond to the mean curvature vector, the Gauss curvature and the normal curvature of the surface; the invariant $\Delta$ is similar to the invariant $\Delta$ introduced in \cite{L} for surfaces in $\R^4.$ This is naturally the motivation for these definitions.
\end{remark}

\begin{remark}Let $U_{\Phi}$ be the symmetric endomorphism on $\R^{1,1}$ associated to the quadratic form $\Phi_{[q]}.$ Denoting by $\text{tr}\hspace{0.04in}\Phi_{[q]}$ and $\det \Phi_{[q]}$ its trace and its determinant, we have
\begin{equation}\label{tr det phi}
\text{tr}\hspace{0.04in}\Phi_{[q]}=|\vec{H}|^2-K\hspace{1cm}\text{and}\hspace{1cm}\det \Phi_{[q]}=\frac{1}{4}K_N^2.
\end{equation}
These formulas may be proved by direct computations using the very definitions of $\Phi_{[q]}$ and the invariants; they will be useful below.
\end{remark}

\subsection{The last simultaneous reductions}\label{section sim red}
Accordingly to the previous sections, we have to consider two cases:
\\
\\{\bf 1. Case $\Phi_{[q]}\neq 0$.} In this case, $[q]\in Q(\R^{1,1},\R^{1,1})/SO(1,1)$ is determined by $\Phi_{[q]}$ and $L_{[q]}$ (Lemma \ref{lem theta1}); we thus reduce the operator $U_{\Phi},$ together with the mean curvature vector $\vec{H}:$

\begin{proposition}\label{diag} $U_{\Phi}$ is diagonalizable if and only if $U_{\Phi}^0=0$ or $$(|\vec{H}|^2-K)^2-K_N^2>0.$$ In that last case, there is a unique orthonormal and positively oriented basis $(u_1,u_2)$ of $\R^{1,1}$ such that the matrix of $U_{\Phi}$ in $(u_1,u_2)$ is 
\begin{equation}\label{diag2} \begin{pmatrix}\a & 0 \\0 & \b\end{pmatrix} 
\end{equation} 
where
\begin{equation}\label{diagigual1}
\a:=\frac{|\vec{H}|^2-K\ \pm\sqrt{(|\vec{H}|^2-K)^2-K_N^2}}{2}
\end{equation}
and
\begin{equation}\label{diagigual2}
\b:=\frac{|\vec{H}|^2-K\ \mp\sqrt{(|\vec{H}|^2-K)^2-K_N^2}}{2}.
\end{equation}
Moreover, defining $\alpha,\beta\in\R$ such that $\vec{H}=\alpha u_1+\beta u_2,$ we have
\begin{equation}\label{inva1} 
\alpha^2=\frac{1}{\b-\a} \left(\a |\vec{H}|^2+\Delta-\frac{1}{4}K_N^2\right)
\end{equation}
and
\begin{equation}\label{inva2} 
\beta^2=\frac{1}{\b-\a}\left( \b |\vec{H}|^2+\Delta-\frac{1}{4}K_N^2\right).
\end{equation}
\end{proposition}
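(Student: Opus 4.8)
The plan is to pass everywhere to the traceless part. Put $U_{\Phi}^{0}:=U_{\Phi}-\tfrac12(\tr U_{\Phi})I$; this is a traceless $\la\cdot,\cdot\ra$-symmetric endomorphism of $\R^{1,1}$, hence an element of $\mathcal S$, and since subtracting a multiple of the identity does not affect diagonalizability, $U_{\Phi}$ is diagonalizable over $\R$ if and only if $U_{\Phi}^{0}$ is. By Remark \ref{form_cano} the latter happens exactly when $U_{\Phi}^{0}=0$ or $|U_{\Phi}^{0}|^{2}>0$. A short computation gives $|U_{\Phi}^{0}|^{2}=-\det U_{\Phi}^{0}=\tfrac14\big((\tr U_{\Phi})^{2}-4\det U_{\Phi}\big)$, and inserting the identities (\ref{tr det phi}) turns this into $|U_{\Phi}^{0}|^{2}=\tfrac14\big((|\vec H|^{2}-K)^{2}-K_{N}^{2}\big)$. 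This proves the first assertion.

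Assume now $(|\vec H|^{2}-K)^{2}-K_{N}^{2}>0$, so that $U_{\Phi}^{0}\in\mathcal S$ is nonzero and spacelike. By Remark \ref{form_cano} there is a positively oriented orthonormal basis in which $U_{\Phi}^{0}=\pm\sqrt{|U_{\Phi}^{0}|^{2}}\,E_{1}$; adding back $\tfrac12(\tr U_{\Phi})I=\tfrac12(|\vec H|^{2}-K)I$ and using the value of $|U_{\Phi}^{0}|^{2}$ found above, the matrix of $U_{\Phi}$ in this basis is exactly (\ref{diag2}) with $\a,\b$ as in (\ref{diagigual1})--(\ref{diagigual2}) (its diagonal entries are $\tfrac12(|\vec H|^{2}-K)\pm\tfrac12\sqrt{(|\vec H|^{2}-K)^{2}-K_{N}^{2}}$). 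Uniqueness I would get directly from the eigenspaces: the discriminant being positive, $U_{\Phi}$ has two distinct real eigenvalues, hence two orthogonal one-dimensional eigenlines spanning $\R^{1,1}$; since the metric has signature $(1,1)$ and a lightlike line is its own orthogonal, one eigenline is timelike and the other spacelike. In any positively oriented orthonormal basis $(u_{1},u_{2})$ in which $U_{\Phi}$ is diagonal, $u_{1}$ is timelike and therefore lies on the timelike eigenline, $u_{2}$ on the spacelike one, so $u_{1}$ is the unique future-directed unit eigenvector on the timelike eigenline and $u_{2}$ the unique unit eigenvector on the spacelike eigenline completing it to a positively oriented basis. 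This also determines which of the two numbers above is $\a$ and which is $\b$.

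For the last two identities the idea is to bring $L_{[q]}$ and $Q_{[q]}$ into play via the relation $Q_{[q]}=L_{[q]}^{2}-\Phi_{[q]}$. Passing to the symmetric endomorphisms of $\R^{1,1}$ associated by the metric $\la\cdot,\cdot\ra$, this reads $U_{Q}=U_{L^{2}}-U_{\Phi}$, where $U_{Q}$ is the endomorphism attached to $Q_{[q]}$ — so $\tr U_{Q}=K$ and $\det U_{Q}=\Delta$ by Definition \ref{def invariants} — and $U_{L^{2}}$ is the endomorphism $\nu\mapsto\la\vec H,\nu\ra\vec H$ representing the quadratic form $\nu\mapsto L_{[q]}(\nu)^{2}$. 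I would then expand in the basis $(u_{1},u_{2})$: writing $\vec H=\alpha u_{1}+\beta u_{2}$ and using $\la u_{1},u_{1}\ra=-1$, $\la u_{2},u_{2}\ra=1$, one finds $U_{L^{2}}(u_{1})=-\alpha\vec H$ and $U_{L^{2}}(u_{2})=\beta\vec H$, hence the matrix of $U_{L^{2}}$ in $(u_{1},u_{2})$; subtracting the matrix (\ref{diag2}) gives that of $U_{Q}$, and $\det U_{Q}=\Delta$ becomes $\b\alpha^{2}-\a\beta^{2}+\a\b=\Delta$. Combining this with $|\vec H|^{2}=\la\vec H,\vec H\ra=\beta^{2}-\alpha^{2}$ and with $\a\b=\det U_{\Phi}=\tfrac14K_{N}^{2}$, and dividing by $\b-\a\neq0$ (the two eigenvalues being distinct here), yields (\ref{inva1}) and (\ref{inva2}).

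All the computations involved are elementary $2\times2$ linear algebra; the only point that genuinely requires care is the uniqueness assertion, where one has to keep strict track of the orientation and time-orientation conventions fixed for $\R^{1,1}$ — in particular the fact that in a positively oriented orthonormal basis the first vector must be timelike, which is exactly what rigidifies both the choice of $(u_{1},u_{2})$ and the labelling of $\a$ and $\b$.
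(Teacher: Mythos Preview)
Your argument is correct and follows essentially the same route as the paper: reduce the diagonalizability question to the causal character of $U_{\Phi}^{0}\in\mathcal S$ via Remark~\ref{form_cano} and the identities~(\ref{tr det phi}), and obtain (\ref{inva1})--(\ref{inva2}) by writing the matrix of $U_{Q}=U_{L^{2}}-U_{\Phi}$ in the basis $(u_{1},u_{2})$ and using $\det U_{Q}=\Delta$ together with $|\vec H|^{2}=\beta^{2}-\alpha^{2}$ and $ab=\tfrac14K_{N}^{2}$. Your explicit treatment of the uniqueness claim (identifying which eigenline is timelike and hence must carry $u_{1}$) is a useful addition, as the paper leaves this implicit.
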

\begin{proof}
The first part of the proposition follows from the fact that $U_{\Phi}$ is diagonalizable if and only if $U_{\Phi}^0=0$ or $1/4(\mbox{tr}\ U_{\Phi})^2>\det U_{\Phi},$ together with (\ref{tr det phi}) ($U_{\Phi}^0$ is spacelike in $\mathcal{S},$ see Remark \ref{form_cano}). For the second part of the statement, we consider the quadratic form $Q=L^2-\Phi$ and its associated symmetric operator $U_{Q}:\R^{1,1}\rightarrow\R^{1,1};$ its matrix in $(u_1,u_2)$ is
 $$U_{Q}=\left(\begin{array}{cc}-\alpha^2-a&\alpha\beta\\-\alpha\beta&\beta^2-b\end{array}\right).$$
The formulas $\mbox{tr}\ U_{Q}=K,$ $\det U_{Q}=\Delta$ (Definition \ref{def invariants}) and (\ref{tr det phi}) easily give (\ref{inva1}) and (\ref{inva2}).
\end{proof}

\begin{proposition}\label{nodia}
$U_{\Phi}$ is not diagonalizable if and only if 
\begin{equation}\label{cond uphi nodiag}
U^0_{\Phi}\neq 0\hspace{.5cm}\mbox{ and }\hspace{.5cm}(|\vec{H}|^2-K)^2-K_N^2\leq 0,
\end{equation}
and we have:
\\\textbf{1.} if $(|\vec{H}|^2-K)^2-K_N^2<0,$ there is a unique orthonormal and positively oriented basis $(u_1,u_2)$ of $\R^{1,1}$ such that the matrix of $U_{\Phi}$ in $(u_1,u_2)$ is
\begin{equation}\label{nodiag1}
\frac{|\vec{H}|^2-K}{2}\begin{pmatrix}1 & 0\\0 & 1\end{pmatrix}\pm \frac{\sqrt{K_N^2-(|\vec{H}|^2-K)^2}}{2}\begin{pmatrix}0 & 1\\-1 & 0\end{pmatrix}. 
\end{equation} 
Writing $\vec{H}=\alpha u_1+\beta u_2,$ we have
\begin{equation}\label{reltiemp}
\alpha^2=\frac{1}{2}\left( -|\vec{H}|^2+ \sqrt{|\vec{H}|^4+4u^2} \right),\hspace{1cm} \beta^2=\frac{1}{2}\left(|\vec{H}|^2+ \sqrt{|\vec{H}|^4+4u^2} \right),
\end{equation}
where 
$$u=\frac{1}{\sqrt{K_N^2-(|\vec{H}|^2-K)^2}}\left(-\Delta+\frac{1}{4}K_N^2-\frac{1}{2}|\vec{H}|^2(|\vec{H}|^2-K)\right).$$
\textbf{2.} if $(|\vec{H}|^2-K)^2-K_N^2=0,$ there is a unique orthonormal and positively oriented basis $(u_1,u_2)$ of $\R^{1,1}$ such that the matrix of $U_{\Phi}$ in $(u_1,u_2)$ is
\begin{equation}\label{uphi epsilons}
\frac{|\vec{H}|^2-K}{2}\begin{pmatrix}1 & 0 \\0 & 1\end{pmatrix}+ \begin{pmatrix}\varepsilon_1 & \varepsilon_2 \\-\varepsilon_2 & -\varepsilon_1\end{pmatrix}\hspace{1cm}
\end{equation}
where $\varepsilon_1=\pm 1,$ $\varepsilon_2=\pm 1.$ Writing $\vec{H}=\alpha u_1+\beta u_2$, we have
\begin{equation}\label{alpha beta timelike}
\alpha^2=\frac{\varepsilon_1}{4} \dfrac{\left(|\vec{H}|^2-\varepsilon_1v\right)^2}{v},\hspace{1cm} \beta^2=\frac{\varepsilon_1}{4} \dfrac{\left(|\vec{H}|^2+\varepsilon_1v\right)^2}{v},
\end{equation}
if $\displaystyle{v:=-\Delta+\frac{K_N^2}{4}-\frac{1}{2}|\vec{H}|^2(|\vec{H}|^2-K)}$ is not 0. Moreover, $v=0$ if and only if $|\H|^2=0;$ in that case,
\begin{equation}\label{invariants phi H lightlike}
\Delta=\frac{1}{4}K_N^2=\frac{1}{4}K^2,\hspace{1cm}|\H|^2=0
\end{equation}
and
\begin{equation}\label{def invariants Phi neq 0}
\H=\alpha u_1+\beta u_2,\hspace{1cm}\mbox{with}\hspace{.3cm} \alpha=\pm \beta
\end{equation}
defines new invariants $\alpha,\beta.$
\end{proposition}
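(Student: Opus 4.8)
The plan is to reduce everything to the causal type of the traceless symmetric operator $U_\Phi^0:=U_\Phi-\frac{1}{2}(\tr U_\Phi)I\in\mathcal S$ and then to recover $\vec H$ in the resulting canonical basis by comparing the operators of $Q_{[q]}$, $\Phi_{[q]}$ and $L_{[q]}^2$. First I would record the basic identity: by (\ref{tr det phi}) one has $\tr U_\Phi=|\vec H|^2-K$ and $\det U_\Phi=\frac{1}{4}K_N^2$, so with $c:=\frac{1}{2}(|\vec H|^2-K)$ we get $U_\Phi=U_\Phi^0+cI$, hence $\det U_\Phi=\det U_\Phi^0+c^2$, whence
\[
|U_\Phi^0|^2=-\det U_\Phi^0=c^2-\det U_\Phi=\frac{1}{4}\big((|\vec H|^2-K)^2-K_N^2\big).
\]
By Remark~\ref{form_cano}, $U_\Phi$ is diagonalizable in an orthonormal basis exactly when $U_\Phi^0=0$ or $U_\Phi^0$ is spacelike, i.e.\ $|U_\Phi^0|^2>0$; this is Proposition~\ref{diag}, and its contrapositive gives the first assertion, namely (\ref{cond uphi nodiag}) — the strict inequality corresponding to $U_\Phi^0$ timelike, the equality to $U_\Phi^0$ lightlike and nonzero.

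For part \textbf{1} ($U_\Phi^0$ timelike), I would apply (\ref{reduction timelike}) to obtain a positively oriented orthonormal basis $(u_1,u_2)$ in which $U_\Phi^0=\pm\sqrt{-|U_\Phi^0|^2}\,E_2=\pm\frac{1}{2}\sqrt{K_N^2-(|\vec H|^2-K)^2}\,E_2$; adding $cI$ is exactly (\ref{nodiag1}). Uniqueness follows as in the proof of Lemma~\ref{vec_dist}: two positively oriented orthonormal bases differ by some $R_\psi\in SO(1,1)$, under which the matrix of $U_\Phi^0$ in $(E_1,E_2)$ undergoes the corresponding Lorentzian rotation of $\mathcal S$, and a nonzero multiple of $E_2$ is fixed only for $\psi=0$. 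For (\ref{reltiemp}), write $\vec H=\alpha u_1+\beta u_2$; since $u_1$ is timelike and $u_2$ spacelike, $|\vec H|^2=\beta^2-\alpha^2$. Expressing $L_{[q]}(\nu)=\la\vec H,\nu\ra$ in $(u_1,u_2)$ gives $U_{L_{[q]}^2}=\begin{pmatrix}-\alpha^2&\alpha\beta\\-\alpha\beta&\beta^2\end{pmatrix}$, so $U_Q=U_{L_{[q]}^2}-U_\Phi$; the identity $\tr U_Q=K$ is automatic, while $\det U_Q=\Delta$, after using $c^2+\frac{1}{4}\big(K_N^2-(|\vec H|^2-K)^2\big)=\frac{1}{4}K_N^2$, reduces to $\alpha^2\beta^2=u^2$. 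Solving the system $\beta^2-\alpha^2=|\vec H|^2$, $\alpha^2\beta^2=u^2$ and retaining the non-negative roots yields (\ref{reltiemp}).

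For part \textbf{2} ($U_\Phi^0$ lightlike and nonzero), (\ref{reduction lightlike}) puts $U_\Phi^0$ in the form $\pm N_i$ in some positively oriented orthonormal basis; since the Lorentzian rotations of $\mathcal S$ scale lightlike elements by $e^{\pm2\psi}$ and so act transitively on each lightlike half-line, one may further rescale the basis so that $U_\Phi^0=\pm2N_i=\varepsilon_1E_1+\varepsilon_2E_2$ with $\varepsilon_1,\varepsilon_2=\pm1$, and this normalization determines $(u_1,u_2)$ uniquely by the same fixed-point argument; adding $cI$ gives (\ref{uphi epsilons}), with $\det U_\Phi=c^2=\frac{1}{4}K_N^2$. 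Writing $\vec H=\alpha u_1+\beta u_2$ and $|\vec H|^2=\beta^2-\alpha^2$ as before, $\det U_Q=\Delta$ now simplifies to $\varepsilon_1(\alpha^2+\beta^2)+2\varepsilon_2\alpha\beta=v$, that is $(\alpha+\varepsilon_1\varepsilon_2\beta)^2=\varepsilon_1 v$; together with $\beta^2-\alpha^2=|\vec H|^2$ and $(\alpha\beta)^2=\alpha^2\beta^2$ this determines $\alpha^2$ and $\beta^2$, and solving explicitly for $v\neq0$ produces (\ref{alpha beta timelike}). From $(\alpha+\varepsilon_1\varepsilon_2\beta)^2=\varepsilon_1 v$ one reads off that $v=0$ forces $\alpha=-\varepsilon_1\varepsilon_2\beta$, hence $\alpha^2=\beta^2$ and $|\vec H|^2=0$; substituting $\alpha=-\varepsilon_1\varepsilon_2\beta$ back into $\det U_Q$ gives $\Delta=c^2=\frac{1}{4}K_N^2$, which with $\frac{1}{4}K_N^2=\frac{1}{4}(|\vec H|^2-K)^2=\frac{1}{4}K^2$ (since $|\vec H|^2=0$) is (\ref{invariants phi H lightlike}); and since only $\alpha^2=\beta^2$ is then fixed while its common value is not pinned down by $|\vec H|^2,K,K_N,\Delta$, the pair $(\alpha,\beta)$ of (\ref{def invariants Phi neq 0}) is a genuinely new invariant.

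The step I expect to be the main obstacle is the converse in ``$v=0$ if and only if $|\vec H|^2=0$'': the implication $v=0\Rightarrow|\vec H|^2=0$ is immediate from $(\alpha+\varepsilon_1\varepsilon_2\beta)^2=\varepsilon_1 v$, but the reverse requires a sign analysis relating the lightlike direction singled out by $\vec H$ to the one singled out by $U_\Phi^0$ in the canonical basis — equivalently, showing that the sign pattern $\varepsilon_1\varepsilon_2$ is forced to match the sign of $\alpha/\beta$ when $\vec H$ is lightlike — and this is the genuinely delicate point. The uniqueness assertions themselves rest only on the routine fact, already used for Lemma~\ref{vec_dist}, that conjugation by $SO(1,1)$ acts on $\mathcal S\cong\R^{1,1}$ by Lorentzian rotations.
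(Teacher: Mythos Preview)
Your approach is essentially the paper's: reduce $U_\Phi^0$ via Remark~\ref{form_cano} according to its causal type, then recover $\alpha,\beta$ from $\det U_Q=\Delta$ exactly as in the proof of Proposition~\ref{diag}. In particular, the paper derives the same key identity
\[
v=\varepsilon_1(\alpha^2+\beta^2)+2\varepsilon_2\alpha\beta=\varepsilon_1(\alpha+\varepsilon_1\varepsilon_2\beta)^2
\]
and argues the equivalence $v=0\iff|\vec H|^2=0$ directly from it.

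Your reservation about the converse is well placed and is in fact a point the paper's own proof does not address beyond the one-line assertion. From $v=\varepsilon_1(\alpha+\varepsilon_1\varepsilon_2\beta)^2$ one gets $v=0\Rightarrow\alpha=-\varepsilon_1\varepsilon_2\beta\Rightarrow|\vec H|^2=0$ immediately; but $|\vec H|^2=0$ only gives $\alpha=\pm\beta$, and the ``wrong'' sign $\alpha=\varepsilon_1\varepsilon_2\beta$ yields $v=4\varepsilon_1\alpha^2$, which need not vanish. Nothing in the construction forces the lightlike direction of $\vec H$ to match the one selected by $U_\Phi^0$ through $\varepsilon_1\varepsilon_2$, so the converse as stated requires an extra argument (or a slight reformulation) that neither you nor the paper supplies. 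In short: your proof sketch is faithful to the paper's, and the gap you identify is a genuine soft spot in the paper's argument as well, not a defect peculiar to your write-up.
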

\begin{proof}
\textbf{\textit{ 1.}} In that case $U_{\Phi}^0$ is timelike in $\mathcal{S},$ and its reduction is given by (\ref{reduction timelike}) in Remark \ref{form_cano}, which proves (\ref{nodiag1}). Formulas (\ref{reltiemp}) may then be proved as formulas (\ref{inva1}) and (\ref{inva2}) in Proposition \ref{diag} above.
\\\textbf{\textit{2.}} Here $U_{\Phi}^0$ is lightlike in $\mathcal{S},$ and its reduction is given by (\ref{reduction lightlike}) in Remark \ref{form_cano}, which proves (\ref{uphi epsilons}). We also get formulas (\ref{alpha beta timelike}) as in Proposition \ref{diag} above. Further, computing $\Delta=\det U_{Q}$ as in the proof of Proposition \ref{diag}, with $U_{\Phi}$ given here by (\ref{uphi epsilons}), we may easily get
$$v=\varepsilon_1(\alpha^2+\beta^2)+2\varepsilon_2\alpha\beta.$$
Thus $v=0$ if and only if $\alpha=\pm\beta,$ i.e. $|\H|^2=0;$ formulas (\ref{invariants phi H lightlike}) then easily follow.  
\end{proof}
\noindent {\bf 2. Case $\Phi_{[q]}=0$.} In this case, and if $f_q\neq 0,$ $[q]\in Q(\R^{1,1},\R^{1,1})/SO(1,1)$ is determined by the form $L_{[q]}$ together with the vector $\mu_{[q]}$ (Lemma \ref{vec_dist}), and we need to simultaneously reduce $L_{[q]}$ and $\mu_{[q]}.$ We recall that $\mu_{[q]}$ is normalized so that $|\mu_{[q]}|^2=\pm1,$ or $\mu_{[q]}=\frac{1}{2}(\pm1,\pm1),$ and we define the vector 
\begin{equation}
\mu_{[q]}^*:=\begin{cases}\mu_{[q]}^\perp,\hspace{0.2in}\text{if}\hspace{0.2in}|\mu_{[q]}|^2=\pm1 \\ \mu_{[q]}',\hspace{0.2in}\text{if}\hspace{0.2in}\mu_{[q]}=\frac{1}{2}(\pm1,\pm1)
\end{cases} 
\end{equation}where $\mu_{[q]}^\perp$ denotes the reflection of $\mu_{[q]}$ with respect to the principal diagonal of $\R^{1,1}$ in the first case, and $\mu_{[q]}'$ the unique lightlike vector such that $\la\mu_{[q]},\mu_{[q]}'\ra=\frac{1}{2}$ in the second case. Now $(\mu_{[q]},\mu^*_{[q]})$ is a basis of $\R^{1,1},$ and we define $\alpha$ and $\beta$ such that
\begin{equation}\label{nuev_inva}
\vec{H}=\alpha \mu_{[q]}+\beta \mu_{[q]}^*.
\end{equation}  
The numbers $\alpha$ and $\beta$ are new invariants. We will give an interpretation of these invariants in Section \ref{section Gauss map} below.

\subsection{The classification}
We gather the results obtained in the previous sections and give the classification of the quadratic maps $\R^{1,1}\rightarrow\R^{1,1}$ in terms of their numerical invariants. For sake of simplicity, we will say that a set of invariants \emph{essentially} determines a class in $SO(1,1) \backslash Q(\R^{1,1},\R^{1,1})/SO(1,1)$ if it completely determines a \emph{finite number} of classes (corresponding to choices of signs in the formulas given in the previous sections). 
\begin{theorem}\label{th classification}
The class $[q]\in SO(1,1) \backslash Q(\R^{1,1},\R^{1,1})/SO(1,1)$ is determined by its invariants in the following way:
	\begin{enumerate}
	\item If $\Phi\neq 0$ then the following holds:
		\begin{enumerate}
	\item if $(|\vec{H}|^2-K)^2-K_N^2\neq 0,$ the invariants $K,K_N,|\vec{H}|^2,\Delta$ essentially determine $[q];$
	\item if $(|\vec{H}|^2-K)^2-K_N^2=0,$ then we have:
	\begin{enumerate}
	\item  if $|\vec{H}|^2\neq 0,$ the invariants $K,|\vec{H}|^2,\Delta$ essentially determine $[q];$
	\item  if $|\vec{H}|^2=0,$ the invariant $K$ together with the new invariants $\alpha,\beta$ defined in (\ref{def invariants Phi neq 0}) essentially determine $[q].$
	\end{enumerate}
\end{enumerate}
\item If $\Phi= 0$ then $K_N=0,$ $|\vec{H}|^2-K=0$ and $\Delta=0,$ and we have the following:
			\begin{enumerate}
				\item if $f_q\neq 0,$ the invariants $\alpha$ and $\beta$ defined in (\ref{nuev_inva}) essentially determine $[q]$ ($q$ is \emph{quasi-umbilic});
				\item if $f_q=0,$ then $|\vec{H}|^2$ determine $[q]$ ($q$ is \emph{umbilic}).
				\end{enumerate}
	\end{enumerate}
\end{theorem}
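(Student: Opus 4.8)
The plan is to assemble Theorem \ref{th classification} directly from the structural results already established, treating it as a bookkeeping exercise that organizes the three-case reduction of Section \ref{section reduction quadratic map} together with the simultaneous reductions of Section \ref{section sim red}. First I would observe that the trichotomy $\mathrm{rank}(f_q)=2$, $\mathrm{rank}(f_q)=1$, $f_q=0$ is governed by the pair $(\Phi_q, f_q)$ via Lemma \ref{def Phi} (which forces the signature of $\Phi_q$ to be $(r,s)$ with $0\le r,s\le 1$), and that $Q(\R^{1,1},\R^{1,1})$ is partitioned as $Q_1\cup Q_2\cup Q_3$. Since the invariants $|\vec H|^2, K, K_N, \Delta, \alpha, \beta$ are all $SO(1,1)$-bi-invariant (Definition \ref{def invariants} and the equivariance remarks), it suffices to identify, in each stratum, which invariants pin down the $SO(1,1)\times SO(1,1)$-orbit up to finitely many sign choices.

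Next I would handle the case $\Phi\neq 0$, i.e. $[q]\in Q_1$. By Lemma \ref{lem theta1} the class $[q]$ is determined, two-to-one, by the triple $(L_{[q]},\Phi_{[q]},A_{[q]})$, and by (\ref{rela_formas}) the form $A_{[q]}$ is recoverable from $\Phi_{[q]}$ up to sign; so what remains is the simultaneous $SO(1,1)$-reduction of $L_{[q]}$ (equivalently $\vec H$) and $\Phi_{[q]}$ (equivalently $U_\Phi$). This is exactly the content of Propositions \ref{diag} and \ref{nodia}, keyed on the causal character of $U_\Phi^0\in\mathcal S$, which by Remark \ref{form_cano} is detected by the sign of $|U_\Phi^0|^2=-\det U_\Phi^0$; a short computation with (\ref{tr det phi}) shows this sign equals that of $(|\vec H|^2-K)^2-K_N^2$. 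Case 1(a) is then Proposition \ref{diag} (when the quantity is positive) or Proposition \ref{nodia}.1 (when negative): the basis $(u_1,u_2)$ is unique, $\alpha^2,\beta^2$ are expressed via $K, K_N, |\vec H|^2, \Delta$ by (\ref{inva1})--(\ref{inva2}) or (\ref{reltiemp}), so these four invariants essentially determine $[q]$. Case 1(b) is Proposition \ref{nodia}.2: when the quantity vanishes, $U_\Phi^0$ is lightlike, and (\ref{alpha beta timelike}) expresses $\alpha^2,\beta^2$ in terms of $K, |\vec H|^2, \Delta$ provided $v\neq 0$; since $v=0$ iff $|\vec H|^2=0$, the sub-split 1(b)(i)/(ii) is forced, and in the degenerate branch (\ref{invariants phi H lightlike})--(\ref{def invariants Phi neq 0}) show $\Delta=\tfrac14 K_N^2=\tfrac14 K^2$, $|\vec H|^2=0$, so that $K$ plus the genuinely new invariants $\alpha,\beta$ (with $\alpha=\pm\beta$) are what survive.

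Finally I would dispatch the case $\Phi=0$. From (\ref{tr det phi}) the identity $\Phi\equiv 0$ gives immediately $|\vec H|^2-K=\mathrm{tr}\,\Phi_{[q]}=0$ and $\tfrac14 K_N^2=\det\Phi_{[q]}=0$, hence $K_N=0$; and $Q_{[q]}=L_{[q]}^2-\Phi_{[q]}=L_{[q]}^2$ has rank at most one, so $\Delta=\det Q_{[q]}=0$. If $f_q\neq 0$ we are in $Q_2$: Lemma \ref{des_vec} says $[q]$ is determined two-to-one by $(L_{[q]},\mu_{[q]})\in P_2$, and the simultaneous reduction (\ref{nuev_inva}) writes $\vec H=\alpha\mu_{[q]}+\beta\mu_{[q]}^*$, so $\alpha,\beta$ essentially determine $[q]$ and $q$ is quasi-umbilic by Definition \ref{def quasi-umbilic}. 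If $f_q=0$ we are in $Q_3$: here $S_\nu=L_q(\nu)I$ for all $\nu$, so $[q]$ depends only on $[L_{[q]}]$, which via $\overline\Theta_3$ being bijective is the class of $\vec H$ under $SO(1,1)$; that class is recorded precisely by $|\vec H|^2$ (the only $SO(1,1)$-invariant of a vector), so $|\vec H|^2$ determines $[q]$ and $q$ is umbilic. The only real subtlety — and the step I expect to need the most care — is the uniform treatment of the lightlike sub-cases: verifying that in branch 1(b)(ii) the quantities $\alpha,\beta$ are genuinely new (not functions of the four classical invariants) while in the other branches they are not, and checking that the sign ambiguities in Propositions \ref{diag}--\ref{nodia} together with the two-to-one maps $\Theta_1,\Theta_2$ indeed amount only to a finite set of classes, so that ``essentially determines'' is the correct and sharp statement.
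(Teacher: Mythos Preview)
Your proposal is correct and follows essentially the same approach as the paper: reduce to the maps $\overline\Theta_1,\overline\Theta_2,\overline\Theta_3$, then invoke Propositions \ref{diag} and \ref{nodia} (together with (\ref{nuev_inva})) to express the remaining data in terms of the listed invariants up to finitely many sign choices. The paper's own proof only writes out the sub-case $(|\vec H|^2-K)^2-K_N^2>0$ in detail and declares the rest ``very similar,'' so your treatment is in fact more thorough than the original.
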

\begin{proof} 
We only consider the case $(|\vec{H}|^2-K)^2-K_N^2>0$ since the proofs in the other cases are very similar. Recall first the definition of $\overline{\Theta}_1$ in (\ref{def theta1b}), Section \ref{section reduction quadratic map}.  By Proposition \ref{diag}, $\overline{\Theta}_1([q])$ is the class of $(L,\Phi,A)\in P_1$ where the forms $L,\Phi$ and $A$ are defined \emph{in the canonical basis} $(u_1,u_2)$ of $\R^{1,1}$ by
$$L=(\alpha,\beta),\hspace{1cm}\Phi=\left(\begin{array}{cc} a&0\\0&b\end{array}\right)\hspace{.5cm}\mbox{and}\hspace{.5cm}A=\frac{1}{2}K_N\left(\begin{array}{cc}0&1\\-1&0\end{array}\right),$$
with $a,b,\alpha$ and $\beta$ satisfying (\ref{diagigual1})-(\ref{inva2}) (more precisely, recalling (\ref{action P1}), if $g\in SO(1,1)$ is such that $g(u_1) = \tilde{u}_1,$ $g(u_2) = \tilde{u}_2,$ where $(\tilde{u}_1, \tilde{u}_2)$ is the basis given by Proposition \ref{diag}, we have $g.(L,\Phi, A) = (L_q, \Phi_q, A_q)$). Since we can choose a sign in the definitions (\ref{diagigual1}) and (\ref{diagigual2}) of $a$ and $b,$ and since $\alpha$ and $\beta$ are determined up to sign by (\ref{inva1}) and (\ref{inva2}), sixteen classes correspond to the given set of invariants (two classes correspond to each one of the eight possible choices for $a,b, \alpha$ and $\beta$ since the map $\overline{\Theta}_1$ in (\ref{def theta1b}) is two-to-one). 
\end{proof}

\section{The curvature hyperbola of $q:\R^{1,1}\rightarrow\R^{1,1}$}\label{section curvature hyperbola}

In this section we describe the geometric properties of the curvature hyperbola associated to a quadratic map in terms of its invariants. \textit{The curvature hyperbola} $\mathcal{H}$ associated to $q:\R^{1,1}\to \R^{1,1}$ is defined as the subset of $\R^{1,1}$ \[\mathcal{H}:=\left\lbrace  \frac{q(v)}{|v|^2}:\ v\in \mathbb{R}^{1,1}, |v|^2=\pm 1\right\rbrace ;\] 
this is the natural analog of the curvature ellipse associated to a quadratic map $\R^2\rightarrow\R^2,$ where $\R^2$ is the Euclidian plane. Denoting by $\mathcal{O}$ the origin of $\R^{1,1},$ the center of $\mathcal{H}$ is the point $\mathcal{C}$ such that $\stackrel{\longrightarrow}{\mathcal{O}\mathcal{C}}=\vec{H}$ ($\vec{H}$ is the mean curvature vector of $q,$ see Definition \ref{def invariants}). We will say that a point $\mathcal{P}$ of the hyperbola is spacelike (resp. timelike) if the vector $\stackrel{\longrightarrow}{\mathcal{C}\mathcal{P}}$ is a spacelike (resp. timelike) vector of $\R^{1,1}.$ Generically, the curvature hyperbola is given as follows:
\begin{center}
\setlength{\unitlength}{0.8mm}
\begin{picture}(50,50)
\put(5,15){\line(2,1){40}}
\put(35,5){\line(-1,2){20}}
\thicklines
\qbezier(35,6)(25,25)(44,34)
\qbezier(5,15.5)(25,25)(15,44)
\put(25,25){\color{white}\circle*{1}}
\put(25,25){\circle{1}}
\put(23,17){$\mathcal{C}$}
\put(13,12){$\mathcal{O}$}
\put(13,15){\circle*{1}}
\end{picture}
\end{center}
We have the following descriptions of the hyperbola:
\begin{proposition}\label{hiperbola_no_degenerada}If $K_N\neq 0,$ the curvature hyperbola $\mathcal{H}$ is not degenerate, and the following holds:
\\
\\1- if $U_{\Phi}^0=0,$ the curvature hyperbola is 
$$\mathcal{H}=\left\{\H+\nu:\ |\nu|^2=\frac{|\H|^2-K}{2}\right\};$$ 
its asymptotes are two null lines in $\R^{1,1};$ 
\\
\\2- if $U_{\Phi}^0\neq 0$ and $U_{\Phi}$ is diagonalizable with eigenvalues $\a$ and $\b$ given by Proposition \ref{diag}, the axes of $\mathcal{H}$ are directed by the eigenvectors $u_1$ and $u_2$ of $U_{\Phi},$  and,  its equation in $(\vec{H},u_1,u_2)$ is					
$$\frac{\nu_2^2}{\b}-\frac{\nu_1^2}{\a}=1;$$
if $a>b$ (resp. $a<b$), its asymptotes are timelike (resp. spacelike) lines, and moreover, the hyperbola contains timelike and spacelike points (resp. contains only spacelike points) if $\a,\b>0$, and contains only timelike points (resp. contains timelike and spacelike points) if $\a,\b<0$;   
\\
\\3- if $U_{\Phi}$ is not diagonalizable, we have two cases which correspond to the cases in Proposition \ref{nodia}:
\\
\\a- if $U_{\Phi}^0$ is timelike (in $\mathcal{S}$), the equation of $\mathcal{H}$ is
			$$-\frac{2(|\vec{H}|^2-K)}{K_N^2}\nu_1^2\mp 4\frac{\sqrt{K_N^2-(|\vec{H}|^2-K)^2}}{K_N^2} \nu_1\nu_2+\frac{2(|\vec{H}|^2-K)}{K_N^2}\nu_2^2=1;$$
			one of the asymptotes is timelike and the other one is spacelike, and the hyperbola contains timelike and spacelike points;
\\
\\b- if $U_{\Phi}^0$ is lightlike (in $\mathcal{S}$), the equation of $\mathcal{H}$ is 
			$$\frac{1-\a}{\a^2}\nu_1^2+\frac{2\varepsilon}{\a^2}\nu_1\nu_2+\frac{1+\a}{\a^2}\nu_2^2=1\hspace{.5cm}\mbox{or}\hspace{.5cm}-\frac{1+\a}{\a^2}\nu_1^2+\frac{2\varepsilon}{\a^2}\nu_1\nu_2-\frac{1-\a}{\a^2}\nu_2^2=1$$ 
			where $\a=\frac{|\vec{H}|^2-K}{2}$ and $\varepsilon=\pm 1.$ If $\mathcal{H}$ is given by the first equation (resp. the second equation), it has a lightlike asymptote, which is the line $\nu_2=-\varepsilon\nu_1$ (resp. the line $\nu_2=\varepsilon\nu_1$); its other asymptote is timelike (resp. spacelike) if $\a>0,$ and is spacelike (resp. timelike) if $\a<0;$ moreover, $\mathcal{H}$ contains timelike and spacelike points (resp. only spacelike points, or only timelike points).
\end{proposition}

We omit the proof, which is quite long and elementary.

\begin{remark} If $K_N\neq 0,$ the function $U_{\Phi}$ is invertible, and we may define $\Phi^*(\nu):=\la\nu,U_{\Phi}^{-1}\nu\ra.$ It turns out that the function $\Phi^*:\R^{1,1}\to \R$ then furnishes an intrinsic equation of the curvature hyperbola: for all $\nu\in\R^{1,1},$ \[\vec{H}+\nu\ \in\mathcal{H}\hspace{0.2in}\text{if and only if}\hspace{0.2in}\Phi^*(\nu)=1.\]
This gives an efficient device to write down the equation of the curvature hyperbola in specific cases, since $U_{\Phi}$ may be easily written in terms of the second fundamental form.
\end{remark}

We also describe the curvature hyperbola in the degenerate case ($K_N=0$). Here again, for sake of brevity we omit the proofs.

\begin{proposition}\label{imagen_fq_1} If $K_N=0$ and $\Phi_q\neq 0,$ we have two possibilities: 
\\
\\1- the image of $f_q$ is a spacelike line; in this case the hyperbola degenerates to the union of two half-lines: 
\\
\\(a) if $|\vec{H}|^2-K\neq 0,$ the hyperbola is 
 		\[ \left\lbrace \vec{H}\pm\lambda\sqrt{|\vec{H}|^2-K}_{}u_2,\ 1\leq\lambda<+\infty\right\rbrace\hspace{.3cm}\text{or}\hspace{.3cm}\left\lbrace \vec{H}\pm\lambda\sqrt{K-|\vec{H}|^2}_{}u_1,\ 1\leq\lambda<+\infty \right\rbrace,\]
		depending on the sign of $|\vec{H}|^2-K;$ 
\\
\\(b) if $|\vec{H}|^2-K=0,$ the hyperbola is 
 		\[ \left\lbrace  \vec{H}\pm\lambda(u_1+u_2),\ 1\leq\lambda<+\infty \right\rbrace \hspace{.3cm}\text{or} \hspace{.3cm} \left\lbrace \vec{H}\pm\lambda(u_1-u_2),\ 1\leq\lambda<+\infty \right\rbrace;\]
		this occurs when $U_{\Phi}$ is given by (\ref{uphi epsilons}) with $\varepsilon_1=-1;$
\\			
\\2- the image of $f_q$ is a timelike line; in that case the hyperbola degenerates to a straight line:
\\
\\(a) if $|\vec{H}|^2-K\neq 0,$ the hyperbola is 
$$\H+\R u_1\hspace{1cm}\mbox{or}\hspace{1cm}\H+\R u_2,$$
where the first case occurs if $|\vec{H}|^2-K>0$ and the second case if $|\vec{H}|^2-K<0;$
\\
\\(b) if $|\vec{H}|^2-K=0,$ the hyperbola is 
$$\H+\R (u_1-u_2)\hspace{1cm}\mbox{or}\hspace{1cm}\H+\R (u_1+u_2);$$
this occurs when $U_{\Phi}$ is given by (\ref{uphi epsilons}) with $\varepsilon_1=1.$
\\
\\For both cases 1 and 2, the case $(a)$ corresponds to $U_{\Phi}$ diagonalizable and the case $(b)$ to $U_{\Phi}$ non diagonalizable, and the basis $(u_1,u_2)$ is given by Proposition \ref{diag} and Proposition \ref{nodia} respectively. We moreover note that $\Delta\geq 0$ in the case 1, and that  $\Delta\leq 0$ in the case 2. 
\end{proposition}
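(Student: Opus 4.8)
The plan is to reduce everything to the canonical forms for $S_\nu$ established in Section \ref{section reduction quadratic map} and then compute the set $\{q(v)/|v|^2 : |v|^2 = \pm 1\}$ directly. Since $K_N = 0$ means $A_q = 0$, while $\Phi_q \neq 0$ means $\mathrm{rank}(f_q) \geq 1$; combined with $\det\Phi_q = \frac14 K_N^2 = 0$ (by \eqref{tr det phi}), the form $\Phi_q$ has signature $(1,0)$ or $(0,1)$, so in fact $\mathrm{rank}(f_q) = 1$ is impossible only when $\Phi_q = 0$ — here $\mathrm{rank}(f_q)$ could still be $2$, but with $\Phi_q$ degenerate and $A_q = 0$; consulting \eqref{rela_formas}, $\tilde\Phi_q(\nu_1,\nu_2)^2 = \Phi_q(\nu_1)\Phi_q(\nu_2)$, which forces $U_\Phi^0$ to be spacelike, timelike, or lightlike in $\mathcal S$, i.e.\ exactly the trichotomy of Remark \ref{form_cano}. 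The two cases "image of $f_q$ spacelike/timelike" correspond to whether $U_\Phi$ is the symmetric operator of a form that is positive (resp.\ negative) semidefinite. First I would fix the orthonormal positively oriented basis $(u_1,u_2)$ furnished by Proposition \ref{diag} (case $U_\Phi$ diagonalizable) or Proposition \ref{nodia} case 2 (case $U_\Phi$ non-diagonalizable, giving \eqref{uphi epsilons}), and write out the matrix of $S_\nu$ in that basis using the canonical-form table.

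The computational heart is this: for $v = (\cosh t, \sinh t)$ (a unit spacelike vector) one has $\langle q(v), \nu\rangle = \langle S_\nu v, v\rangle$; writing $S_\nu = L_q(\nu) I + S_\nu^0$ and using $\langle I v, v\rangle = |v|^2 = 1$, we get $q(v) = \vec H + w(t)$ where $w(t)$ is the vector dual to $\nu \mapsto \langle S_\nu^0 v, v\rangle$. Because $S_\nu^0 = f_q(\nu)$ takes values in a one-dimensional subspace of $\mathcal S$ (when $\mathrm{rank}(f_q)=1$) or because the relevant quadratic form in $v$ degenerates, the curve $t \mapsto w(t)$ sweeps out only part of a line or a full line. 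Concretely, in the diagonalizable case the table gives $S_\nu^0 = \pm\tilde\Phi_q(\nu_0,\nu) E_1$ (if $\Phi_q$ has signature $(1,0)$) or $\pm\tilde\Phi_q(\nu_0,\nu) E_2$ (signature $(0,1)$); evaluating $\langle E_1 v, v\rangle = \cosh 2t$ (unbounded, $\geq 1$ in absolute value) versus $\langle E_2 v, v\rangle = 0$ on spacelike $v$ and $= \pm 1$... — wait, more carefully: $\langle E_2 v, v\rangle$ vanishes identically on any $v$ since $E_2$ is skew-adjoint-like; one must instead track the timelike unit vectors $v = (\sinh t, \cosh t)$ too, where the roles flip. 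Running both families $|v|^2 = +1$ and $|v|^2 = -1$ and collecting the union is what produces the "two half-lines" in case 1 versus the "full line" in case 2: when $U_\Phi^0$ is spacelike the scalar coefficient is of the form $\pm\cosh(2t - c)$, whose range is $(-\infty,-1]\cup[1,\infty)$, giving two half-lines; when $U_\Phi^0$ is timelike the coefficient runs over all of $\R$, giving the whole line. The lightlike case \eqref{uphi epsilons} must be handled separately and yields case 1(b) or 2(b) according to $\varepsilon_1 = -1$ or $+1$, since the sign of $\varepsilon_1$ controls whether the relevant coefficient is $\pm(\text{something} \geq 1)$ or ranges over $\R$; I would verify this by substituting \eqref{uphi epsilons} and \eqref{alpha beta timelike} and simplifying.

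Finally, for the sign claims on $\Delta$: using $\Delta = \det U_Q$ with $U_Q = L_q^{\otimes 2}$-operator $- U_\Phi$ as in the proof of Proposition \ref{diag}, and the fact that in case 1 the nonzero eigenvalue of $U_\Phi$ is positive (spacelike image, signature $(1,0)$) while in case 2 it is negative, one reads off $\Delta \geq 0$ resp.\ $\Delta \leq 0$ from the determinant formula; the boundary cases $\Delta = 0$ match the degenerations in (b). The main obstacle I anticipate is bookkeeping: keeping straight the four sign ambiguities ($\pm$ in the canonical form, choice of $E_1$ vs $E_2$, the two families $|v|^2 = \pm 1$, and the $\varepsilon_i$) and confirming that the union over $|v|^2 = \pm 1$ is exactly the stated half-lines or line rather than something larger or smaller — in particular checking the endpoints $\lambda = 1$ are attained and that no extra points appear from the timelike family. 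This is precisely the "quite long and elementary" calculation the authors chose to omit, and a careful write-up would organize it as a short lemma computing $\langle E_i v, v\rangle$ and $\langle N_j v, v\rangle$ on the unit hyperbolas once and for all, then four near-identical substitutions.
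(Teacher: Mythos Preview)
The paper omits the proof entirely (``for sake of brevity''), so there is nothing to compare against directly; your approach --- reduce to the canonical forms of Section~\ref{section reduction quadratic map}, parametrize the two unit hyperbolas in $\R^{1,1}$, and compute $q(v)/|v|^2=\vec H+w(v)$ --- is precisely the elementary computation the authors have in mind, and is correct in outline.

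Three concrete slips to fix before writing it up. First, your parametrization is reversed: with the metric $-dx_1^2+dx_2^2$, the vector $(\cosh t,\sinh t)$ is \emph{timelike} ($|v|^2=-1$), not spacelike. Second, $\langle E_2 v,v\rangle$ does \emph{not} vanish: for $v=(v_1,v_2)$ one has $\langle E_2 v,v\rangle=-2v_1v_2$, which equals $-\sinh 2t$ on both unit branches; after dividing by $|v|^2=\pm 1$ this gives $\pm\sinh 2t$, whose range is all of $\R$, and this is exactly what produces the full line in case~2 (versus $\langle E_1 v,v\rangle/|v|^2=\pm\cosh 2t$ with range $(-\infty,-1]\cup[1,\infty)$, giving the two half-lines in case~1). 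Third, your hedge about $\operatorname{rank}(f_q)=2$ is unnecessary: $A_q=0$ together with the second identity in \eqref{pull_f} forces $f_q(\nu_1),f_q(\nu_2)$ to be linearly dependent for all $\nu_1,\nu_2$, so $\operatorname{rank}(f_q)\le 1$ automatically, and $\Phi_q\neq 0$ rules out the lightlike-image case.

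Your argument for the sign of $\Delta$ is too loose as stated: in a Lorentzian space the signature of $\Phi_q$ does not directly give the sign of the nonzero eigenvalue of $U_\Phi$. The clean route is to write $II$ explicitly from the canonical form (as in the displays just before \eqref{relacionesentreinvariantes}, mutatis mutandis) and compute $\Delta=\det U_Q$ from the resulting matrix; the sign then falls out.
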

\begin{proposition}\label{imagen_fq_2}
If  $K_N=0$ and $\Phi_q=0,$ we consider two cases: 
\begin{enumerate}
\item $f_q\neq0;$ in that case the hyperbola degenerates to a straight line with one point removed
$$\left\lbrace \vec{H}+\lambda \mu_{q},\ \lambda\in\R\backslash\{0\} \right\rbrace,$$ 
where $\mu_{q}$ is the distinguished vector defined in Lemma \ref{vec_dist}; in that case $\Delta=0,$ and $q$ is quasi-umbilic;
\item $f_q=0;$ the hyperbola then degenerates to the end point of the vector $\vec{H};$ $q$ is umbilic.
\end{enumerate}
\end{proposition}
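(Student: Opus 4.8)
The plan is to use the explicit canonical forms of $S_\nu$ available when $\Phi_q=0$ — namely $S_\nu=L_q(\nu)I$ if $f_q=0$, and the form of Lemma \ref{vec_dist} if $f_q\neq 0$ — to write down $q(v)$ explicitly and then read off the set $\mathcal{H}=\{q(v)/|v|^2:\ |v|^2=\pm 1\}$.

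First, the umbilic case $f_q=0$. Here $S_\nu=L_q(\nu)I$ for every $\nu$, so $\la q(v),\nu\ra=\la S_\nu v,v\ra=L_q(\nu)\la v,v\ra=\la\vec{H},\nu\ra\,|v|^2$ for all $\nu$; by non-degeneracy of the metric, $q(v)=|v|^2\,\vec{H}$, hence $q(v)/|v|^2=\vec{H}$ whenever $|v|^2=\pm1$. Thus $\mathcal{H}=\{\vec{H}\}$, the endpoint of $\vec{H}$, and $q$ is umbilic by Definition \ref{def quasi-umbilic}.

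Next, the case $f_q\neq 0$. I would first remark that $q$ is quasi-umbilic: by (\ref{pull_f}), $\Phi_q(\nu)=\la f_q(\nu),f_q(\nu)\ra$, so $\Phi_q=0$ forces the image of $f_q$ into the null cone of $\mathcal{S}$; being non-zero, this image is a lightlike line and $\text{rank}\,f_q=1$, so Definition \ref{def quasi-umbilic} applies (and, incidentally, $A_q=0$ because $S_\nu$ lies in $L_q(\nu)I+\R N$ and $[N,N]=0$, consistently with the hypothesis $K_N=0$). By Lemma \ref{vec_dist} there are a positively oriented orthonormal basis $(e_1,e_2)$ of $\R^{1,1}$ and a distinguished vector $\mu_q$ with $S_\nu=L_q(\nu)I+\la\mu_q,\nu\ra N$, $N\in\{N_1,N_2\}$. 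Computing in $(e_1,e_2)$, where $\la v,v\ra=-v_1^2+v_2^2$, one finds $\la N_1 v,v\ra=-\tfrac12(v_1+v_2)^2$ and $\la N_2 v,v\ra=\tfrac12(v_1-v_2)^2$. Since $\la q(v),\nu\ra=L_q(\nu)\la v,v\ra+\la\mu_q,\nu\ra\,\la Nv,v\ra$ for all $\nu$, non-degeneracy again gives $q(v)=|v|^2\,\vec{H}+\la Nv,v\ra\,\mu_q$, so for $|v|^2=\pm1$
\[\frac{q(v)}{|v|^2}=\vec{H}+\lambda(v)\,\mu_q,\qquad \lambda(v):=\frac{\la Nv,v\ra}{|v|^2}.\]

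It remains to determine the range of $\lambda$, the only point needing care. Take $N=N_1$ (the case $N_2$ differs only by relabelling coordinates). A spacelike unit vector is $v=(\sinh t,\pm\cosh t)$, for which $v_1+v_2$ equals $e^{t}$ or $-e^{-t}$; hence $(v_1+v_2)^2$ runs over $(0,\infty)$ and $\lambda=-\tfrac12(v_1+v_2)^2$ runs over $(-\infty,0)$. A timelike unit vector is $v=(\pm\cosh t,\sinh t)$, for which $v_1+v_2$ again equals $e^{t}$ or $-e^{-t}$, so $\lambda=\tfrac12(v_1+v_2)^2$ runs over $(0,\infty)$. Moreover $\lambda(v)\neq 0$ always, since $v_1+v_2=0$ is incompatible with $|v|^2=\pm1$. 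Therefore $\lambda$ attains precisely $\R\setminus\{0\}$ and $\mathcal{H}=\{\vec{H}+\lambda\mu_q:\ \lambda\in\R\setminus\{0\}\}$, the line through $\vec{H}$ in the direction $\mu_q$ with the point $\vec{H}$ removed. Finally, to see $\Delta=0$: $N$ is traceless with $\det N=0$, so $\det S_\nu=\det(L_q(\nu)I+\la\mu_q,\nu\ra N)=L_q(\nu)^2=\la\vec{H},\nu\ra^2$; thus $Q_q=\la\vec{H},\cdot\ra^2$, whose associated symmetric operator $\nu\mapsto\la\vec{H},\nu\ra\vec{H}$ has rank $\leq 1$, giving $\Delta=\det Q_q=0$. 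The main (essentially only) obstacle is the surjectivity of $\lambda$ onto $\R\setminus\{0\}$, settled by the hyperbolic parametrization above; everything else is direct substitution into the canonical form of Lemma \ref{vec_dist}.
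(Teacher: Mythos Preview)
Your proof is correct. The paper itself omits the proof of this proposition for brevity, but your argument is exactly the natural one: write $q(v)$ explicitly from the canonical form $S_\nu=L_q(\nu)I+\la\mu_q,\nu\ra N$ of Lemma~\ref{vec_dist}, and then parametrize the unit spacelike and timelike vectors hyperbolically to determine the range of $\lambda(v)=\la Nv,v\ra/|v|^2$. The computation of $\la N_i v,v\ra$, the surjectivity of $\lambda$ onto $\R\setminus\{0\}$, and the verification that $\Delta=\det Q_q=0$ via $Q_q=\la\vec H,\cdot\ra^2$ are all accurate.
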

In the figure below, the hyperbolas $(a)$ and $(b)$ correspond to the first and to the second case in Proposition \ref{imagen_fq_1} respectively, and the hyperbola $(c)$ to the first case in Proposition \ref{imagen_fq_2}.
\vspace{.5cm}

\begin{center}
\setlength{\unitlength}{0.8mm}
\begin{picture}(150,50)
\thinlines
\put(0,0){\line(1,0){150}}
\put(0,50){\line(1,0){150}}
\put(0,0){\line(0,1){50}}
\put(50,0){\line(0,1){50}}
\put(100,0){\line(0,1){50}}
\put(150,0){\line(0,1){50}}
\put(10,27){$\mathcal{O}$}
\put(10,30){\circle*{1}}

\put(20,20){\line(-1,-1){14}}
\put(30,30){\line(1,1){14}}
\put(25,25){\circle{2.3}}
\put(30,30){\circle*{1.5}}
\put(20,20){\circle*{1.5}}
\put(23,17){$\mathcal{C}$}
\put(60,27){$\mathcal{O}$}
\put(60,30){\circle*{1}}
\put(22,-5){(a)}

\put(75,25){\line(-1,-1){18.7}}
\put(75,25){\line(1,1){18.7}}
\put(75,25){\circle*{2.5}}
\put(73,17){$\mathcal{C}$}
\put(110,27){$\mathcal{O}$}
\put(110,30){\circle*{1}}
\put(72,-5){(b)}

\put(124,24){\line(-1,-1){18.7}}
\put(126,26){\line(1,1){18.7}}
\put(125,25){\color{white}\circle*{1}}
\put(125,25){\circle{2.5}}
\put(123,17){$\mathcal{C}$}
\put(122,-5){(c)}
\end{picture}
\end{center}
\vspace{.5cm}

\section{The Gauss map of a Lorentzian surface in $\R^{2,2}$}\label{section Gauss map}

Let $M$ be a Lorentzian surface immersed in $\R^{2,2}$. We will assume that $M$ is oriented in space and in time: the tangent and the normal bundles $TM$ and $NM$ are oriented, and for all $p\in M,$ a component of  $\{X\in T_pM,\ g(X,X)<0\}$ and a component of $\{X\in N_pM,\ g(X,X)<0\}$ are distinguished; a vector (tangent or normal to $M$) belonging to such a component will be called \emph{future-directed}. We will moreover adopt the following convention:  a basis $(u,v)$ of $T_pM$ or $N_pM$ will be said positively oriented (in space and in time) if it has the orientation of $T_pM$ or $N_pM$ and if $g(u,u)<0$ and $g(v,v)>0$ with $u$ future-directed. The second fundamental form $II:T_pM \rightarrow N_pM$ at each point $p \in M$ is a quadratic map between two (oriented) Lorentzian planes: such a quadratic map naturally defines an element of $SO(1,1)\backslash Q(\R^{1,1},\R^{1,1})/SO(1,1),$ given by its representation in positively oriented and orthonormal frames of $T_pM$ and $N_pM$; the numerical invariants and the curvature hyperbola introduced in the previous sections are thus naturally attached to the second fundamental form $II.$

Let us consider $\Lambda^2\R^{2,2},$ the vector space of bivectors of $\R^{2,2},$ endowed with its natural metric $\langle.,.\rangle$, which has signature $(2,4)$. The Grassmannian of the oriented Lorentzian 2-planes in $\R^{2,2}$ identifies with the submanifold of unit and simple bivectors
$$\mathcal{Q}=\{\eta\in\Lambda^2\R^{2,2}:\ \langle \eta,\eta\rangle=-1,\ \eta\wedge\eta=0\},$$
and the oriented Gauss map with the map
$$G:\hspace{.5cm} M\rightarrow \mathcal{Q},\hspace{.5cm} p\mapsto G(p)=u_1\wedge u_2,$$
where $(u_1,u_2)$ is a positively oriented and orthonormal basis of $T_pM$ (we recall that $u_1$ is timelike and $u_2$ is spacelike). We also consider the Lie bracket
$$[.,.]:\hspace{.5cm}\Lambda^2\R^{2,2}\times\Lambda^2\R^{2,2}\rightarrow\Lambda^2\R^{2,2}.$$
Its restriction to the submanifold $\mathcal{Q}$ is a 2-form with values in $\Lambda^2\R^{2,2}.$ It appears that its pull-back by the Gauss map gives the Gauss and the normal curvatures of the surface:
\begin{proposition}
If $\nabla$ denotes the Levi-Civita connection on $TM$ and $\nabla'$ the normal connection on $NM,$ we have
\begin{equation}\label{pull back general}
G^*[.,.]=R^{\nabla\oplus\nabla'},
\end{equation}
where $R^{\nabla\oplus\nabla'}$ is the curvature tensor of the connection $\nabla\oplus\nabla'$ on $TM\oplus NM,$ considered as a 2-form on $M$ with values in $\Lambda^2TM\oplus\Lambda^2NM\subset M\times\Lambda^2\R^{2,2}.$ 
\end{proposition}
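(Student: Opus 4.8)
The plan is to work at a fixed point $p\in M$ and compute both sides of \eqref{pull back general} in a well-chosen local frame. I would start by choosing a local orthonormal frame $(u_1,u_2)$ of $TM$ around $p$, positively oriented in space and time (so $u_1$ timelike future-directed, $u_2$ spacelike), and a local positively oriented orthonormal frame $(u_3,u_4)$ of $NM$ (with $u_3$ timelike, $u_4$ spacelike), and moreover I would arrange that these frames are \emph{synchronous} at $p$, i.e. $\nabla u_i=0$ and $\nabla' u_\alpha=0$ at $p$ (this is always possible by adjusting the frames by Lorentzian rotations whose "angle" functions have prescribed differentials at $p$). With such a frame, the connection forms of $\nabla\oplus\nabla'$ vanish at $p$, so $R^{\nabla\oplus\nabla'}$ at $p$ is simply the exterior derivative of those connection forms, evaluated at $p$; concretely $R^{\nabla\oplus\nabla'}(X,Y)=(d\omega_{12})(X,Y)\,u_1\wedge u_2 + (d\omega_{34})(X,Y)\,u_3\wedge u_4$ at $p$, where $\omega_{12}$ is the tangential connection $1$-form (so that $d\omega_{12}(X,Y)$ is minus the sectional/Gauss curvature contracted appropriately) and $\omega_{34}$ the normal connection $1$-form (whose exterior derivative encodes $K_N$).

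Next I would compute the left-hand side $G^*[.,.]$ at $p$. By definition $G=u_1\wedge u_2$, so for tangent vectors $X,Y$ at $p$,
\begin{equation*}
(G^*[.,.])(X,Y)=\bigl[\,(d_XG)\,,\,(d_YG)\,\bigr]-\bigl[\,(d_YG)\,,\,(d_XG)\,\bigr]\cdot\tfrac12
=\bigl[\,dG(X),\,dG(Y)\,\bigr],
\end{equation*}
where $dG(X)=(\nabla^{\R^{2,2}}_X u_1)\wedge u_2 + u_1\wedge(\nabla^{\R^{2,2}}_X u_2)$ and $\nabla^{\R^{2,2}}$ is the flat connection of $\R^{2,2}$. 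Using the Gauss formula $\nabla^{\R^{2,2}}_X u_i=\nabla_X u_i + II(X,u_i)$ together with the Weingarten formula, and evaluating at $p$ where $\nabla u_i=0$, the derivative $dG(X)$ at $p$ reduces to $II(X,u_1)\wedge u_2 + u_1\wedge II(X,u_2)$, which is a sum of \emph{mixed} bivectors (one tangent and one normal leg). The key computation is then the Lie bracket in $\Lambda^2\R^{2,2}$ of two such mixed bivectors: expanding $[\,e\wedge f,\,e'\wedge f'\,]$ via the standard formula $[a\wedge b,c\wedge d]=\langle b,c\rangle a\wedge d-\langle a,c\rangle b\wedge d-\langle b,d\rangle a\wedge c+\langle a,d\rangle b\wedge c$, one sees that the purely-tangent ($u_1\wedge u_2$) component produces exactly the Gauss-equation combination of the second fundamental form, and the purely-normal ($u_3\wedge u_4$) component produces exactly the Ricci-equation combination. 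Matching these against the Gauss and Ricci equations (which express the curvatures of $\nabla$ and $\nabla'$ in terms of $II$) gives \eqref{pull back general} at $p$; since $p$ was arbitrary, the identity holds on all of $M$.

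The main obstacle is bookkeeping with signs and the Lorentzian metric: in the pseudo-Euclidean setting the various $\langle u_i,u_j\rangle=\pm1$ factors intervene throughout the Lie-bracket expansion and in the Gauss and Ricci equations, so the delicate point is to verify that every sign conspires so that the $u_1\wedge u_2$-coefficient of $[dG(X),dG(Y)]$ coincides with the sectional curvature term of $R^{\nabla}$ and the $u_3\wedge u_4$-coefficient coincides with $R^{\nabla'}$ — including the fact that the mixed bivectors $II(X,u_i)\wedge u_j$ have one timelike-normal and one (depending on $i$) timelike- or spacelike-tangent factor. I would handle this by writing $II(X,u_i)=\sum_{\alpha=3,4}\epsilon_\alpha\,h^\alpha(X,u_i)\,u_\alpha$ with $\epsilon_3=-1,\epsilon_4=+1$ (and $\epsilon_1=-1,\epsilon_2=+1$ on the tangent side) and tracking these $\epsilon$'s explicitly through the single bracket computation; the rest is routine and matches the classical $\R^4$ computation of \cite{L} with the obvious sign modifications. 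One should also note that both sides of \eqref{pull back general} are tensorial (independent of the chosen frames), which is what licenses the reduction to a synchronous frame at a single point.
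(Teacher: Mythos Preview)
Your proposal is correct and follows essentially the same strategy as the paper: choose a tangent frame synchronous at $p$, express $dG$ in terms of the second fundamental form, compute the bracket of $dG(e_1)$ and $dG(e_2)$, and identify the tangent and normal components with $K$ and $K_N$ via the Gauss and Ricci equations. The only noteworthy difference is the computational device for the bracket---the paper embeds $\Lambda^2\R^{2,2}$ in the Clifford algebra $Cl(2,2)$ and uses $[\eta,\eta']=\tfrac12(\eta'\cdot\eta-\eta\cdot\eta')$, whereas you expand $[a\wedge b,c\wedge d]$ directly via inner products and track the signs $\epsilon_i=\pm1$; both routes are equivalent and your version is arguably more elementary, while the Clifford calculus makes the sign bookkeeping you flag as the ``main obstacle'' somewhat more automatic.
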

We refer to \cite{AB} for a much more general result, in the Riemannian setting, where the bracket $[.,.]$ is interpreted as the curvature tensor of the tautological bundles on the Grassmannian. Although such an interpretation should be also possible here (and explain the result), we give a more direct proof. 
\begin{proof}
We assume that $(e_1,e_2)$ is a local frame of $TM$ in a neighborhood $\mathcal{U}$ of $p\in M$ such that 
$$|e_1|^2=-1,\ |e_2|^2=1\ \mbox{on}\ \mathcal{U}\hspace{.5cm}\mbox{ and}\hspace{.5cm}\nabla e_1=\nabla e_2=0\ \mbox{at}\ p,$$
and, since $G=e_1\wedge e_2,$ we readily get
$$dG(e_1)=II(e_1,e_1)\wedge e_2+e_1\wedge II(e_2,e_1)$$
and 
$$dG(e_2)=II(e_1,e_2)\wedge e_2+e_1\wedge II(e_2,e_2).$$
For the computation, it is convenient to consider $\Lambda^2\R^{2,2}$ as a subset of the Clifford algebra $Cl(2,2):$ the bracket $[.,.]$ is then simply given by
$$[\eta,\eta']=\frac{1}{2}\left(\eta'\cdot\eta-\eta\cdot\eta'\right)$$
for all $\eta,\eta'\in\Lambda^2\R^{2,2},$ where the dot "$\cdot$" stands for the Clifford product; see \cite{Fr} for the basic properties of the Clifford algebras. We then compute
$$[dG(e_1),dG(e_2)]=\frac{1}{2}\left(dG(e_2)\cdot dG(e_1)-dG(e_1)\cdot dG(e_2)\right)$$
with
$$dG(e_1)=II(e_1,e_1)\cdot e_2+e_1\cdot II(e_2,e_1)$$
and 
$$dG(e_2)=II(e_1,e_2)\cdot e_2+e_1\cdot II(e_2,e_2),$$
and easily get
$$[dG(e_1),dG(e_2)]=A+B$$
with
\begin{eqnarray*}
A&=&\left(-\langle II(e_1,e_1),II(e_2,e_2)\rangle+|II(e_1,e_2)|^2\right)e_1\cdot e_2\\
&=&K\ e_1\cdot e_2
\end{eqnarray*}
and
\begin{eqnarray*}
B&=&\frac{1}{2}\left\{-II(e_1,e_1)\cdot II(e_1,e_2)+II(e_2,e_1)\cdot II(e_2,e_2)\right.\\
&&\left.+II(e_1,e_2)\cdot II(e_1,e_1)-II(e_2,e_2)\cdot II(e_2,e_1)\right\}\\
&=& K_N\ e_3\cdot e_4
\end{eqnarray*}
where $(e_3,e_4)$ is a positively oriented and orthonormal frame of $N_{p}M$ ($|e_3|^2=-|e_4|^2=-1$). To derive these formulas we use that
$$e_1\cdot e_1=e_3\cdot e_3=1\hspace{1cm}\mbox{and}\hspace{1cm}e_2\cdot e_2=e_4\cdot e_4=-1,$$
together with the formulas 
$$K=xy-z^2-uv+w^2\hspace{1cm}\mbox{and}\hspace{1cm}K_N=-w(x+y)+z(u+v)$$
if the second fundamental form is given by
$$II=\left(\begin{array}{cc}x& z\\z&y\end{array}\right)e_3+\left(\begin{array}{cc}u& w\\w&v\end{array}\right)e_4$$
in $(e_1,e_2).$ See \cite{BLR} for details, where a similar computation is carried out. Thus
\begin{equation}\label{pull back K K_N}
G^*[.,.]=\left(K\ e_1\wedge e_2+K_N\ e_3\wedge e_4\right)\omega_M,
\end{equation}
which is equivalent to (\ref{pull back general}).
\end{proof}
\begin{corollary}
Let us consider the 2-forms $\omega_T$ and $\omega_N$ defined on $\mathcal{Q}$ by
\begin{equation}
{\omega_T}_p(\eta,\eta'):=-\langle p, [\eta,\eta']\rangle\hspace{.5cm}\mbox{and}
\hspace{.5cm}{\omega_N}_p(\eta,\eta'):=-\langle *p, [\eta,\eta']\rangle
\end{equation}
for all $p\in\mathcal{Q},$ $\eta,\eta'\in T_p\mathcal{Q}.$ Then
\begin{equation}\label{pull back omega T N}
G^*\omega_T=K\ \omega_M
\hspace{.5cm}\mbox{and}\hspace{.5cm}G^*\omega_N=K_N\ \omega_M,
\end{equation}
where $\omega_M$ is the area form of $M.$
\end{corollary}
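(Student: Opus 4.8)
The plan is to deduce the corollary directly from the preceding proposition, i.e. from formula (\ref{pull back K K_N}), by computing the pull-backs $G^*\omega_T$ and $G^*\omega_N$ through the chain $G^*\omega_T = G^*\big({\omega_T}\big)$ and using $G^*[.,.] = R^{\nabla\oplus\nabla'}$. Concretely, for tangent vectors $X,Y$ at $p\in M$ one has $(G^*\omega_T)_p(X,Y) = {\omega_T}_{G(p)}(dG_p(X),dG_p(Y)) = -\langle G(p),[dG_p(X),dG_p(Y)]\rangle = -\langle G(p),(G^*[.,.])_p(X,Y)\rangle$, and similarly $(G^*\omega_N)_p(X,Y) = -\langle *G(p),(G^*[.,.])_p(X,Y)\rangle$. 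So everything reduces to pairing the right-hand side of (\ref{pull back K K_N}), namely $\big(K\, e_1\wedge e_2 + K_N\, e_3\wedge e_4\big)\omega_M$, against $G(p)$ and against $*G(p)$.

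The key computational step is therefore to identify $G(p)$ and $*G(p)$ as bivectors and to record the relevant inner products in $\Lambda^2\R^{2,2}$. First I would fix a positively oriented orthonormal frame $(e_1,e_2,e_3,e_4)$ adapted to the splitting $T_pM\oplus N_pM$, with $|e_1|^2=|e_3|^2=-1$, $|e_2|^2=|e_4|^2=1$, so that $G(p) = e_1\wedge e_2$ and, by definition of the Hodge star on $\Lambda^2\R^{2,2}$ associated with this orientation, $*G(p) = *(e_1\wedge e_2)$ is (up to the sign dictated by the signature $(2,4)$ and the chosen orientation) $\pm e_3\wedge e_4$; I would pin down this sign so that the final formulas come out with a plus. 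Then the only inner products needed are $\langle e_1\wedge e_2, e_1\wedge e_2\rangle = -1$, $\langle e_1\wedge e_2, e_3\wedge e_4\rangle = 0$, $\langle e_3\wedge e_4, e_1\wedge e_2\rangle = 0$, and $\langle e_3\wedge e_4, e_3\wedge e_4\rangle = -1$, all immediate from the fact that an orthonormal basis of $\R^{2,2}$ induces an orthonormal basis of $\Lambda^2\R^{2,2}$ with $\langle e_i\wedge e_j, e_i\wedge e_j\rangle = |e_i|^2|e_j|^2$. Feeding these into the pairings yields $-\langle G(p),(K e_1\wedge e_2 + K_N e_3\wedge e_4)\rangle\omega_M = -(K\cdot(-1))\omega_M = K\,\omega_M$ and $-\langle *G(p),(K e_1\wedge e_2 + K_N e_3\wedge e_4)\rangle\omega_M = -(K_N\cdot(-1))\omega_M = K_N\,\omega_M$, which are exactly (\ref{pull back omega T N}).

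One small point to address carefully is that ${\omega_T}$ and ${\omega_N}$ are defined on $\mathcal{Q}$ using the ambient vectors $p$ and $*p$ (the bivector $p\in\mathcal{Q}$ itself and its Hodge dual) evaluated on tangent vectors $\eta,\eta'\in T_p\mathcal{Q}$, whereas after pull-back we are plugging in $dG_p(X)$, $dG_p(Y)$, which indeed lie in $T_{G(p)}\mathcal{Q}$ since $G$ maps into $\mathcal{Q}$; so the composition is legitimate and the substitution $p = G(p)$ is exactly what the pull-back prescribes. I would also remark that $R^{\nabla\oplus\nabla'}$ takes values in $\Lambda^2 T_pM\oplus\Lambda^2 N_pM$, consistently with the fact that only the $e_1\wedge e_2$ and $e_3\wedge e_4$ components appear, so no cross terms $e_i\wedge e_j$ with $i\leq 2 < j$ contribute to the pairings.

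The main obstacle is purely bookkeeping: getting the sign of $*(e_1\wedge e_2)$ right for the signature $(2,4)$ metric on $\Lambda^2\R^{2,2}$ with the orientation inherited from the chosen orientations of $TM$ and $NM$, and making sure this sign is consistent with the minus signs built into the definitions of $\omega_T$ and $\omega_N$ and with the convention that $\omega_M$ is the (positively oriented) area form. Once the Hodge-star sign convention is fixed, the rest is a one-line substitution into (\ref{pull back K K_N}), so I would state the sign convention explicitly and then present the computation as above, keeping it brief.
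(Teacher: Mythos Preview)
Your proposal is correct and follows essentially the same route as the paper. The paper's argument is phrased even more compactly: it observes that the definitions of $\omega_T$ and $\omega_N$ amount to the decomposition $[.,.]_p=\omega_T\,p+\omega_N\,(*p)$ (projection of the bracket onto the span of $p$ and $*p$), and then (\ref{pull back omega T N}) is immediate from (\ref{pull back K K_N}); your explicit pairing computation is exactly what unpacks this one-line claim, including the sign bookkeeping you flagged.
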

In the statement of the corollary and below, $\langle.,.\rangle$ denotes the natural scalar product on $\Lambda^2\R^{2,2}$ and $*:\Lambda^2\R^{2,2}\rightarrow\Lambda^2\R^{2,2}$ is the Hodge operator, i.e. the symmetric operator of $\Lambda^2\R^{2,2}$ such that
$$\eta\wedge\eta'=\langle\eta,*\eta'\rangle\ e_1\wedge e_2\wedge e_3\wedge e_4$$
for all $\eta,\eta',$ where $e_1\wedge e_2\wedge e_3\wedge e_4$ is the canonical volume element.
\begin{proof}
By definition, we have
$$[.,.]_p=\omega_T\ p+\omega_N\ (*p)$$
for all $p\in\mathcal{Q},$ and the result readily follows from (\ref{pull back K K_N}). 
\end{proof}
We deduce an extrinsic proof of the following well-known results:
\begin{corollary}
Assume that $M$ is a compact Lorentzian surface immersed in $\R^{2,2},$ such that $TM$ and $NM$ are oriented (in space and in time). Then
$$\int_MK\ \omega_M=0\hspace{.5cm}\mbox{and}\hspace{.5cm}\int_MK_N\ \omega_M=0,$$
where $\omega_M$ is the area form of $M.$
\end{corollary}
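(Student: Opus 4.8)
The plan is to deduce the vanishing from the pull-back formulas (\ref{pull back omega T N}) and Stokes' theorem, once we know that $\omega_T$ and $\omega_N$ are \emph{exact} $2$-forms on $\mathcal{Q}$. Indeed, if $\omega_T=d\beta_T$ and $\omega_N=d\beta_N$ for $1$-forms $\beta_T,\beta_N$ on $\mathcal{Q}$, then, $M$ being compact without boundary, (\ref{pull back omega T N}) gives $\int_M K\,\omega_M=\int_M G^*\omega_T=\int_M d(G^*\beta_T)=0$, and likewise $\int_M K_N\,\omega_M=\int_M d(G^*\beta_N)=0$. So everything reduces to the exactness of $\omega_T$ and $\omega_N$ on $\mathcal{Q}$.

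To prove this I would split $\Lambda^2\R^{2,2}=\Lambda^+\oplus\Lambda^-$ into the $(\pm 1)$-eigenspaces of the Hodge operator $*$ (here $*^2=\mathrm{Id}$). A direct computation in an orthonormal basis of $\R^{2,2}$ shows that the metric $\langle\cdot,\cdot\rangle$ restricts to each of the $3$-dimensional subspaces $\Lambda^{\pm}$ with signature $(1,2)$, and that the Lie bracket preserves this decomposition, $[\Lambda^{\pm},\Lambda^{\pm}]\subseteq\Lambda^{\pm}$ and $[\Lambda^+,\Lambda^-]=0$; the latter is clear in the Clifford picture used in the proof above, where $*$ is, up to a fixed sign, multiplication by the volume element, which is central in $Cl(2,2)^0\supset\Lambda^2\R^{2,2}$ (equivalently, this is the decomposition of $\mathfrak{so}(2,2)$ into its two simple ideals). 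Since $\Lambda^+\perp\Lambda^-$, writing $\eta=\eta^++\eta^-$ with $\eta^{\pm}\in\Lambda^{\pm}$ one has $\langle\eta,\eta\rangle=|\eta^+|^2+|\eta^-|^2$ and $\eta\wedge\eta=(|\eta^+|^2-|\eta^-|^2)\,e_1\wedge e_2\wedge e_3\wedge e_4$, so the equations defining $\mathcal{Q}$ become $|\eta^+|^2=|\eta^-|^2=-\tfrac12$. Hence $\eta\mapsto(\eta^+,\eta^-)$ is a diffeomorphism of $\mathcal{Q}$ onto $\mathcal{Q}^+\times\mathcal{Q}^-$, where $\mathcal{Q}^{\pm}:=\{\xi\in\Lambda^{\pm}:|\xi|^2=-\tfrac12\}$ is a one-sheeted hyperboloid in $\Lambda^{\pm}$, hence diffeomorphic to $\R\times S^1$.

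Because the bracket respects the splitting and $\Lambda^+\perp\Lambda^-$, the defining formula ${\omega_T}_p(\eta_1,\eta_2)=-\langle p,[\eta_1,\eta_2]\rangle$ splits as $-\langle p^+,[\eta_1^+,\eta_2^+]\rangle-\langle p^-,[\eta_1^-,\eta_2^-]\rangle$, each summand depending only on the projection to the corresponding factor. Therefore, under the diffeomorphism above, $\omega_T=-\pi_+^*\sigma^+-\pi_-^*\sigma^-$ and, in the same way, $\omega_N=-\pi_+^*\sigma^++\pi_-^*\sigma^-$, where $\pi_{\pm}:\mathcal{Q}^+\times\mathcal{Q}^-\to\mathcal{Q}^{\pm}$ are the projections and $\sigma^{\pm}$ is the $2$-form on $\mathcal{Q}^{\pm}$ given by $\sigma^{\pm}_\xi(a,b)=\langle\xi,[a,b]\rangle$. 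But $\mathcal{Q}^{\pm}$ is a surface of the homotopy type of a circle, so $H^2_{\mathrm{dR}}(\mathcal{Q}^{\pm})=0$; since a $2$-form on a surface is automatically closed, $\sigma^{\pm}$ is exact, and so are $\omega_T=-\pi_+^*\sigma^+-\pi_-^*\sigma^-$ and $\omega_N$. The first paragraph then concludes.

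The main obstacle is exactly this exactness. A priori $H^2_{\mathrm{dR}}(\mathcal{Q})\cong H^1(\mathcal{Q}^+)\otimes H^1(\mathcal{Q}^-)$ is one-dimensional, so the conclusion is not a formal consequence of the topology of $\mathcal{Q}$ alone: one really has to use that the Lie bracket splits along the self-dual/anti-self-dual decomposition, which forces $\omega_T$ and $\omega_N$ to be pulled back separately from the two $2$-dimensional factors — where $H^2$ vanishes for trivial dimensional reasons. Everything else (the diffeomorphism $\mathcal{Q}\cong\mathcal{Q}^+\times\mathcal{Q}^-$, the signature count, the Clifford identity for $*$, and the final application of Stokes' theorem) is routine.
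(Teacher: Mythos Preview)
Your proof is correct and follows essentially the same route as the paper: the self-dual/anti-self-dual splitting $\Lambda^2\R^{2,2}=\Lambda^+\oplus\Lambda^-$, the product decomposition $\mathcal{Q}\cong\mathcal{Q}^+\times\mathcal{Q}^-$ into one-sheeted hyperboloids, and the fact that $\omega_T$ and $\omega_N$ split as (signed) sums of pullbacks of area forms from the two factors. The only cosmetic difference is that the paper phrases the vanishing of $\int_M g_i^*\omega_i$ as a degree-zero statement (the target being unbounded), whereas you invoke $H^2_{\mathrm{dR}}(\mathcal{Q}^{\pm})=0$ and Stokes directly; these are equivalent formulations of the same topological fact.
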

\begin{proof}
Since $*^2=id_{\Lambda^2\R^{2,2}}$ and similarly to the Euclidian case, we have the splitting
$$\Lambda^2\R^{2,2}=\Lambda^+\R^{2,2}\ \oplus\ \Lambda^-\R^{2,2},$$
where $\Lambda^+\R^{2,2}$ and $\Lambda^-\R^{2,2}$ are the eigenspaces of $*$ associated to the eigenvalues $+1$ and $-1$ respectively; these two 3-dimensional spaces are orthogonal, and equipped with a metric of signature (1,2). In this splitting 
\begin{equation}\label{splitting Q}
\mathcal{Q}=\mathcal{H}_1\times\mathcal{H}_2,
\end{equation}
where $\mathcal{H}_1$ and $\mathcal{H}_2$ are the hyperboloids 
$$\mathcal{H}_1=\{\eta\in\Lambda^+\R^{2,2}:\ \langle\eta,\eta\rangle=-1/2\}\hspace{.3cm}\mbox{and}\hspace{.3cm}\mathcal{H}_2=\{\eta\in\Lambda^-\R^{2,2}:\ \langle\eta,\eta\rangle=-1/2\}.$$
Let us write $G=(g_1,g_2)$ in the decomposition (\ref{splitting Q}). We have
$$G^*\omega_T=\frac{1}{2}\left(g_1^*\omega_1+g_2^*\omega_2\right)\hspace{.5cm}\mbox{and}\hspace{.5cm}G^*\omega_N=\frac{1}{2}\left(g_1^*\omega_1-g_2^*\omega_2\right),$$
where $\omega_1$ and $\omega_2$ are the 2-forms on $\mathcal{H}_1$ and $\mathcal{H}_2$ such that
$$[X,Y]={\omega_1}_{p_1}(X_1,Y_1)\ p_1+{\omega_2}_{p_2}(X_2,Y_2)\ p_2$$
for all $X=X_1+X_2$ and $Y=Y_1+Y_2\in T_p\mathcal{Q}\simeq T_{p_1}\mathcal{H}_1\oplus T_{p_2}\mathcal{H}_2$ ($\omega_1$ and $\omega_2$ are in fact the natural area forms on $\mathcal{H}_1$ and $\mathcal{H}_2$). Now, since $\mathcal{H}_1$ and $\mathcal{H}_2$ are not bounded, we necessarily have $\deg g_1=\deg g_2=0$ and 
$$\int_Mg_1^*\omega_1=\int_Mg_2^*\omega_2=0;$$
thus 
$$\int_M G^*\omega_T=\int_M G^*\omega_N=0,$$ 
and (\ref{pull back omega T N}) implies the result.
\end{proof}
We finish this section with an interpretation using the Gauss map of the vector $\mu_{II}$ and of the new invariants $\alpha$ and $\beta$ defined at a quasi-umbilic point of a Lorentzian surface $M,$ i.e. at a point $p$ where the second fundamental form is quasi-umbilic (Definition \ref{def quasi-umbilic}). First, for all unit vector $u$ belonging to $T_pM,$ if $u^{\perp}$ is a vector such that $u,u^{\perp}$ is a positively oriented Lorentzian basis of $T_pM,$
then
\begin{equation}\label{dG decomposition}
dG(u)=-\vec{H}\wedge u^{\perp}+II^0(u,u)\wedge u^{\perp}+u\wedge II^0(u,u^{\perp})
\end{equation}
where the traceless second fundamental form $II^0$ is given by
\begin{equation}\label{expr II^0}
II^0(e_1,e_1)=\pm\frac{1}{2}\mu_{II},\hspace{.5cm}II^0(e_2,e_2)=\pm\frac{1}{2}\mu_{II}\hspace{.5cm}\mbox{and}\hspace{.5cm}II^0(e_1,e_2)=\frac{1}{2}\mu_{II}
\end{equation}
(Lemma \ref{vec_dist}). We interpret each term in (\ref{dG decomposition}) as an infinitesimal rotation of the tangent plane in the direction $u:$ the first term $-\vec{H}\wedge u^{\perp}$ represents a \emph{mean infinitesimal rotation} of the tangent plane (the \textit{mean} is with respect to the tangent directions) in the hyperplane $T_pM\oplus\vec{H},$ around the tangent direction $u^{\perp}$ and with velocity $\vec H,$ whereas the term $II^0(u,u)\wedge u^{\perp}$ (resp. $u\wedge II^0(u,u^{\perp})$) represents an infinitesimal rotation of the tangent plane in the hyperplane $T_pM\oplus\R II^0(u,u)$ (resp. $T_pM\oplus\R II^0(u,u^{\perp}))$ around the tangent direction $u^{\perp}$ (resp. $u$), with velocity $II^0(u,u)$ (resp. $II^0(u,u^{\perp})$). Using (\ref{expr II^0}) we may easily get 
\begin{equation}\label{formulas II quasi-umbilic}
II^0(u,u)=-II^0(u,u^{\perp})=-\mu_{II}\langle u,N_1\rangle^2\hspace{.5cm}\mbox{or}\hspace{.5cm}II^0(u,u)=II^0(u,u^{\perp})=\mu_{II}\langle u,N_2\rangle^2
\end{equation}
depending the sign in (\ref{expr II^0}), where $N_1$ and $N_2$ are the null tangent vectors $\frac{\sqrt{2}}{2}(e_1+e_2)$ and $\frac{\sqrt{2}}{2}(e_2-e_1).$ In fact the formulas (\ref{formulas II quasi-umbilic}) characterize a quasi-umbilic point: the two infinitesimal rotations $II^0(u,u)\wedge u^{\perp}$ and $u\wedge II^0(u,u^{\perp})$ take place in the same hyperplane $T_pM\oplus\R\mu_{II},$ with the same velocities, proportional to the squared of the projection of the direction $u$ onto one of the two null lines of $T_pM.$ Finally the invariants $\alpha$ and $\beta$ determine the mean infinitesimal rotation once the vector $\mu_{II}$ is known.

\section{Asymptotic directions on a Lorentzian surface in $\R^{2,2}$}\label{section asymptotic directions}
In this section, we introduce the asymptotic directions of a Lorentzian surface in $\R^{2,2}$ by means of its Gauss map, give an intrinsic equation for the asymptotic lines on a Lorentzian surface, discuss their causal characters and show that the asymptotic directions correspond to directions of degeneracy of natural height functions defined on the surface. We then introduce the mean directionally curved directions on a Lorentzian surface in $\R^{2,2}$ and mention some of their relations with the asymptotic directions. We finally study the asymptotic directions of Lorentzian surfaces in Anti de Sitter space.

\subsection{Definition, intrinsic equation and causal character}
We still assume that $M$ is an oriented Lorentzian surface  in $\R^{2,2}$ and denote by $G:M\rightarrow\mathcal{Q}$ its Gauss map. Let us consider the quadratic map 
$$\delta: T_pM \rightarrow \Lambda^4\R^{2,2},\ \ \delta(v)=\frac{1}{2}dG(v) \wedge dG(v),$$ 
where $\Lambda^4\R^{2,2}$ is the space of 4-vectors of $\R^{2,2}$. Since $\Lambda^4\R^{2,2}$ naturally identifies to $\R$ (using the canonical volume element $e_1\wedge e_2\wedge e_3\wedge e_4$), $\delta$ may also be considered as a quadratic form on $T_pM.$ 
\begin{definition}\label{def asymptotic}
A non-zero vector $v \in T_pM$ defines an {\it asymptotic direction} at $p$ if $\delta(v)=0$.
\end{definition}
\begin{remark}
If $v,v'\in T_pM$ are such that  $G(p)=v\wedge v',$ then 
$$dG=II(v,.)\wedge v'+v\wedge II(v',.)$$ 
and 
\begin{equation}\label{delta v vp}
\delta(v)=v \wedge v'\wedge II(v,v')\wedge II(v,v).
\end{equation} 
Thus $v$ is an asymptotic direction if and only if $II(v,v')$ and $II(v,v)$ are linearly dependent.
\end{remark}
We analyze in detail the case where rank $f_{II}=2$ and the signature of $\Phi_{II}$ is $(1,1)$, assuming moreover that
\begin{equation}\label{simple hyp}
(|\H|^2-K)^2-K_N^2>0\hspace{.5cm}\mbox{and}\hspace{.5cm}|\H|^2-K>0.
\end{equation}
Let us first describe the second fundamental form in the basis of eigenvectors $(u_1,u_2)$ of $U_{\phi}$ given by Proposition $\ref{diag}$. The second hypothesis in (\ref{simple hyp}) implies that the eigenvalues $a$ and $b$ are positive, and we set $\mathsf{b}=\sqrt{b}$. The normal vector $\nu_0= \frac{1}{\mathsf{b}}u_2$ is such that $\Phi_{II}(\nu_0)=1$. Moreover, straightforward computations yield $\tilde \Phi_{II}(\nu_0,u_1)=0$ and $\tilde \Phi_{II}(\nu_0,u_2)=\mathsf{b}$ and thus 
$$\tilde \Phi_{II}(\nu_0,\nu)=\mathsf{b}\langle \nu,u_2\rangle$$ 
for all $\nu\in N_pM.$ On the other hand, $A_{II}(\nu_0,u_2)=0$ and Equation (\ref{rela_formas}) yields $A_{II}^2(\nu_0,u_1)=a;$ thus 
$A_{II}(\nu_0,\nu)= \mathsf{a}\langle \nu, u_1\rangle$ where $\mathsf{a}=\sqrt{a}$ or $-\sqrt{a}.$ Therefore, in some positively oriented and orthonormal basis $(e_1,e_2)$ of $T_pM,$
\begin{eqnarray}\label{e1}
S_{u_1} & = & - \alpha I \pm A_{II}(\nu_0,u_1)E_2=\left(\begin{array}{cc}-\alpha&\mp \mathsf{a}\\\pm \mathsf{a}&-\alpha\end{array}\right)
\end{eqnarray}
and
\begin{eqnarray}\label{e2}
S_{u_2} & = & \beta I \pm \tilde \Phi_{II}(\nu_0,u_2)E_1=\left(\begin{array}{cc}\beta\pm \mathsf{b}&0\\0&\beta\mp \mathsf{b}\end{array}\right)
\end{eqnarray}
(recall the normal form of $S_{\nu}$ in the table Section \ref{section reduction quadratic map}), and we get
\begin{equation}\label{segundaforma1}
II =  \left[\left(\begin{array}{cc}-\alpha&0\\0&\alpha\end{array}\right) \mp\left(\begin{array}{cc}0&\mathsf{a}\\\mathsf{a}&0\end{array}\right)\right]u_1+\left[\left(\begin{array}{cc}-\beta&0\\0&\beta\end{array}\right)\mp\left(\begin{array}{cc}\mathsf{b}&0\\0&\mathsf{b}\end{array}\right)\right]u_2
\end{equation}
(keeping in mind the relations $\langle II(X), u_i\rangle=\langle S_{u_i}(X),X \rangle,\ \ i=1,2,$ with $|u_1|^2=-|u_2|^2=-1$). Straightforward computations then give the classical invariants of the second fundamental form in terms of $\mathsf{a},$ $\mathsf{b},$ $\alpha$ and $\beta:$ we have
\begin{eqnarray}\label{relacionesentreinvariantes}
|\H|^2 & = & - \alpha^2+\beta^2,\hspace{1cm} K= - \alpha^2+\beta^2-\mathsf{a}^2-\mathsf{b}^2,\\ \nonumber 
\Delta & = & -\mathsf{a}^2\beta^2+ \mathsf{a}^2\mathsf{b}^2+ \alpha^2\mathsf{b}^2\hspace{.5cm}\mbox{and}\hspace{.5cm} K_N=2\mathsf{a}\mathsf{b}.
\end{eqnarray}
Further, since 
$$dG(e_1)=II(e_1,e_1)\wedge e_2+e_1\wedge II(e_2,e_1)$$
and
$$dG(e_2)=II(e_1,e_2)\wedge e_2+e_1\wedge II(e_2,e_2),$$
we easily get
$$\delta(e_1,e_1)=\pm \mathsf{a}(\beta\pm \mathsf{b}),\ \delta(e_2,e_2)=\pm \mathsf{a}(\beta\mp \mathsf{b})\hspace{.2cm}\mbox{and}\hspace{.2cm}\delta(e_1,e_2)=\mp \alpha \mathsf{b}.$$
Thus, if $v= x e_1 +y e_2,$
\begin{equation}\label{expr delta}
\delta(v)=\pm \mathsf{a}(\beta\pm \mathsf{b})x^2\pm \mathsf{a}(\beta\mp \mathsf{b})y^2\mp 2\alpha \mathsf{b} xy,
\end{equation}
which proves the following:
\begin{proposition}\label{prop asymptotic lines}
Assuming that (\ref{simple hyp}) holds, then, in a positively oriented and orthonormal basis $(e_1,e_2)$ of $T_pM$ such that 
\begin{equation}\label{basis adapted axes}
II^0(e_1)=II^0(e_2)=\mp\mathsf{b}u_2\hspace{.5cm}\mbox{and}\hspace{.5cm}II^0(e_1,e_2)=\mp \mathsf{a}u_1
\end{equation}
where $II^0$ is the traceless second fundamental form, the equation of the asymptotic directions is 
\begin{equation}\label{delta non qo}
\mathsf{a}(\beta\pm \mathsf{b})x^2+\mathsf{a}(\beta\mp \mathsf{b})y^2-2\alpha \mathsf{b} xy =0,
\end{equation}
where $\mathsf{a},$ $\mathsf{b},$ $\alpha$ and $\beta$ are numerical invariants satisfying (\ref{relacionesentreinvariantes}). 
\end{proposition}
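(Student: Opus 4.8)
The plan is simply to collect, in the right order, the computations already carried out in the paragraphs preceding the statement: the proposition is in effect a summary of that discussion, so the argument reduces to checking that the displayed formulas fit together as claimed and that the sign conventions propagate consistently.

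First I would apply Proposition~\ref{diag} to the operator $U_\Phi$ acting on the normal plane $N_pM$. The hypotheses (\ref{simple hyp}) guarantee that $U_\Phi$ is diagonalizable with two \emph{positive} eigenvalues $a,b$; I fix the associated orthonormal positively oriented basis $(u_1,u_2)$ of $N_pM$ and set $\mathsf b=\sqrt b$, $\mathsf a=\pm\sqrt a$. Choosing the unit normal $\nu_0=\mathsf b^{-1}u_2$, so that $\Phi_{II}(\nu_0)=1$, a direct computation gives $\tilde\Phi_{II}(\nu_0,\cdot)=\mathsf b\,\langle\cdot,u_2\rangle$, and the identity (\ref{rela_formas}) of Lemma~\ref{def Phi} then forces $A_{II}(\nu_0,\cdot)=\mathsf a\,\langle\cdot,u_1\rangle$.

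Next I would feed these forms into the normal form of $S_\nu$ recorded in the row ``$\mathrm{rank}\,f_q=2$, signature of $\Phi_q$ equal to $(1,1)$'' of the table of Section~\ref{section reduction quadratic map}. This yields the explicit matrices (\ref{e1}) and (\ref{e2}) of $S_{u_1}$ and $S_{u_2}$ in a suitable orthonormal positively oriented basis $(e_1,e_2)$ of $T_pM$, hence the expression (\ref{segundaforma1}) for $II$, and in particular the form (\ref{basis adapted axes}) of the traceless part $II^0$ in that basis. Taking traces and determinants of $S_{u_1}$ and $S_{u_2}$, and using the definitions of the invariants $|\vec H|^2,K,\Delta,K_N$ (Definition~\ref{def invariants}), one then obtains the relations (\ref{relacionesentreinvariantes}). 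The final step is to differentiate the Gauss map: from $dG(e_i)=II(e_i,e_1)\wedge e_2+e_1\wedge II(e_i,e_2)$, valid at $p$ in the chosen frame, and the expression (\ref{segundaforma1}) for $II$, I compute the values of the symmetric bilinear form $\delta(e_i,e_j)=\tfrac12\,dG(e_i)\wedge dG(e_j)$, using the identification $\Lambda^4\R^{2,2}\cong\R$ furnished by the volume element $e_1\wedge e_2\wedge e_3\wedge e_4$; this gives $\delta(e_1,e_1)=\pm\mathsf a(\beta\pm\mathsf b)$, $\delta(e_2,e_2)=\pm\mathsf a(\beta\mp\mathsf b)$ and $\delta(e_1,e_2)=\mp\alpha\mathsf b$. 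Writing $v=xe_1+ye_2$ and expanding $\delta(v)=\delta(e_1,e_1)x^2+2\delta(e_1,e_2)xy+\delta(e_2,e_2)y^2$ produces (\ref{expr delta}), and Definition~\ref{def asymptotic} ($\delta(v)=0$) yields (\ref{delta non qo}) after discarding the common overall sign.

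The only genuinely delicate point — requiring care rather than routine algebra — is the bookkeeping of the several coupled $\pm/\mp$ ambiguities: the choice of $\mathsf a$, the sign in the canonical form of $S_{\nu_0}$, and the sign introduced when passing to the basis $(e_1,e_2)$. One must verify that the selected basis $(e_1,e_2)$ is precisely the one characterized by (\ref{basis adapted axes}) and that these signs propagate consistently through $II$, $dG$ and $\delta$, so that (\ref{delta non qo}) holds with the indicated correlated signs. Everything else reduces to elementary linear algebra in $T_pM$, $N_pM$ and $\mathcal S$, together with exterior-algebra computations in $\Lambda^\bullet\R^{2,2}$.
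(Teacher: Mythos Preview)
Your proposal is correct and follows essentially the same route as the paper: the proposition is indeed stated right after the words ``which proves the following,'' and the proof \emph{is} the preceding discussion, which you have accurately reconstructed step by step (Proposition~\ref{diag} for the normal basis $(u_1,u_2)$, the choice $\nu_0=\mathsf b^{-1}u_2$, identity~(\ref{rela_formas}) for $A_{II}$, the normal form of $S_\nu$ from the table in Section~\ref{section reduction quadratic map} giving (\ref{e1})--(\ref{segundaforma1}), and then the direct computation of $\delta$ via $dG$). Your remark about tracking the coupled $\pm/\mp$ signs is apt and is the only place where the paper's exposition leaves details to the reader.
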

\noindent We will give applications of this intrinsic equation below.
\begin{remark}\label{rmk interp basis}
 The conditions in (\ref{basis adapted axes}) have the following simple interpretation in terms of the curvature hyperbola: the vectors $e_1$ and $e_2$ appear to be the preimages by the map $v\mapsto II(v)/|v|^2$ of the points of the hyperbola belonging to the spacelike axis.
\end{remark}
We now discuss the causal character of the asymptotic directions. We consider
\begin{equation}\label{def delta^o}
\delta^o:=\delta-\frac{1}{2}\tr_g\delta\ g,
\end{equation}
the traceless part of the quadratic form $\delta.$ Using (\ref{expr delta}) and 
the relations (\ref{relacionesentreinvariantes}), we easily get
\begin{equation}\label{inv delta}
\tr_g\delta=-K_N\hspace{1cm}\mbox{and}\hspace{1cm}disc(\delta):=-\det{}_g\delta=-\Delta,
\end{equation}
and also
$$disc(\delta^o):=-\det{}_g{\delta^o}=\frac{1}{4}K_N^2-\Delta.$$
Contrasting with the cases of Riemannian and Lorentzian surfaces in 4-dimensional Minkowski space $\mathbb R^{3,1}$ \cite{b_s_1,b_s_3}, the existence of asymptotic lines at a point on the surface is equivalent here to the condition $\Delta \geq 0$ at this point. By (\ref{def delta^o}),
\begin{equation}\label{lignes asymptotiques delta^o}
\delta(u)=0\hspace{1cm}\mbox{if and only if}
\hspace{1cm}\delta^o(u)=\frac{1}{2}K_N|u|^2.
\end{equation}
The causal character of the asymptotic directions appears to depend on the signs of the forms $\delta$, $\delta^o$ and their discriminants. The results are similar to the case of the Lorentzian surfaces in 4-dimensional Minkowski space \cite[p. 1708]{b_s_3}, and we only briefly describe them below. There are two main cases, depending on $disc(\delta)$. Let us analyze only the case when $disc(\delta)<0,$ that is, when two distinct asymptotic directions are defined. We then divide the discussion in four cases, according to the sign of $\delta^o.$

\textit{First case: $disc(\delta^o)>0$:} if $\delta^o$ is positive (resp. negative), the solutions $u$ of (\ref{lignes asymptotiques delta^o}) are necessarily spacelike (resp. timelike) if $K_N>0,$ and timelike (resp. spacelike) if $K_N<0.$ 

\textit{Second case: $disc(\delta^o)<0$:} let us denote by $u_{\delta^o}$ the traceless symmetric operator of $T_pM$ associated to $\delta^o;$ we then have $|u_{\delta^o}|^2=-\det(u_{\delta^o})<0$ (recall Section \ref{section reduction quadratic map}) and, in some positively oriented and orthonormal basis $(e_1,e_2)$ of $T_{p}M,$ the matrix of $u_{\delta^o}$ reads
$$M(u_{\delta^o},(e_1,e_2))=\pm\sqrt{-|u_{\delta^o}|^2}\left(\begin{array}{cc}0&1\\-1&0\end{array}\right);$$
see Remark \ref{form_cano}. Writing $u=xe_1+ye_2,$ (\ref{lignes asymptotiques delta^o}) then reads
 \begin{equation}\label{lignes asymptotiques delta^o base}
\delta(u)=0\hspace{.5cm}\mbox{if and only if}\hspace{.5cm}\pm2\sqrt{-|u_{\delta^o}|^2}xy=K_N(x^2-y^2).
\end{equation}
Thus, if $u=xe_1+ye_2$ is a non trivial solution of $\delta(u)=0$, so is $\overline{u}:=-ye_1+xe_2.$ Observe that these solutions are necessarily spacelike or timelike, and that if one of them is spacelike, the other one is timelike; thus, one asymptotic direction is spacelike and the other one is timelike.

\textit{Third case: $disc(\delta^o)=0,$ $\delta^o\neq 0$:} we then have $|u_{\delta^o}|^2=-\det(u_{\delta^o})=0,$ and the kernel of $u_{\delta^o}$ is a null line in $T_{p}M;$ there is thus a unique lightlike line of solutions for the equations in (\ref{lignes asymptotiques delta^o}). The other independent solution is thus a timelike or  a spacelike line. But using (\ref{lignes asymptotiques delta^o}) again, if $\delta^o\geq 0$ (resp. $\delta^o\leq 0$) this solution is necessarily spacelike (resp. timelike) if $K_N>0$ and timelike (resp. spacelike) if $K_N<0.$

\textit{Fourth case: $\delta^o=0$:} then $\delta(u)=-K_N|u|^2,$ and $\delta(u)=0\Longleftrightarrow |u|^2=0.$ Note that $\vec{H}=0$ in that case: since $K_N\neq 0$ the point is not quasi-umbilic, and, by (\ref{expr delta}), $\delta^o(v)=\pm \mathsf{a}\beta(x^2+y^2)\mp 2\alpha\mathsf{b} xy.$  Since $\delta^o=0$ and $K_N=2\mathsf{a}\mathsf{b}\neq 0,$ we get $\alpha=\beta=0,$ i.e. $\vec{H}=0.$ 
\\

We describe the causal character of the asymptotic directions in the following table; in the first column appear the different possible values for the signature of $\delta^o.$ To simplify the presentation we suppose that $K_N\geq 0;$ if $K_N\leq 0,$ we just have to systematically exchange the words ``spacelike" and ``timelike" in the table. 

\vspace{.3cm}

\begin{tabular}{|c|c|c|}
\hline
$\mbox{signature of }\delta^o$
&$\begin{array}{c}disc(\delta)< 0\\\mbox{two distinct asymptotic}\\\mbox{directions which are}\end{array}$&$\begin{array}{c}disc(\delta)=0,\ \delta\neq 0\\\mbox{a double asymptotic}\\\mbox{direction which is}\end{array}$\\
\hline
(2,0)&
spacelike&spacelike\\
\hline
(0,2)&
timelike&timelike\\
\hline
(1,1)&$\mbox{1 spacelike - 1 timelike}$&$\mbox{Not possible}$\\
\hline
(1,0)&$\mbox{1 lightlike - 1 spacelike}$&$\begin{array}{c}\mbox{lightlike} \end{array}$\\
\hline
(0,1)&$\mbox{1 lightlike - 1 timelike}$&$\begin{array}{c}\mbox{lightlike} \end{array}$\\
\hline
(0,0)&$\begin{array}{c}\mbox{lightlike}\\\mbox{with }\vec{H}=0\end{array}$&\mbox{Not possible}\\
\hline
\end{tabular}

\vspace{.5cm}

We finish this section with a characterization of a quasi-umbilic point of a Lorentzian surface in terms of its asymptotic directions. This characterization is very similar to a result given in \cite{b_s_3}; since the proof is also very similar, we only state the result, and refer to \cite{b_s_3} for details:
\begin{theorem}
Assume that $p \in M$ is such that $\delta \neq 0.$ Then $p$ is a quasi-umbilic point if and only if there is a double lightlike asymptotic direction at $p$.  
\end{theorem}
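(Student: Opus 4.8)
The plan is to compare two algebraic descriptions of the quadratic form $\delta$ on $T_pM$ (equivalently, its traceless part $\delta^o$, which carries the information about the causal type of the solution directions), and to translate the geometric condition ``a double lightlike asymptotic direction'' into the analytic statement that the symmetric operator $u_{\delta^o}$ associated to $\delta^o$ is lightlike and has a null kernel coinciding with the double root of $\delta$. The starting point is the identities \eqref{inv delta} together with $disc(\delta^o)=\tfrac14 K_N^2-\Delta$: a \emph{double} asymptotic direction requires $disc(\delta)=-\Delta=0$ (so $\Delta=0$), hence $disc(\delta^o)=\tfrac14K_N^2\ge 0$, and the root is lightlike precisely when $disc(\delta^o)=0$ as well, i.e. $K_N=0$. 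So the forward implication ``$p$ quasi-umbilic $\Rightarrow$ double lightlike asymptotic direction'' will follow once one checks, from $\Delta=K_N=0$ (which hold at a quasi-umbilic point by Theorem \ref{th classification}, part 2), that $\delta\neq 0$ forces $\delta^o\neq 0$ with a null kernel; this is the content of the ``$\delta^o$ has signature $(1,0)$ or $(0,1)$, double root lightlike'' row of the table, specialized to $\vec H$ arbitrary. The converse requires showing that $disc(\delta)=0$ together with $\delta^o$ lightlike (so $K_N=0$, $\Delta=0$) and $\delta\ne 0$ cannot happen when $\mathrm{rank}\,f_{II}\le 2$ and $\Phi_{II}\ne0$, nor when $f_{II}=0$ (umbilic: then $\delta\equiv 0$), which leaves only the quasi-umbilic case $\Phi_{II}=0$, $f_{II}\ne 0$ of Definition \ref{def quasi-umbilic}.

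Concretely, I would first handle the cases with $\Phi_{II}\neq 0$ using the normal forms of $S_\nu$ established in Section \ref{section reduction quadratic map} and Propositions \ref{diag}, \ref{nodia}, compute $\delta$ via $dG(e_i)=II(e_i,e_i)\wedge e_2+e_1\wedge II(e_2,e_i)$ exactly as in the derivation of \eqref{expr delta}, and read off from \eqref{inv delta} that $disc(\delta)=-\Delta$ and $disc(\delta^o)=\tfrac14K_N^2-\Delta$; imposing $disc(\delta)=0$ and $disc(\delta^o)=0$ forces $\Delta=K_N=0$, and then one checks case-by-case (using the expressions \eqref{tr det phi} for $\Phi_{II}$ and the fact that $\Phi_{II}\ne0$) that $\delta\equiv 0$, contradicting $\delta\ne 0$. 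Then, in the remaining case $\Phi_{II}=0$, $f_{II}\ne0$, I would use the reduction \eqref{endo_vec_dist}: writing $S_\nu=L(\nu)I+\langle\mu_{II},\nu\rangle N$ with $N=N_1$ or $N_2$ a null operator, one computes $II(e_i,e_j)$ explicitly, substitutes into \eqref{delta v vp} (or the local formula for $dG$), and finds that $\delta$ is a perfect square of a linear form whose kernel is a null line of $T_pM$ — precisely a double lightlike asymptotic direction — and that $\delta\ne 0$ holds iff $\mu_{II}\ne 0$, i.e. iff $f_{II}\ne 0$. This simultaneously gives the forward implication and identifies the converse case.

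The main obstacle is the bookkeeping in the $\Phi_{II}\ne0$ subcases: one must show that $\Delta=0$ and $K_N=0$ together with $\Phi_{II}\ne 0$ \emph{force} $\delta\equiv 0$, which requires going through the three lines of the table in Section \ref{section reduction quadratic map} ($\mathrm{rank}\,f_{II}=2$, and $\mathrm{rank}\,f_{II}=1$ with $\Phi$ of signature $(1,0)$ or $(0,1)$) plus the non-diagonalizable subcases of Proposition \ref{nodia}, and in each one computing $\delta$ from the explicit $S_\nu$ and checking vanishing. This is exactly the kind of long elementary computation the paper elsewhere defers to \cite{b_s_3}, so I would organize it as: (i) reduce to the standard frame in which $II$ has the normal form \eqref{segundaforma1} or its analogues; (ii) record $\delta(e_i,e_j)$ in terms of the reduced parameters; (iii) observe that $\Delta=0$ and $K_N=0$ translate into relations among those parameters that kill every coefficient of $\delta$ unless $f_{II}$ degenerates to rank $\le 1$ with $\Phi_{II}=0$. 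Since the paper explicitly invokes the parallel result and proof in \cite{b_s_3}, the cleanest write-up is to state that the forward direction and the elimination of the $\Phi_{II}\ne0$ cases proceed verbatim as there, and to give in full only the short direct verification in the quasi-umbilic case using \eqref{endo_vec_dist} and \eqref{delta v vp}.
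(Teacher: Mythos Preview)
Your proposal is correct and follows the natural route the paper itself gestures at (it omits the proof entirely, deferring to \cite{b_s_3}). The key reduction---that a double lightlike asymptotic direction with $\delta\neq 0$ forces $\Delta=K_N=0$ via $disc(\delta)=-\Delta$ and $disc(\delta^o)=\tfrac14K_N^2-\Delta$, and conversely that these vanishing conditions together with $\delta\neq 0$ pin down the quasi-umbilic case---is exactly right, and your explicit computation of $\delta$ in the quasi-umbilic normal form \eqref{endo_vec_dist} yields $\delta(v)=c(x\pm y)^2$ as claimed.

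One streamlining: your case analysis for the converse is longer than necessary. Since $A_q(\nu_1,\nu_2)=\det_{(E_1,E_2)}(f_q(\nu_1),f_q(\nu_2))$ by \eqref{pull_f}, the condition $K_N=0$ already forces $\mathrm{rank}\,f_{II}\le 1$; so the rank-$2$ line of the table and the non-diagonalizable subcases of Proposition~\ref{nodia} never arise. You are left only with $\mathrm{rank}\,f_{II}=1$, $\Phi_{II}$ of signature $(1,0)$ or $(0,1)$, where a short direct computation (as you outline) shows $\delta$ has matrix $\left(\begin{smallmatrix}0&c\\c&0\end{smallmatrix}\right)$ or $\left(\begin{smallmatrix}c&0\\0&-c\end{smallmatrix}\right)$ in the adapted frame, with $\det_g\delta=c^2=\Delta$; hence $\Delta=0$ gives $\delta=0$, the desired contradiction. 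The umbilic case $f_{II}=0$ is immediate since then $II(v,v)$ and $II(v,v')$ are both multiples of $\vec H$, so $\delta\equiv 0$.
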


\subsection{Asymptotic directions and height functions}

Let us define the family of height functions on a Lorentzian surface $M$ in $\mathbb R^{2,2}$ as
$$H:\ M \times \mathbb R^{2,2}  \rightarrow \mathbb R,\ \ H(p,\nu)=\langle p,\nu\rangle+\ c,$$
where $c \in \mathbb R$. The function $h_\nu: M \rightarrow \mathbb R$ defined as $h_\nu= H(\cdot,\nu)$ is singular at $p \in M$, that is $dh_{{\nu}_p}=0$, if and only if $\nu$ is normal to $M$ at $p$. 
Consider also
$$Hess\ h_{\nu}:=\nabla dh_{\nu},$$
the Hessian of $h_{\nu},$ where $\nabla$ is here the Levi-Civita connection of $M$ acting on the 1-forms. We readily get that
\begin{equation}\label{relation h ff}
Hess\ h_{\nu}=II_{\nu}.
\end{equation}
We say that a non-zero normal vector $\nu$ at $p$ is a {\it binormal} vector if the quadratic form $Hess\ h_{\nu}$ is degenerate at $p$, and that a non-zero vector 
$v \in T_pM$ defines a {\it contact direction} if it belongs to the kernel of $Hess\ h_{\nu}$ at $p.$ 
Thus, by definition, $v$ is a contact direction with associated binormal vector $\nu$ if and only if the contact at $p$ between the surface and the hyperplane $\nu^{\perp}$ is of order $\geq 2$ in the direction $v.$ 

We now prove that $v\in T_pM$ is a contact direction if and only if it is an asymptotic direction. By (\ref{relation h ff}), we readily get the following result:
\begin{lemma}\label{zerocurvature}
A non-zero vector $v \in T_pM$ defines a contact direction if and only if  $S_{\nu}(v)=0$ for some non-zero vector $\nu$ normal to $M$ at $p$, where $S_{\nu}$ is the symmetric operator associated to the form $II_{\nu}$.  
\end{lemma}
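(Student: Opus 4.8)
The plan is to simply unwind the definitions; the only substantive ingredient is the non-degeneracy of the tangent metric.

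Recall that, by definition, a non-zero $v\in T_pM$ is a contact direction precisely when there is a non-zero normal vector $\nu$ at $p$ (a binormal vector) such that the quadratic form $Hess\ h_{\nu}$ is degenerate at $p$ and $v$ belongs to its kernel. By (\ref{relation h ff}) we have $Hess\ h_{\nu}=II_{\nu}$, and by the definition of $S_{\nu}$ (Section \ref{section invariants}, applied to $q=II$) the associated symmetric bilinear form satisfies $II_{\nu}(x,y)=\langle S_{\nu}(x),y\rangle$ for all $x,y\in T_pM$.

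First I would observe that, since the induced metric $\langle\cdot,\cdot\rangle$ on $T_pM$ is Lorentzian and hence non-degenerate, a vector $v\in T_pM$ lies in the kernel of the bilinear form $II_{\nu}$ — that is, $\langle S_{\nu}(v),y\rangle=0$ for every $y\in T_pM$ — if and only if $S_{\nu}(v)=0$; thus $\ker II_{\nu}=\ker S_{\nu}$. Next I would note that the degeneracy condition on $II_{\nu}$ needs no separate verification: if $v\neq 0$ satisfies $S_{\nu}(v)=0$, then $0\neq v\in\ker II_{\nu}$, so $II_{\nu}$ is automatically degenerate and $\nu$ is automatically a binormal vector. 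Combining these two observations, $v$ is a contact direction if and only if there exists a non-zero normal vector $\nu$ at $p$ with $v\in\ker II_{\nu}$, equivalently with $S_{\nu}(v)=0$, which is exactly the assertion of the lemma.

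The main (indeed the only) point requiring care is the identification of the kernel of the bilinear form $II_{\nu}$ with the kernel of the operator $S_{\nu}$, which uses the non-degeneracy of the tangent metric; the rest is a direct transcription of the definitions through (\ref{relation h ff}), in keeping with the ``we readily get'' preceding the statement.
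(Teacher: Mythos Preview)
Your proof is correct and follows exactly the approach the paper intends: the lemma is stated immediately after (\ref{relation h ff}) with the remark ``we readily get,'' and you have simply spelled out that immediate deduction by identifying $\ker II_{\nu}$ with $\ker S_{\nu}$ via the non-degeneracy of the tangent metric.
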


Observe that the normal vector $\nu$ given by the lemma is a binormal vector with associated contact direction $v$.

\begin{proposition}\label{lineasdecontacto}
A vector $v \in T_pM$ defines a contact direction if and only if it defines an asymptotic direction.
\end{proposition}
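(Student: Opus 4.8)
The plan is to show the two characterizations — "$v$ is an asymptotic direction" and "$v$ is a contact direction" — are literally the same condition, by unpacking both in terms of the symmetric operators $S_\nu$. By the Remark following Definition \ref{def asymptotic}, if $v,v'\in T_pM$ form a basis with $G(p)=v\wedge v'$, then $v$ is an asymptotic direction if and only if $II(v,v')$ and $II(v,v)$ are linearly dependent as vectors of $N_pM$. On the other hand, by Lemma \ref{zerocurvature}, $v$ is a contact direction if and only if $S_\nu(v)=0$ for some non-zero normal vector $\nu$, which by the defining relation $\langle S_\nu(v),x\rangle=\langle II(v,x),\nu\rangle$ (for all $x$) is equivalent to saying $II(v,x)\perp\nu$ for all $x\in T_pM$, i.e. the plane $\mathrm{span}\{II(v,v),II(v,v')\}\subset N_pM$ is contained in the hyperplane (line) $\nu^\perp\cap N_pM$.

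So the key step is the equivalence: $II(v,v)$ and $II(v,v')$ are linearly dependent $\iff$ there exists a non-zero $\nu\in N_pM$ orthogonal to both. Concretely, I would fix a positively oriented orthonormal basis $(v,v')$ of $T_pM$ (so that $G(p)=v\wedge v'$, after rescaling) and let $W:=\mathrm{span}\{II(v,v),II(v,v')\}\subseteq N_pM$. Then $v$ is asymptotic iff $\dim W\le 1$, and $v$ is a contact direction iff $W\subseteq\nu^\perp$ for some $\nu\neq 0$, i.e. iff $W^\perp\cap N_pM\ne\{0\}$. Since $N_pM$ is a $2$-dimensional vector space equipped with a \emph{non-degenerate} (Lorentzian) inner product, for any subspace $W\subseteq N_pM$ one has $\dim W+\dim W^\perp=2$; hence $W^\perp\neq\{0\}$ exactly when $\dim W\le 1$. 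This gives the equivalence on the nose, with no genericity hypothesis needed.

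The one point requiring a little care — and the place I expect the only genuine friction — is the subtlety that a non-zero vector can be orthogonal to itself in a Lorentzian space, so one cannot argue via "orthogonal complement of a line is a line" using a basis-free perpendicularity picture alone; one must use the dimension formula $\dim W + \dim W^\perp = 2$, which holds for \emph{all} subspaces of a non-degenerate $2$-dimensional inner product space regardless of the causal type of $W$. In particular even if $W$ is a lightlike line (so $W\subseteq W^\perp$), we still have $\dim W^\perp=1$, so a suitable binormal $\nu$ exists (and in that degenerate case $\nu$ spans $W$ itself, reflecting the lightlike asymptotic direction). I would state this dimension fact explicitly as the crux of the argument.

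Thus the proof is short: combine the Remark after Definition \ref{def asymptotic} (asymptotic $\iff$ $II(v,v),II(v,v')$ linearly dependent $\iff\dim W\le 1$) with Lemma \ref{zerocurvature} (contact $\iff S_\nu v=0$ for some $\nu\ne 0$ $\iff W\subseteq\nu^\perp$ for some $\nu\ne0$ $\iff\dim W^\perp\ge1$), and close via $\dim W+\dim W^\perp=2$ in the Lorentzian plane $N_pM$. No case analysis on the invariants is required, and the statement holds at every point $p$, not only under hypothesis (\ref{simple hyp}); the explicit equation (\ref{delta non qo}) of Proposition \ref{prop asymptotic lines} is then just a coordinate incarnation of this in the special case.
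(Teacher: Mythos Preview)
Your proof is correct and follows essentially the same route as the paper: both arguments reduce the equivalence to the statement that the image of the linear map $II(v,\cdot):T_pM\to N_pM$ has dimension at most $1$ if and only if there exists a non-zero $\nu\in N_pM$ orthogonal to this image, and then invoke Lemma~\ref{zerocurvature} together with the relation $\langle II(v,w),\nu\rangle=\langle S_\nu(v),w\rangle$. Your explicit emphasis on the dimension formula $\dim W+\dim W^\perp=2$ in the non-degenerate Lorentzian plane $N_pM$ (and the remark that it holds even when $W$ is lightlike) is a welcome clarification, but it is exactly the step the paper takes implicitly when it passes from ``image not all of $N_pM$'' to ``there exists a non-trivial $\nu$ normal to the image''. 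One small cosmetic point: you need not restrict to an orthonormal pair $(v,v')$; the Remark after Definition~\ref{def asymptotic} only requires $v\wedge v'=G(p)$, and in particular allows $v$ to be lightlike.
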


\begin{proof}
Recalling (\ref{delta v vp}), $\delta(v)= 0$ if and only if $II(v,v')$ and $II(v,v)$ are linearly dependent, that is, if and only if the linear map $II(v,\cdot):T_pM \rightarrow N_pM$ 
has a non trivial kernel; this is equivalent to the existence of a non trivial vector $\nu \in N_pM$ normal to the image of this map, i.e. such that $\langle II(v,\cdot), \nu\rangle=0.$ Since 
\begin{eqnarray*}
\langle II(v,w), \nu\rangle= \langle S_{\nu}(v), w\rangle
\end{eqnarray*}   
for all $w \in T_pM,$ we conclude that $v$ defines an asymptotic direction if and only if $S_{\nu}(v)=0$ for some non-zero normal vector $\nu.$  By Lemma \ref{zerocurvature} this also characterizes a contact direction.
\end{proof}
\begin{remark} 
The notion of contact direction has been used before in different settings, see for instance \cite{C-M-RF}, \cite{b_s_3}. It is usually used to define the notion of asymptotic direction. In this paper, we rather defined the asymptotic directions by means of the Gauss map, and finally proved that the two notions coincide.
\end{remark}

\subsection{Asymptotic and mean directionally curved lines}

Now let us analyze {\it the mean directionally curved field of directions}, studied for surfaces immersed in $\mathbb R^4$ in \cite{Mello} and \cite{Tari}, and for timelike surfaces in Minkowski space $\mathbb R^{3,1}$ in \cite{b_s_3}. In $\R^{2,2}$ these directions are defined as the pull-back by the second fundamental form of the intersection points in the normal plane of the curvature hyperbola with the line generated by the mean curvature vector. More precisely, the condition is \begin{eqnarray}\label{Meancurveddirections}
[\vec H, II(v)]=0,
\end{eqnarray}  
where the brackets stand for the determinant of the vectors in a positively oriented and orthonormal basis of the normal plane. This is also $[\vec H, II^0(v)]=0,$ where $II^0$ is the traceless part of the second fundamental form. For sake of simplicity, here again we assume that 
$$(|\H|^2-K)^2-K_N^2>0\hspace{.5cm}\mbox{and}\hspace{.5cm} |\H|^2-K>0;$$ 
under these assumptions, in a positively oriented and orthonormal basis $(e_1,e_2)$ of $T_pM$ (see Proposition \ref{prop asymptotic lines} and Remark \ref{rmk interp basis} above), the second fundamental form is given by (\ref{segundaforma1}), and (\ref{Meancurveddirections}) reads
\begin{eqnarray*}
[\H, \mathsf{a}(2xy)u_1+\mathsf{b}(x^2 + y^2) u_2]=0.
\end{eqnarray*}
Thus, we obtain the following intrinsic equation of these directions:
\begin{proposition}
In $(e_1,e_2),$ the equation of the mean directionally curved directions in terms of the invariants $\mathsf{a},$ $\mathsf{b}$, $\alpha$ and $\beta$ is
\begin{eqnarray}
\alpha \mathsf{b}(x^2+y^2)-2\mathsf{a}\beta xy=0.
\end{eqnarray}
\end{proposition}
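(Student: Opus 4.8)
The plan is simply to expand the determinant in the defining condition (\ref{Meancurveddirections}) once both arguments are written in the orthonormal normal frame $(u_1,u_2)$. First I would recall that, under the standing assumptions $(|\H|^2-K)^2-K_N^2>0$ and $|\H|^2-K>0$, the traceless second fundamental form in the basis $(e_1,e_2)$ of Proposition \ref{prop asymptotic lines} satisfies, for $v=xe_1+ye_2$,
\begin{equation*}
II^0(v)=\mp\left(2\mathsf{a}\,xy\,u_1+\mathsf{b}\,(x^2+y^2)\,u_2\right),
\end{equation*}
which is exactly the expression already displayed just before the statement (it follows at once from $II^0(e_1)=II^0(e_2)=\mp\mathsf{b}u_2$ and $II^0(e_1,e_2)=\mp\mathsf{a}u_1$ in (\ref{basis adapted axes})); the overall sign is immaterial, since the condition is homogeneous and set equal to zero. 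Next I would use the decomposition $\H=\alpha u_1+\beta u_2$, which is precisely how the invariants $\alpha$ and $\beta$ were introduced in Proposition \ref{diag}, together with the fact that $(u_1,u_2)$ is a positively oriented orthonormal basis of $N_pM$, so that the bracket $[\cdot,\cdot]$ is the determinant of the coordinate columns in this basis.

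The computation is then immediate:
\begin{equation*}
[\H,II^0(v)]=\mp\det\begin{pmatrix}\alpha & 2\mathsf{a}\,xy\\ \beta & \mathsf{b}\,(x^2+y^2)\end{pmatrix}=\mp\left(\alpha\mathsf{b}\,(x^2+y^2)-2\mathsf{a}\beta\, xy\right).
\end{equation*}
Since a non-zero $v$ is a mean directionally curved direction exactly when $[\H,II(v)]=0$, equivalently $[\H,II^0(v)]=0$ (as already noted in the text, the two conditions agree because $II(v)-II^0(v)=|v|^2\H$ is collinear with $\H$), this vanishes if and only if $\alpha\mathsf{b}\,(x^2+y^2)-2\mathsf{a}\beta\, xy=0$, which is the asserted equation in $(e_1,e_2)$.

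There is essentially no obstacle here: all the substance has been placed earlier, namely the normal form (\ref{segundaforma1}) of the second fundamental form, the expression of $II^0(v)$ displayed before the statement, and the identification of $(\alpha,\beta)$ with the coordinates of $\H$ in the eigenbasis of $U_\Phi$ (Proposition \ref{diag}); the invariance under sign choices is exactly the ``essentially determine'' caveat already in force. The only point deserving a line of care is the reduction from $[\H,II(v)]=0$ to $[\H,II^0(v)]=0$, for which one records that $\langle II(v)-II^0(v),\nu\rangle=\langle\H,\nu\rangle|v|^2$ for every normal $\nu$ by the definition of $II^0$ as the traceless part, so that $II(v)-II^0(v)=|v|^2\H$ and the two determinants against $\H$ coincide.
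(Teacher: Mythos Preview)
Your proof is correct and follows essentially the same route as the paper: the paper also reduces $[\vec H,II(v)]=0$ to $[\vec H,II^0(v)]=0$, computes $II^0(v)$ from the normal form (\ref{segundaforma1}) in the basis $(e_1,e_2)$, and then expands the $2\times 2$ determinant against $\vec H=\alpha u_1+\beta u_2$. Your write-up is slightly more explicit in justifying $II(v)-II^0(v)=|v|^2\vec H$, but the argument is the same.
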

Moreover, using this equation and the expression (\ref{expr delta}) of $\delta,$ we deduce the following:
\begin{lemma}
Equation $(\ref{Meancurveddirections})$ is equivalent to
\begin{equation}\label{equation mean delta}
\delta(v,v^*)=0,
\end{equation}
where $v= x e_1 + y e_2$ and $v^*= y e_1 + x e_2$ in a positively oriented and orthonormal basis $(e_1,e_2)$ of $T_pM$. 
\end {lemma}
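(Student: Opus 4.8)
The plan is to verify the equivalence by a direct comparison of the two quadratic expressions in $(x,y)$, using the already-established formulas for $\delta$ and for the mean directionally curved directions under the standing hypothesis $(|\vec H|^2-K)^2-K_N^2>0$ and $|\vec H|^2-K>0$. First I would recall that, in the positively oriented orthonormal basis $(e_1,e_2)$ of $T_pM$ adapted to the axes of the curvature hyperbola, Proposition \ref{prop asymptotic lines} gives, up to the sign ambiguity,
$$\delta(v)=\pm\mathsf{a}(\beta\pm\mathsf{b})x^2\pm\mathsf{a}(\beta\mp\mathsf{b})y^2\mp2\alpha\mathsf{b}\,xy$$
for $v=xe_1+ye_2$; the bilinear form $\delta(\cdot,\cdot)$ associated to this quadratic form is obtained by polarization, so that for $v=xe_1+ye_2$ and $v^*=ye_1+xe_2$ one has
$$\delta(v,v^*)=\pm\mathsf{a}(\beta\pm\mathsf{b})\,xy\pm\mathsf{a}(\beta\mp\mathsf{b})\,xy\mp\alpha\mathsf{b}(x^2+y^2).$$

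The next step is the one clean computation: the first two terms combine to give $\pm\mathsf{a}\bigl[(\beta\pm\mathsf{b})+(\beta\mp\mathsf{b})\bigr]xy=\pm2\mathsf{a}\beta\,xy$, since the $\mathsf{b}$-contributions cancel regardless of which sign choice was made. Hence
$$\delta(v,v^*)=\mp\alpha\mathsf{b}(x^2+y^2)\pm2\mathsf{a}\beta\,xy=\mp\bigl(\alpha\mathsf{b}(x^2+y^2)-2\mathsf{a}\beta\,xy\bigr),$$
which is (up to the overall nonzero sign) exactly the left-hand side of the equation of the mean directionally curved directions given in the previous proposition. Therefore $\delta(v,v^*)=0$ holds if and only if $\alpha\mathsf{b}(x^2+y^2)-2\mathsf{a}\beta\,xy=0$, which is precisely equation (\ref{Meancurveddirections}) in the basis $(e_1,e_2)$; this proves the lemma.

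I expect the only real subtlety, rather than an obstacle, to be bookkeeping of the sign choices: the formula for $\delta$ carries the ambiguous signs inherited from the choice of $\mathsf{a}=\pm\sqrt{a}$ and from the reduction in Proposition \ref{diag}, and one must check that the cancellation of the $\mathsf{b}$ terms in $\delta(v,v^*)$ is insensitive to these choices — which it is, because the two mixed terms always receive opposite $\mathsf{b}$-signs. One should also note that the identification of $\delta$ as a quadratic form with values in $\R$ (via the volume element $e_1\wedge e_2\wedge e_3\wedge e_4$) is what makes the polarization step legitimate; this was fixed just after Definition \ref{def asymptotic}. With these remarks the proof is essentially the display above, so I would keep it to a couple of lines.
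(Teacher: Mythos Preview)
Your argument is correct and is essentially the approach indicated in the paper: the lemma is stated as a direct consequence of the intrinsic equation of the mean directionally curved directions together with the explicit expression (\ref{expr delta}) of $\delta$, and your polarization computation is exactly the verification this entails. The sign bookkeeping you flag is indeed the only point requiring care, and you handle it correctly.
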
 
\begin{corollary}
Under the hypotheses above, the mean directionally curved directions bisect the asymptotic directions.
\end{corollary}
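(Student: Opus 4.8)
The plan is to read the statement off the intrinsic equation $(\ref{equation mean delta})$, together with the elementary fact that the eigendirections of a symmetric operator on the Lorentzian plane $T_pM$ bisect the null directions of the associated quadratic form. By $(\ref{equation mean delta})$, a non-null direction $\R v$ is mean directionally curved if and only if $\delta(v,v^{*})=0$, where $v^{*}=ye_1+xe_2$ for $v=xe_1+ye_2$. A one-line computation gives $\langle v,v^{*}\rangle=-xy+xy=0$ and $|v^{*}|^{2}=-|v|^{2}$, so $v^{*}$ is a non-null vector orthogonal to $v$, and therefore spans the line of $T_pM$ orthogonal to $v$. Writing $\delta(v,\cdot)=\langle u_\delta(v),\cdot\rangle$, where $u_\delta$ is the symmetric operator of $T_pM$ associated to $\delta$ by the metric, the condition $\delta(v,v^{*})=0$ says that $u_\delta(v)$ is orthogonal to $v^{*}$, hence that $u_\delta(v)$ lies in the orthogonal complement of $v^{*}$, which is $\R v$. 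So $u_\delta(v)$ is collinear with $v$: \emph{the mean directionally curved directions are exactly the eigendirections of $u_\delta$} (equivalently, of its traceless part $u_{\delta^o}$).

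Next I would reduce $u_\delta$ in a positively oriented orthonormal eigenbasis $(u_1,u_2)$ of $T_pM$, which is legitimate in the generic case: combining $(\ref{relacionesentreinvariantes})$ and $(\ref{inv delta})$ gives $disc(\delta^o)=\frac{1}{4}K_N^2-\Delta=\mathsf{a}^2\beta^2-\mathsf{b}^2\alpha^2$, so, away from the degenerate sub-cases, $u_{\delta^o}=u_\delta-\frac{1}{2}(\tr_g\delta)\,I$ is spacelike in $\mathcal{S}$ and hence diagonalizable, with two distinct real, non-null eigendirections (Remark \ref{form_cano}). Writing $u_\delta=\mathrm{diag}(\lambda_1,\lambda_2)$ in $(u_1,u_2)$ with $|u_1|^2=-1$ and $|u_2|^2=1$, the equation $\delta(v)=0$ of the asymptotic directions becomes $-\lambda_1 p^2+\lambda_2 q^2=0$ for $v=pu_1+qu_2$; hence, when they exist (i.e. when $\lambda_1\lambda_2=\Delta>0$), the asymptotic directions are the two lines $\R(u_1\pm\sqrt{\lambda_1/\lambda_2}\,u_2)$. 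The orthogonal reflection of $T_pM$ across $\R u_1$ fixes $u_1$ and reverses $u_2$, hence interchanges these two lines; likewise for the orthogonal reflection across $\R u_2$. Reading ``bisect'' in the natural sense that a line bisects a pair of lines when the orthogonal reflection across it swaps them, this means the eigendirections $\R u_1$ and $\R u_2$ of $u_\delta$ bisect the asymptotic directions; by the first step, these are the mean directionally curved directions.

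I expect the one genuinely delicate point to be the formalization of ``bisect'' on the Lorentzian plane: I would adopt the reflection characterization used above, which sidesteps any awkward notion of hyperbolic angle and treats uniformly the various causal types of asymptotic directions listed in the preceding table. The remaining degenerate sub-cases allowed by $(\ref{simple hyp})$ — those in which $u_{\delta^o}$ is lightlike or zero, so that one or both of the two families of directions degenerates or becomes null — are to be checked directly from $(\ref{expr delta})$ and from the displayed equation $\alpha\mathsf{b}(x^2+y^2)-2\mathsf{a}\beta xy=0$ of the mean directionally curved directions; alternatively one could carry out the whole argument computationally, substituting these two equations into the reflection criterion, but the operator-theoretic route above is shorter and more transparent.
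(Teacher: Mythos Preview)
Your argument is correct. The paper's proof is shorter and more direct: it takes the two mean directionally curved directions $v,v^{*}$, normalizes so that $|v|^{2}=-|v^{*}|^{2}=\pm1$, and then parametrizes unit directions by the hyperbolic angle $v_{\psi}=\cosh\psi\,v+\sinh\psi\,v^{*}$; since $\delta(v,v^{*})=0$ one gets $\delta(v_{\psi})=\cosh^{2}\psi\,\delta(v)+\sinh^{2}\psi\,\delta(v^{*})$, which is even in $\psi$, so $v_{\psi}$ is asymptotic iff $v_{-\psi}$ is. Your route is different in that you first extract the conceptual content of $(\ref{equation mean delta})$: because $v^{*}\perp v$, the condition $\delta(v,v^{*})=0$ says precisely that $v$ is an eigendirection of the symmetric operator $u_{\delta}$, so the mean directionally curved lines are the principal axes of $\delta$; bisection then follows from the reflection symmetry of the null cone of $\delta$ across its own axes. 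What your approach buys is an invariant characterization of the mean directionally curved lines (eigendirections of $u_{\delta}$) that the paper does not state, and a definition of ``bisect'' via orthogonal reflection that avoids hyperbolic angles; what the paper's approach buys is brevity and the avoidance of any diagonalizability discussion, since it works directly in the basis $(v,v^{*})$ without needing to invoke Remark~\ref{form_cano}. The two are equivalent at bottom: your reflection across an eigenline is exactly the paper's $\psi\mapsto-\psi$.
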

\begin{proof}
Let $v,v^*$ be the mean directionally curved directions. Assuming moreover that 
$$|v|^2=-|v^*|^2=\pm 1,$$ 
the vectors $v$ and $v^*$ form a Lorentzian basis of $T_pM;$ then, a unit direction
$$v_\psi:=\cosh\psi\ v+\sinh\psi\ v^*,\hspace{.5cm}\psi\in\R$$
is an asymptotic direction if and only if
$$\delta(v_{\psi})=\cosh^2\psi\ \delta(v)+\sinh^2\psi\ \delta(v^*)=0$$
(Definition \ref{def asymptotic} and Equation (\ref{equation mean delta})). Thus $v_{\psi}$ is an asymptotic direction if and only if so is $v_{-\psi},$ which gives the result.
\end{proof}

\subsection{Asymptotic directions on Lorentzian surfaces in Anti de Sitter space}

Let us apply these results to the analysis of the Lorentzian surfaces immersed in the {\it Anti de Sitter 3-space}. This space is defined by
$${\mathbb H}_1^3=\{ x \in \mathbb R^{2,2}:\ \langle x,x\rangle=-1 \}.$$
It is the 3-dimensional Lorentzian space form with negative curvature. The geometry of Lorentzian surfaces in this space has been studied with an approach of Singularities 
by analyzing the contacts of the surfaces with some models \cite{Ch-I}. 
Following \cite{I-Y}, we consider the {\it $\phi-$de Sitter height function} defined on a Lorentzian surface $M$ in ${\mathbb H}_1^3$ by
$$H_{\phi}: M \times  S^3_2(\sin^2 \phi) \rightarrow \mathbb R,\ \ H_{\phi}(p,\nu)=\langle p,\nu\rangle+\ \cos \phi,\ \ \phi \in [0, \pi/2],$$
where $S^3_2(\sin^2 \phi):=\{x \in \mathbb R^{2,2}:\ \langle x,x\rangle=\sin^2 \phi \}$ is the pseudo sphere with index $2$ centered at the origin and with radius $\sin^2 \phi$ if $\phi \neq 0;$ if $\phi=0$ this set is the null cone at the origin $\{x \in \mathbb R^{2,2}:\ \langle x,x\rangle=0 \}$. 

Let $\varphi: U \rightarrow  {\mathbb H}_1^3$ be an immersion of an open set $U \subset \mathbb R^2$ 
with coordinates $u=(u_1,u_2)$, whose image $M= \varphi(U)$ is a Lorentzian surface. The vector field  
$$N(u)=\frac{\varphi(u)\wedge \varphi_{u_1}(u)\wedge \varphi_{u_2}(u)}{| \varphi(u)\wedge \varphi_{u_1}(u)\wedge \varphi_{u_2}(u) |}$$
is by definition unitary, normal to $M$ and tangent to  ${\mathbb H}_1^3,$ i.e. is such that $(\varphi_{u_1}, \varphi_{u_2}, \varphi, N)$ is a frame on $M$ whose first two vectors generate the tangent bundle and the last two vectors the normal bundle of $M$ in $\R^{2,2}.$ The $\phi_{\pm}$- de Sitter duals of $M$ are defined as
\begin{eqnarray*}
N_{\pm}^{\phi}: U \rightarrow S^3_2(\sin^2 \phi),\ \ \ N_{\pm}^{\phi}(u)= \cos\phi\ \varphi(u) \pm  N(u).  
\end{eqnarray*} 
Since $N_+^{\phi}(u)= -N_-^{\phi + \phi/2}(u)$ we only consider $\phi \in I= [0, \pi/2]$.
 
The family of height functions $(H_{\phi})_{\phi\in I}$ is a generating family of a natural Legendrian embedding of the surface into a contact manifold $\Delta_{21}^{\pm}$ whose structure is similar to that defined in \cite{I-Y}. Moreover, the image of the $\phi_{\pm}$- de Sitter dual is the wave front set of this Legendrian map . Furthermore, the fields $N_{\pm}^{\phi}$ are normal to $M$ and the $\phi_{\pm}$-Gauss-Kronecker curvature at each point of $M$ is defined as the determinant of the linear operator $-dN_{\pm}^{\phi}:T_pM \rightarrow T_pM$. A point $p$ where this curvature vanishes is called a {\it $N_{\pm}^{\phi}$-parabolic} point; such a point is characterized as a point where the normal $N_{\pm}^{\phi}$ is a binormal vector.

The results proved at the beginning of the section imply a rigidity property of the contact directions associated to the different binormal de Sitter duals $N^{\phi}_{\pm}$ parameterized by $\phi$. Indeed,  if $N^{\phi}_{+}$ is a family of binormal vectors at $p$ parameterized by $\phi \in I,$ then there is a family of contact directions $v_{\phi}$ associated to the corresponding family of height functions. 
If the discriminant of the form $\delta$ satisfies $\Delta_p=0$, there is only one asymptotic direction, and Proposition $\ref{lineasdecontacto}$ implies that $v_{\phi}$ coincides with it for any $\phi$. If $\Delta_p>0$, there are two asymptotic lines at $p,$ $l_1$ and $l_2$ say. We assert that $l_1$ (or $l_2$) is a contact direction of one of the two families of binormals $(N^{\phi}_{+})_{\phi \in [0,2\pi]}$ or $(N^{\phi}_{-})_{\phi \in [0,2\pi]},$ that is, is a contact direction for the binormals $N^{\phi}_{+}$ \emph{for all $\phi \in [0,2\pi]$}, or is a contact direction for the binormals $N^{\phi}_{-}$ \emph{for all $\phi \in [0,2\pi]$}. Indeed, let  $v_0$ be the contact direction of the height function defined by the binormal $N^{\phi_0}_{+}$ for some $\phi_0 \in I,$ and let $(\phi_k)_{k \in \mathbb N}$ be a real sequence converging to $\phi_0;$ we can choose, associated to each shape operator of the sequence $(S_{\phi_k})_{k \in \mathbb N}$ defined by the binormals  $(N^{\phi_k}_{+})_{k \in \mathbb N}$, an eigenvector $v_k$ corresponding to its null eigenvalue and such that the sequence $v_k$ converges to $v_0.$ The possibility of such a continuous choice implies the following:

\begin{proposition}
Let $p$ be a $N^{\phi}_+$-parabolic point on $M$ for all $\phi \in I$.
Then, the contact directions of the height functions defined by the binormal vectors $N^{\phi}_+$, parameterized by $\phi \in I$ at $p$ coincide.   
\end{proposition}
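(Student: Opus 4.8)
The plan is to deduce the statement from Proposition \ref{lineasdecontacto} (contact directions coincide with asymptotic directions) together with an elementary connectedness argument. The essential observation is that, although the binormal $N^{\phi}_+$ and hence the symmetric operator $S_{N^{\phi}_+}$ whose kernel defines the contact direction do vary with $\phi,$ the \emph{set} of asymptotic directions at $p$ does not: it is determined by the quadratic form $\delta$ at $p$ alone.

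First I would note that, since $p$ is $N^{\phi}_+$-parabolic for every $\phi\in I,$ the operator $S_{N^{\phi}_+}$ is degenerate, so its kernel is a nonzero subspace of $T_pM;$ any nonzero vector in it is a contact direction for $h_{N^{\phi}_+},$ hence, by Proposition \ref{lineasdecontacto}, an asymptotic direction at $p.$ In particular asymptotic directions exist at $p,$ i.e. $\Delta_p\geq 0.$ If $\Delta_p=0$ there is a single asymptotic direction at $p,$ and the statement is immediate since every contact direction must coincide with it. So the case to be settled is $\Delta_p>0,$ in which there are exactly two asymptotic directions $l_1\neq l_2.$ Here $\ker S_{N^{\phi}_+}$ cannot be $2$-dimensional, for otherwise $S_{N^{\phi}_+}\equiv 0$ and every direction would be a contact direction, hence asymptotic, contradicting the existence of exactly two asymptotic directions; thus $\ker S_{N^{\phi}_+}$ is a line, and the contact direction $v_{\phi}$, namely the line $\ker S_{N^{\phi}_+}$, is a well-defined point of $\mathbb{P}(T_pM)$ lying in the two-point set $\{l_1,l_2\}.$

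Next I would verify that $\phi\mapsto v_{\phi}$ is continuous on $I.$ The normals $N^{\phi}_+=\cos\phi\,\varphi+N$ depend smoothly on $\phi,$ hence so do the operators $S_{N^{\phi}_+};$ and for a $2\times2$ matrix depending continuously on a parameter, with identically vanishing determinant but never identically zero, the one-dimensional kernel depends continuously on the parameter (it is read off from any nonzero row, the two local expressions agreeing on their common domain by virtue of the vanishing of the determinant). This is exactly the continuous choice of null eigenvectors $v_k\to v_0$ invoked in the discussion preceding the statement. Since $I=[0,\pi/2]$ is connected and $\{l_1,l_2\}$ is discrete, the continuous map $\phi\mapsto v_{\phi}$ must be constant, which is precisely the assertion.

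The one point requiring a little care — and the main, though mild, obstacle — is the continuity of $\phi\mapsto\ker S_{N^{\phi}_+}$: it relies on this kernel being exactly one-dimensional for every $\phi\in I,$ which is where Proposition \ref{lineasdecontacto} and the hypothesis $\Delta_p>0$ enter, to exclude the degenerate possibility $S_{N^{\phi}_+}\equiv 0.$ Once this is secured, the remainder of the argument is purely topological and needs no further computation with the second fundamental form.
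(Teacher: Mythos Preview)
Your proposal is correct and follows essentially the same approach as the paper: reduce to Proposition~\ref{lineasdecontacto}, dispose of the case $\Delta_p=0$ by uniqueness of the asymptotic direction, and in the case $\Delta_p>0$ use that the contact direction depends continuously on $\phi$ and takes values in the two-point set $\{l_1,l_2\}$, hence is constant on the connected interval $I$. Your write-up is in fact somewhat more careful than the paper's, since you explicitly rule out the possibility $S_{N^{\phi}_+}\equiv 0$ (ensuring the kernel is a line) before invoking continuity; the paper's discussion preceding the proposition leaves this implicit.
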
 

\section{Quasi-umbilic surfaces in $\R^{2,2}$}\label{section quasi umbilic}
\subsection{Description of the quasi-umbilic surfaces}\label{subsection quasi umbilic}
Quasi-umbilic (Lorentzian) surfaces in 3 and 4-dimensional Minkowski space were described in \cite{c} and \cite{b_s_3} respectively. We are interested here in quasi-umbilic surfaces in $\R^{2,2}:$ similarly to \cite{b_s_3}, we will say that a Lorentzian surface $M$ in $\R^{2,2}$ is quasi-umbilic if its second fundamental form is quasi-umbilic at every point of $M,$ which means that the curvature hyperbola degenerates to a straight line with one point removed at every point of $M$, or equivalently that  
\begin{equation}\label{inv_casi-umb} 
|\vec{H}|^2=K \hspace{0.2in}\text{and}\hspace{0.2in}K_N=\Delta=0 
\end{equation} 
together with 
\begin{equation}
\Phi_{II}=0\hspace{1cm}\mbox{and}\hspace{1cm}II\neq\vec{H}g 
\end{equation}
on $M;$ see Proposition \ref{imagen_fq_2} \textit{1}- above. Similarly to \cite[Theorem 5.1]{b_s_3}, the quasi-umbilic surfaces in $\R^{2,2}$ are described as follows:
\begin{theorem}\label{sup_umb_casi-umb} A Lorentzian surface $M$ in $\R^{2,2}$ is umbilic or quasi-umbilic if and only if it is parameterized by \begin{equation}\label{paramet} \psi(s,t)=\gamma(s)+tT(s)\end{equation}
where $\gamma$ is a lightlike curve in $\R^{2,2}$ and $T$ is some lightlike vector field along $\gamma$ such that  $\gamma'(s)$ and $T(s)$ are independent for all value of $s.$
\end{theorem}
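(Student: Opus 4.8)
The plan is to characterize umbilic and quasi-umbilic points by the existence of a distinguished lightlike tangent direction along which the second fundamental form vanishes, and then to observe that such a direction forces $M$ to be ruled by straight null lines; conversely, a parameterization of the stated form exhibits exactly such a direction. The argument is local and parallels \cite[Theorem 5.1]{b_s_3}: no Gauss or Codazzi equation is needed, only the submanifold geometry in the flat space $\R^{2,2}$ together with the integration of line fields.

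For the implication ``$\Leftarrow$'' I would start from a parameterization $\psi(s,t)=\gamma(s)+tT(s)$ with $\gamma,T$ lightlike and $\gamma',T$ independent. Since $\R^{2,2}$ is flat, $II(\partial_t,\partial_t)$ is the normal part of $\psi_{tt}$, which vanishes because $T$ does not depend on $t$; thus $II(\partial_t,\partial_t)=0$. As $\partial_t=T$ is lightlike, $\la S_\nu(\partial_t),\partial_t\ra=\la II(\partial_t,\partial_t),\nu\ra=0$ for every normal $\nu$, and since the orthogonal complement of a lightlike vector in a Lorentzian plane is the null line it spans, $\partial_t$ is an eigenvector of each $S_\nu$. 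An elementary computation then shows that the traceless part $S_\nu^0$ is nilpotent --- hence lightlike in $\mathcal{S}$ --- and that all the $S_\nu^0$ span the same null line $L_0\subset\mathcal{S}$, namely the one whose kernel is $\R\partial_t$. Hence $f_{II}(N_pM)\subset L_0$, so $\mathrm{rank}\,f_{II}\leq 1$ and, by the first identity in (\ref{pull_f}), $\Phi_{II}(\nu)=|f_{II}(\nu)|^2=0$; by Definition \ref{def quasi-umbilic}, $II$ is quasi-umbilic where $f_{II}\neq 0$ and umbilic where $f_{II}=0$. (This step uses only that $T$ is lightlike; the hypothesis that $\gamma$ is lightlike guarantees, together with the independence, that the induced metric is Lorentzian, i.e. $\la\gamma',T\ra\neq 0$.)

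For the implication ``$\Rightarrow$'', assume $M$ is umbilic or quasi-umbilic. First I would produce at each point a lightlike tangent direction $v_0$ with $II(v_0,v_0)=0$: in the quasi-umbilic case Lemma \ref{vec_dist} gives an orthonormal positively oriented basis $(e_1,e_2)$ of $T_pM$ with $S_\nu=L_{II}(\nu)I+\la\mu_{II},\nu\ra N$, $N=N_1$ or $N_2$; taking $v_0$ to span the null line $\ker N$, one gets $S_\nu(v_0)=L_{II}(\nu)v_0$ for all $\nu$, hence $\la II(v_0,v_0),\nu\ra=L_{II}(\nu)\la v_0,v_0\ra=0$ and $II(v_0,v_0)=0$; in the umbilic case $II(X,X)=\la X,X\ra\H$, so any null $v_0$ works. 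This yields a smooth lightlike line field $\ell$ on $M$ with $II(v,v)=0$ along $\ell$; let $\ell'$ be the other null line field, which exists because a Lorentzian plane has exactly two null lines. The key observation is that the leaves of $\ell$ are straight line segments of $\R^{2,2}$: choosing a section $V$ of $\ell$ with $\nabla_VV=0$ (possible because $\nabla_{\tilde V}\tilde V$ is automatically proportional to $\tilde V$, since $\la\nabla_{\tilde V}\tilde V,\tilde V\ra=\tfrac12\tilde V\la\tilde V,\tilde V\ra=0$, so it suffices to rescale $\tilde V$ along each leaf by solving a linear ODE), an integral curve $c$ of $V$ satisfies $c''=\nabla_{c'}c'+II(c',c')=0$ in $\R^{2,2}$, hence is affine. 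Finally I would take $\gamma$ to be an integral curve of $\ell'$: it is a lightlike curve on $M$, transverse to $\ell$, so $\gamma'$ and $T(s):=V(\gamma(s))$ are independent lightlike vectors, and $\psi(s,t):=\gamma(s)+tT(s)$, which for small $t$ reparameterizes the leaf of $\ell$ through $\gamma(s)$, is (being an immersion near $t=0$) a local parameterization of $M$ of the desired form.

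I expect the delicate point to be the smooth and globally coherent construction of the distinguished line field $\ell$: one must check that the alternative $N=N_1$ versus $N=N_2$ in Lemma \ref{vec_dist} is locally constant --- it is, because the image of $f_{II}$ in $\mathcal{S}$ can only be one of the two null lines of the Lorentzian plane $\mathcal{S}$ --- and, if $M$ is only required to be umbilic or quasi-umbilic pointwise, that the two constructions match along the umbilic locus. Once $\ell$ is available the remaining steps are routine ODE integration, and the computations in the ``$\Leftarrow$'' direction are elementary; I would carry them out following \cite{b_s_3}.
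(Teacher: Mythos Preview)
Your proposal is correct and follows exactly the route the paper intends: the paper omits the proof, stating it is identical to that of \cite[Theorem~5.1]{b_s_3} via an analog of the key \cite[Lemma~5.2]{b_s_3}, and your outline reproduces precisely that argument (the distinguished null eigendirection of every $S_\nu$, the resulting straight null rulings in $\R^{2,2}$, and the transverse null curve $\gamma$). One small correction to your parenthetical in the ``$\Leftarrow$'' step: in $\R^{2,2}$ two independent lightlike vectors can be orthogonal, so independence of $\gamma'$ and $T$ alone does \emph{not} force $\la\gamma',T\ra\neq 0$; rather, this nondegeneracy is exactly the standing hypothesis that the induced metric on $M$ is Lorentzian.
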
  
This result generalizes the main result of \cite{c} to the space $\R^{2,2}.$ We omit the proof since it is identical to the proof of Theorem 5.1 in \cite{b_s_3} (note that a lemma similar to  the key lemma \cite[Lemma 5.2]{b_s_3} is also valid here). 
\begin{remark}
To our knowledge, the natural problem of the description of the Lorentzian surfaces in $\R^{2,2}$ which are umbilic at every point is an open question (note that Lorentzian umbilic surfaces in 4-dimensional Minkowski space are well-known, see e.g. \cite{Hong}). 
\end{remark}
\begin{remark}
It may occur that (\ref{inv_casi-umb}) holds, but with $\Phi_{II}\neq 0$ (Proposition \ref{imagen_fq_1}, \textit{1}-$(b)$ or \textit{2}-$(b),$ with $\Delta=0$). In that case, we have in fact  $|\vec{H}|^2=K=K_N=\Delta=0$: indeed, we are in the context of Proposition \ref{nodia} 2., with
$$M(U_{\Phi},(u_1,u_2))=\begin{pmatrix}\epsilon_1&\epsilon_2\\-\epsilon_2&-\epsilon_1\end{pmatrix},\hspace{1cm}\epsilon_1=\pm1,\ \epsilon_2=\pm1$$ 
($(u_1,u_2)$ is the positively oriented and orthonormal basis of $NM$ given by the proposition); more precisely, writing $\vec{H}:=\alpha u_1+\beta u_2$ the second fundamental form is in fact given by 	
\begin{enumerate}
	\item $\epsilon_1=-1:$ \hspace{0.2in}  $II=\begin{pmatrix}-\alpha\pm1&0 \\0 & \alpha\pm1 \end{pmatrix}u_1+\begin{pmatrix}-\beta \pm\epsilon_2& 0\\ 0& \beta\pm\epsilon_2\end{pmatrix}u_2,$ which implies that 
	$$\Delta=(\alpha-\epsilon_2\beta)^2\geq0,$$
		\item $\epsilon_1=1:$ \hspace{0.2in} $II=\begin{pmatrix}-\alpha&\mp1\\ \mp1& \alpha\end{pmatrix}u_1+\begin{pmatrix}-\beta & \pm\epsilon_2\\ \pm\epsilon_2& \beta\end{pmatrix}u_2,$ which implies that 
		$$\Delta=-(\alpha+\epsilon_2\beta)^2\leq0;$$  
	\end{enumerate}
see the table in Section \ref{section reduction quadratic map}. Since $\Delta=0,$ we deduce that $\alpha=\pm\beta$ i.e. $|\vec{H}|^2=0.$ If the Gauss map of the surface is regular, the surface belongs in fact to a degenerate hyperplane (see Theorem \ref{surfaces invariants zero} below).
\end{remark}

\subsection{Lorentzian surfaces such that $|\H|^2=K=K_N=\Delta=0$}
We describe here the Lorentzian surfaces in $\R^{2,2}$ whose classical invariants are zero. We will say that an hyperplane of $\R^{2,2}$ is \emph{degenerate} if the metric of $\R^{2,2}$ induces on it a degenerate metric. We state the main result of the section:
\begin{theorem}\label{surfaces invariants zero}
Let $M$ be an oriented Lorentzian surface  in $\R^{2,2}$ with regular Gauss map and such that $K=K_N=\Delta=0$. Then
\begin{enumerate}
\item if $\Phi_{II}\neq 0,$ $M$ belongs to a degenerate hyperplane;
\item if $\Phi_{II}\equiv 0,$ $M$ is a flat umbilic or quasi-umbilic surface.
\end{enumerate}
In both cases, we have in fact
$$|\H|^2=K=K_N=\Delta=0.$$
Conversely, if $M$ belongs to a degenerate hyperplane or is a flat umbilic or quasi-umbilic surface then $|\H|^2=K=K_N=\Delta=0.$ 
\end{theorem}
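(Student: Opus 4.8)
The plan is to prove the forward implication by distinguishing the cases $\Phi_{II}\equiv 0$ and $\Phi_{II}\not\equiv 0$, and then to verify the (easy) converse by the direct computations that already underlie (\ref{tr det phi}). In the case $\Phi_{II}\equiv 0$, formula (\ref{tr det phi}) gives at once $K_N=0$ and $|\vec H|^2=K$, while $\Delta=\det Q_{II}=\det(L_{II}^2)=0$ since $L_{II}^2$ has rank $\le 1$; together with the hypothesis $K=0$ this already yields $|\vec H|^2=K=K_N=\Delta=0$. Moreover, by Definition \ref{def quasi-umbilic}, $\Phi_{II}\equiv 0$ means precisely that $II$ is umbilic or quasi-umbilic at every point, so Theorem \ref{sup_umb_casi-umb} gives that $M$ is an umbilic or quasi-umbilic surface, and it is flat because $K=0$.

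Now suppose $\Phi_{II}\not\equiv 0$ and work on the open set $V=\{\Phi_{II}\neq 0\}$. On $V$ we have $\det\Phi_{II}=\frac14 K_N^2=0$ while $\Phi_{II}\neq 0$, so $\Phi_{II}$ has rank $1$; by the relation $\tilde\Phi_{II}(\cdot,\cdot)=\langle f_{II}(\cdot),f_{II}(\cdot)\rangle$ (Remark \ref{rmk alt arg}) this forces $\mathrm{rank}\,f_{II}=1$ with $f_{II}(N_pM)$ a non-null line of $\mathcal S$. Writing $f_{II}(\nu)=S^0_\nu=\langle\mu,\nu\rangle\, s_0$ with $s_0$ a non-null generator, one gets $\langle II^0(X),\nu\rangle=\langle\mu,\nu\rangle\langle s_0 X,X\rangle$, so $II^0$ takes values in the line $\mathcal D':=(\ker f_{II})^{\perp}=\R\mu\subset NM$; and using the canonical form of $S_\nu$ from the table in Section \ref{section reduction quadratic map}, a short computation gives $\Delta=\det Q_{II}=\pm D^2$, where $D$ is the determinant of $(\vec H,\mu)$ in a basis of $N_pM$, so $\Delta=0$ forces $\vec H\parallel\mu$, i.e. $\vec H\in\mathcal D'$. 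Hence $II$ is line-valued: $II=h\,d'$ for a smooth symmetric $2$-form $h$ on $TM$ and a local non-vanishing section $d'$ of $\mathcal D'$, with $\vec H\in\R d'$.

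The main point is to show that $\mathcal D'$ is \emph{null} and \emph{parallel} in $\R^{2,2}$. Since $K_N\equiv 0$ the normal bundle is flat, so locally there is a $\nabla'$-parallel orthonormal frame $(e_3,e_4)$ of $NM$; write $\mathcal D'=\R(e_3+\lambda e_4)$ (the exceptional direction $\R e_4$ being treated the same way) and accordingly $II=h_1 e_3+h_2 e_4$ with $h_2=\lambda h_1$. Because $(e_3,e_4)$ is parallel, the Codazzi equation of the immersion says exactly that $\nabla h_1$ and $\nabla h_2$ are Codazzi tensors on $(M,g)$; inserting $h_2=\lambda h_1$ gives $d\lambda(X)\,h_1(Y,Z)=d\lambda(Y)\,h_1(X,Z)$ for all $X,Y,Z$, so either $h_1$ is degenerate or $d\lambda=0$. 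A parallel computation of $Q_{II}$ and its trace in the frame $(e_3,e_4)$ gives $K=(1-\lambda^2)\det h_1$ up to sign, so $K=0$ forces, again, $h_1$ degenerate or $\lambda^2=1$. But $h_1$ cannot be degenerate on $V$: if $\xi$ spans its radical then $II(\xi,\cdot)=h_1(\xi,\cdot)e_3+h_2(\xi,\cdot)e_4=0$, whence $dG(\xi)=II(\xi,e_1)\wedge e_2+e_1\wedge II(\xi,e_2)=0$, contradicting the regularity of $G$ (the case $h_1\equiv 0$ being excluded since there $\Phi_{II}=0$). Thus $d\lambda=0$ and $\lambda^2=1$ on $V$, so $\lambda=\pm 1$ is locally constant; then $S_{d'}=(\lambda^2-1)U_{h_1}=0$ and $\nabla' d'=0$, so the Weingarten formula gives $\bar\nabla d'=0$, i.e. $d'$ is a constant null vector of $\R^{2,2}$. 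Hence $\langle\psi(\cdot),d'\rangle$ is constant and $M$ lies (locally, hence by connectedness globally) in the degenerate hyperplane $\{\langle x,d'\rangle=\mathrm{const}\}$; since $\vec H\in\R d'$ with $d'$ null, $|\vec H|^2=0$, and we conclude $|\vec H|^2=K=K_N=\Delta=0$.

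For the converse: a flat umbilic or quasi-umbilic surface has $f_{II}=0$ or $\mathrm{rank}\,f_{II}=1$ with lightlike image, hence $\Phi_{II}\equiv 0$, and the computation of the first case (with $K=0$) gives the four vanishings; if $M$ lies in a degenerate hyperplane $\{\langle x,v\rangle=c\}$ with $v$ null, then $v$ is a constant null normal field, so Weingarten gives $S_v=0$ and $\nabla' v=0$, whence $II$ takes values in $v^{\perp}\cap NM=\R v$, $\vec H\in\R v$ so $|\vec H|^2=0$, and $Q_{II}(\nu)$, $\Phi_{II}(\nu)$ are both proportional to $\langle v,\nu\rangle^2$, hence of rank $\le 1$; with (\ref{tr det phi}) and $|v|^2=0$ this yields $K_N=0$, $K=|\vec H|^2=0$, $\Delta=0$. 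The main obstacle is the core step of the case $\Phi_{II}\not\equiv 0$: extracting, from the flatness of the normal bundle and the Codazzi equations, that the line field $\mathcal D'$ is simultaneously null and parallel — in particular using the regularity of the Gauss map to rule out a degenerate second fundamental form; the passage from the local to the global conclusion on $M$ is a secondary point.
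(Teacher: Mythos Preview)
Your argument is correct and takes a genuinely different route from the paper's proof in the key case $\Phi_{II}\neq 0$. The paper proceeds algebraically: it first shows by contradiction, using the reduction of $U_{\Phi}$ from Propositions~\ref{diag}--\ref{nodia} together with the regularity of the Gauss map, that $U_{\Phi}$ cannot be diagonalizable (so $\vec H$ is null or zero); it then writes $II$ in the corresponding normal form, computes the quadratic form $\delta=\frac{1}{2}\,dG\wedge dG$ explicitly, finds $\delta\equiv 0$, and finally invokes Little's theorem \cite[Theorem~1.3]{L} to conclude that $M$ lies in a hyperplane, which it then checks is degenerate. Your approach is more geometric and self-contained: you observe directly that $II$ takes values in a line subbundle $\mathcal D'\subset NM$ (from $\mathrm{rank}\,f_{II}=1$ and your computation $\Delta=\pm D^2$), and then exploit the flatness of the normal bundle and the Codazzi equation to prove that this line is $\nabla'$-parallel, while $K=0$ forces it to be null; the resulting constant null normal section yields the degenerate hyperplane immediately, without passing through $\delta$ or an external reference. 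Your route makes the role of Codazzi transparent and avoids the case-by-case analysis of $U_{\Phi}$; the paper's route, by contrast, leverages and illustrates the pointwise classification of Section~\ref{section invariants}, and recovers the intermediate fact $\delta\equiv 0$, which connects the result to the asymptotic-direction framework of Section~\ref{section asymptotic directions}.
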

\begin{proof}
We assume that $M$ satisfies the hypotheses of the theorem, and that $\Phi_{II}\neq 0$ (if $\Phi_{II}\equiv 0,$ $M$ is umbilic or quasi-umbilic by the very definition). The quadratic form $\Phi_{II}$ is degenerate since $K_N=0$. We first prove by contradiction that $U_{\Phi}$ is not diagonalizable; we assume that it is diagonalizable, and consider two cases:

\emph{1- $\Phi$ has signature (1,0): } $U_{\Phi}$ is then given by (\ref{diag2}) with $a=|\H|^2, b=0$ if $|\H|^2<0,$ or $a=0, b=|\H|^2$ if $|\H|^2>0$ (note that $U_{\Phi}^0\neq 0$ since $\Phi_{II}$ is degenerate and not zero); $\beta=0$ i.e. $\H=\alpha u_1$ in the first case, and $\alpha=0$ i.e. $\H=\beta u_2$ in the second case (formulas (\ref{inva1})-(\ref{inva2})). The curvature hyperbolas are given by Proposition \ref{imagen_fq_1} 1- (a), and, in each case, the vector $0\in NM$ appears to be an extremal point of the (degenerate) hyperbola: if $u\in TM,\ |u|^2=\pm 1$ is such that $II(u,u)=0,$ we thus also have $II(u,v)=0$ for all $v\in TM,$ that is $dG(u)=0,$ a contradiction with the hypothesis that $G$ is regular. 

\emph{2- $\Phi$ has signature (0,1): } we then have  $a=|\H|^2, b=0$ if $|\H|^2>0,$ or $a=0, b=|\H|^2$ if $|\H|^2<0$ in Proposition \ref{diag}, and formulas (\ref{inva1})-(\ref{inva2}) give $\alpha^2=-|\H|^2,$ $\beta^2=0$ in the first case, and $\alpha^2=0,$ $\beta^2=|\H|^2$ in the second case; this is not possible since $\alpha^2$ and $\beta^2$ are necessarily non-negative. 

Thus $U_{\Phi}$ is not diagonalizable, and $\H$ is zero or lightlike (by conditions (\ref{cond uphi nodiag}) in Proposition \ref{nodia}). Recalling the normal forms in the table Section \ref{section reduction quadratic map}, we have
$$S_{\nu}=\langle \H,\nu\rangle\left(\begin{array}{cc}1&0\\0&1\end{array}\right)\pm\tilde\Phi_{II}(\nu_0,\nu)E_i$$ 
in some positive oriented basis $e_1,e_2$ of $TM,$ where $\nu_0$ is a vector belonging to $NM$ and $E_i=E_1$ or $E_2,$ that is
\begin{equation}\label{expr II proof}
II=\H\left(\begin{array}{rr}-1&0\\0&1\end{array}\right)\pm U_{\Phi}(\nu_0)\tilde{E}_i,
\end{equation}
where $\tilde{E}_1=\left(\begin{array}{rr}-1&0\\0&-1\end{array}\right)$ and $\tilde{E}_2=\left(\begin{array}{rr}0&-1\\-1&0\end{array}\right).$ 
Thus
$$II(e_1,e_1)=-\H-\varepsilon U_{\Phi}(\nu_0),\ II(e_2,e_2)=\H-\varepsilon U_{\Phi}(\nu_0)\hspace{.3cm}\mbox{and}\hspace{.3cm} II(e_1,e_2)=0$$
in the first case, and
$$II(e_1,e_1)=-\H,\ II(e_2,e_2)=\H\hspace{.3cm}\mbox{and}\hspace{.3cm} II(e_1,e_2)=-\varepsilon U_{\Phi}(\nu_0)$$
in the second case, where $\varepsilon=\pm 1$. Using that $dG=II(e_1,.)\wedge e_2+e_1\wedge II(e_2,.),$ we then compute the matrix of $\delta:=\frac{1}{2}dG\wedge dG$ in $e_1,e_2:$ it is of the form $\left(\begin{array}{rr}0&c\\c&0\end{array}\right)$ in the first case and $\left(\begin{array}{rr}c&0\\0&c\end{array}\right)$ in the second case. Recalling (\ref{inv delta}), we have $\det_g\delta=\Delta=0,$ from which we get $c= 0,$ that is $\delta=0$ in both cases: since $G$ is moreover assumed to be regular, the surface necessarily belongs to a hyperplane (see \cite[Theorem 1.3]{L}, in the Euclidian context). This hyperplane is degenerate: this is clear if $\H\neq 0$ since $\H$ is then a non-zero lightlike vector, normal to the surface and belonging to the hyperplane; if now $\H=0,$ then (\ref{expr II proof}) reads $II=\pm U_{\Phi}(\nu_0)\tilde{E}_i,$ which gives
$$K=-\langle II(e_1,e_1),II(e_2,e_2)\rangle+|II(e_1,e_2)|^2=\pm |U_{\Phi}(\nu_0)|^2,$$
and, since $\Phi_{II}\neq 0$ and $K=0,$ the vector $U_{\Phi}(\nu_0)$ is non-zero, lightlike, normal to the surface and necessarily belongs to the hyperplane since it is in the range of $II$; the hyperplane is thus also degenerate in that case.

The converse statement readily follows from (\ref{inv_casi-umb}).
\end{proof}
\begin{remark} 
According to Theorem \ref{surfaces invariants zero} and the previous sections, the numerical invariants of a Lorentzian surface in $\R^{2,2}$ with regular Gauss map and whose classical invariants $|\H|^2,$ $K,$ $K_N$ and $\Delta$ all vanish are the invariants given by (\ref{def invariants Phi neq 0}) if $\Phi_{II}\neq 0$ and the invariants given by (\ref{nuev_inva}) if $\Phi_{II}=0$ and $f_{II}\neq 0$ (the quasi-umbilic case); there is no invariant if $f_{II}=0$ (the umbilic case). 
\end{remark}
\begin{remark}\label{exampleinvzero}
It is straightforward to check that the quasi-umbilic surface 
\begin{equation*}
\psi(s,t)=\gamma(s)+tT(s)
\end{equation*}
with
$$\gamma(s)=(a(s),-a(s),b(s),-b(s))$$
and 
$$T(s)=(f(s),f(s),g(s),g(s)),$$
where $a,b,f$ and $g$ are real functions of the variable $s$ such that
$$a'f+b'g\neq 0,\hspace{1cm} f'g-g'f\neq 0$$
and
\begin{equation}\label{hypothesisab}
b''a'-a''b'\neq 0,
\end{equation}
is such that $|\H|^2=K=K_N=\Delta=0,$ has regular Gauss map and does not belong to any hyperplane. If we assume that 
$$(b''a'-a''b')(s_0)=0\hspace{.5cm}\mbox{and}\hspace{.5cm}(b''a'-a''b')(s)\neq 0\ \mbox{for}\ s\neq s_0$$
instead of (\ref{hypothesisab}), we obtain a surface such that $|\H|^2=K=K_N=\Delta=0$ and with regular Gauss map, which is umbilic at $\psi(s_0,0)$ and quasi-umbilic at $\psi(s,t),$ $s\neq s_0.$
\end{remark}
\noindent
\textbf{Acknowledgement:} Sections 1, 2, 3 and 4.1 of this paper is part of V. Patty's PhD thesis; he thanks CONACYT for support.
The third author was partially supported by PAPIIT-DGAPA-UNAM grant No. 117714.

\end{document}